\newtheorem{thm}{Theorem}[section]
\newtheorem{lemma}[thm]{Lemma} \newtheorem{cor}[thm]{Corollary}
\newtheorem{prop}[thm]{Proposition}
\theoremstyle{definition}
\newtheorem{defn}[thm]{Definition}
\newtheorem{question}[thm]{Question}
\newtheorem{conj}[thm]{Conjecture}
\newtheorem{example}[thm]{Example}
\newtheorem{remark}[thm]{Remark}
\newcommand{\A}{\mathbb{A}}
\newcommand{\Z}{\mathbb Z}
\newcommand{\R}{\mathbb R}
\newcommand{\C}{\mathbb C}
\newcommand{\Q}{\mathbb Q}
\newcommand{\PP}{\mathbb{P}}
\DeclareMathOperator{\charac}{char}
\DeclareMathOperator{\Id}{Id}
\DeclareMathOperator{\Spec}{\text{Spec}}
\DeclareMathOperator{\Sym}{Sym}
\DeclareMathOperator{\ord}{ord}
\newcommand{\abs}[1]{\left|#1\right|}
\newcommand{\mc}{\mathcal}
\DeclareMathOperator{\Top}{top}
\DeclareMathOperator{\dv}{div}
\DeclareMathOperator{\Ind}{Ind}
\DeclareMathOperator{\Exc}{Exc}
\DeclareMathOperator{\adj}{adj}
\DeclareMathOperator{\Num}{Num}
\DeclareMathOperator{\rad}{rad}
\DeclareMathOperator{\Glu}{Glu}
\newcommand\norm[1]{\lVert#1\rVert}
\DeclareMathOperator{\str}{str}
\DeclareMathOperator{\tot}{tot}
\keywords{billiards, algebraic dynamics, dynamical degree, entropy}
\subjclass[2020]{Primary: 37C83; Secondary: 37P05, 37F80, 37B40}
\title[The dynamical degree of billiards]{The dynamical degree of billiards in an algebraic curve}
\author{Max Weinreich}
\email{maxhweinreich@gmail.com}
\address{Department of Mathematics, Harvard University, Cambridge, MA 02138\\
 ORCID: 0000-0002-0103-2245}
\date{}                     
\begin{document}
 
\maketitle

\begin{abstract}
We introduce an algebraic formulation of billiards on plane curves over algebraically closed fields, extending Glutsyuk's complex billiards. Given an algebraic plane curve $C$ of degree $d \geq 2$, algebraic billiards is a rational $(d-1)$-to-$(d-1)$ surface correspondence on the space of unit tangent vectors based on $C$. We prove that the dynamical degree of the billiards correspondence is at most an explicit cubic algebraic integer $\rho_d < 2d^2 - d - 3$, depending on the degree $d$ of $C$. As a corollary, for smooth real algebraic curves, the topological entropy of the classical billiards map is at most $\log \rho_d$. We further show that the billiards correspondence satisfies the singularity confinement property and preserves a natural $2$-form. To prove our bounds, we construct a birational model that partially resolves the indeterminacy of algebraic billiards.
\end{abstract}

\section{Introduction} \label{sect_intro}

Mathematical billiards is a dynamical system that models a point particle bouncing around a space \cite{MR2168892, MR3388585}. 
It is often studied as a flow, but in this article, we consider the discrete-time version, the \emph{billiards map}. 
Identify the circle $S^1$ with the space of unit tangent vectors in $\R^2$, and fix a curve $C_0$ in $\R^2$, the \emph{table}. Assume the table $C_0$ is a piecewise-smooth curve bounding a compact, connected region $\Omega$. The billiards map $b_\Omega$ in $\Omega$ is a partially-defined self-map of the subset of $C_0 \times S^1$ consisting of inward-facing unit vectors. Given a vector based at $c$ with direction $q$, we define
$$b_\Omega(c, q) = (c', q'),$$ 
where $c'$ is the first intersection point with $C_0$ of the open ray starting at $c$ of direction $q$, and $q'$ is the reflection of direction $q$ across the tangent line $T_{c'} C_0$, if $c'$ is nonsingular.

Billiards in a complex plane curve $C_0 \subseteq \PP^2_\C$ was introduced by Glutsyuk as a \emph{correspondence}, or meromorphic multi-valued map, on $C_0 \times \PP^1$ \cite{MR3224419}. (The map is multi-valued because intersecting a line with an algebraic curve requires taking a root; see Figure \ref{fig_alg_bill}.) 
Complex billiards has been used to prove a surprising array of results about real billiards that are not evident from real geometry alone \cite{MR4238126, MR4320512, MR3236494, MR3606550, MR4210728, MR3342645}. 

The main goal of this paper is to introduce a purely algebro-geometric formulation of billiards for algebraic plane curves $C_0$ over algebraically closed fields. We prove upper bounds on the dynamical degree,  topological entropy, and orbit growth of algebraic billiards, depending on the degree of the curve. The applications over $\R$ are explained in Section \ref{sect_applications}.

\begin{defn}
    Let $X$ be an irreducible projective variety  over an algebraically closed field $k$. A \emph{dominant rational correspondence} $f = (\Gamma_f, \pi_1, \pi_2)$, also written $f: X \vdash X$, is an effective algebraic cycle $\Gamma_f \neq 0$ in $X \times X$,
    equipped with the projection maps inherited from the product,
    $$\pi_1, \pi_2 : (X \times X)|_{\Gamma_f} \to X.$$
    We require that each component $\Gamma$ of $\Gamma_f$ satisfies $\dim \Gamma = \dim X$, and that on each component $\Gamma$, the maps $\pi_1, \pi_2 : \Gamma \to X$ are dominant and generically finite. We say $f$ is \emph{$m$-to-$n$} if the topological degrees of $\pi_1$ and $\pi_2$ are $n$ and $m$, respectively. Given $x \in X$, the \emph{image} $f(x)$ is the set $f(x) = \pi_2( \pi_1^{-1}(x))$.
\end{defn}

Following the idea of complex billiards, we define algebraic billiards as a rational correspondence. However, our setup is slightly different from previous works on complex billiards such as \cite{MR3224419}. In other works, the role of the ``space of directions'' is played by $\PP^1$, the projectivized tangent space of $\A^2$ at the origin. We instead work with the space of ``unit tangent vectors'', which is naturally a ramified double cover of the projectivized tangent space. This will allow us to mimic certain properties of the classical billiard map more closely.

\begin{defn} \label{def_intro_b}
Let $\Omega \subset \R^2$ be a region such that the Zariski closure of $\partial \Omega \subset \R^2$ in $\PP^2_\C$ is a complex algebraic curve $C$, assumed smooth (thus irreducible) for simplicity. Let $D$ be the complex conic defined as the Zariski closure in $\PP^2_\C$ of 
$$S^1 \colonequals \{ (q_0, q_1) :  q_0^2 + q_1^2 = 1\} \subset \R^2.$$
Let $b_\Omega$ be the real billiards map inside $\Omega$. Its graph is a subset of $(\partial \Omega \times S^1)^2$. The \emph{algebraic billiards correspondence}
$$b: C \times D \vdash C \times D$$
is defined, via its graph $\Gamma_b$, as the Zariski closure of the graph of $b_\Omega$ in $(C \times D)^2$. It is a $(d-1)$-to-$(d-1)$ rational correspondence, where $d = \deg C$ (Proposition \ref{prop_b_basic}).

More generally, given an algebraically closed field $k$ of characteristic not equal to $2$ and a general (for instance, irreducible) curve $C \subset \PP^2_k$ of degree $d \geq 2$, we define an \emph{algebraic billiards correspondence} $b : C \times D \vdash C \times D$ (Definition \ref{def_b}). Note that $b$ depends on the particular embedding of $C$ in $\PP^2_k$; for instance, there is no conjugacy between the billiard correspondences associated to a real circle and a real ellipse.
\end{defn}

The most fundamental algebraic-dynamical invariant of a rational surface correspondence is its \emph{first dynamical degree}, denoted $\lambda_1$, which measures the degree growth of the iterates \cite{MR3667901, MR4048444}.

\begin{defn}[\cite{MR4048444}] \label{def_intro_dd}
     Let $f: X \vdash X$ be a rational correspondence $(\Gamma_f, \pi_1, \pi_2)$ on a smooth projective surface $X$ over an algebraically closed field $k$. Given a divisor $\Delta$, the pushforward $f_* \Delta$ is a divisor with support $f(\Delta)$ that counts components with multiplicity; see Section \ref{sect_prelim}.
     The \emph{degree} of $f$ relative to a fixed ample divisor $\Delta$ is defined by the intersection formula
     $$\deg_\Delta (f) \colonequals f_* \Delta \cdot \Delta.$$
     The \emph{(first) dynamical degree of $f$} is the limit
    \begin{equation} \label{eq_dd_intro}
         \lambda_1(f) \colonequals \lim_{m \to \infty} \deg_\Delta (f^m)^{1/m}.
    \end{equation}
     The limit \eqref{eq_dd_intro} exists, is independent of the choice of divisor $\Delta$, and is a birational conjugacy invariant of $f$.
\end{defn}

\begin{figure}
\includegraphics[width=2.7in]{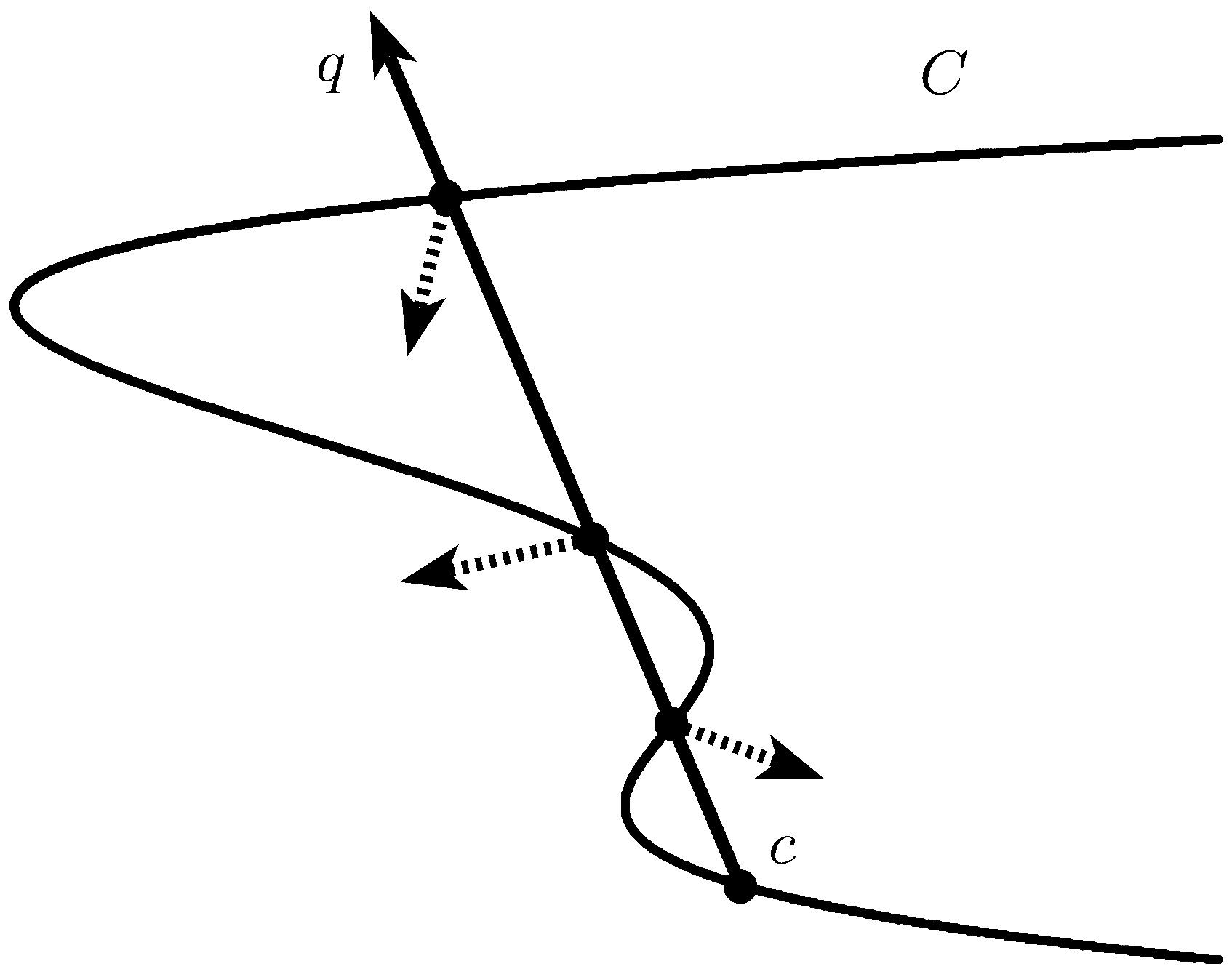}
% \psset{xunit=0.7cm,yunit=0.7cm,algebraic=true,dimen=middle,dotstyle=o,dotsize=5pt 0,linewidth=2pt,arrowsize=3pt 2,arrowinset=0.25}
% \begin{pspicture*}(-5,-1.5)(4,6.2)
% \psplotImp[linewidth=1.4pt,stepFactor=0.2](-5.1,-1.5)(4,6.1){10*y^1-17*y^2+8*y^3-1*y^4+5*x^1}
% \psline[linewidth=2pt]{->}(0.49843930399499786,-0.1854984490928731)(-2.2101267749034914,6.209794890157194)
% \psline[linewidth=2pt,linestyle=dashed,dash=1pt 1pt]{->}(-1.6483261425716211,4.84442285527758)(-1.9336801680745621,3.672627150711136)
% \psline[linewidth=2pt,linestyle=dashed,dash=1pt 1pt]{->}(-0.5842070140503992,2.347997501380424)(-1.9968094560627112,2.031265663019263)
% \psline[linewidth=2pt,linestyle=dashed,dash=1pt 1pt]{->}(-0.0037021094562466215,0.9953776063840409)(1.1123079773536326,0.5950743612888743)
% \begin{scriptsize}
% \psdots[dotstyle=*](0.49843930399499786,-0.1854984490928731)
% \rput[bl](0.7,-0.02043619659557803){$c$}
% \rput[bl](-2.6,5.6){$q$}
% \rput[bl](1.8,5.6){$C$}
% \psdots[dotstyle=*](-0.0037021094562466215,0.9953776063840409)
% \psdots[dotstyle=*](-1.6483261425716211,4.84442285527758)
% \psdots[dotstyle=*](-0.5842070140503992,2.347997501380424)
% \end{scriptsize}
% \end{pspicture*}
\caption{Over algebraically closed fields such as $\C$, the billiards map is multivalued. Since non-real intersections of the ray with $C$ cannot be drawn, we show here a real point $c$ of $C$ and a real direction $q$ such that the ray has only real intersection points with $C$. Note that the notion of ``first point of intersection'' of a ray with $C$ does not extend to general $(c, q)$.}
\label{fig_alg_bill}
\end{figure}

Dynamical degrees are very difficult to compute in general; even in the context of rational maps, they can be computed only in low dimensions or in the presence of rich geometric structure such as a group law \cite{MR1867314, MR4276288, MR4363581}. Correspondences beyond single-valued maps present further difficulties. So far, their dynamical degrees have been computed in just two families, monomial correspondences and Hurwitz correspondences \cite{MR4266360, MR3725886, MR4108910}. 

Our first main result is an upper bound on the first dynamical degree $\lambda_1(b)$ of billiards on algebraic curves of a given degree. 

\begin{thm} \label{thm_main}
    Let $k$ be an algebraically closed field of characteristic not equal to $2$. Let $d \geq 2$, and let $C \subset \PP^2_k$ be a smooth algebraic curve of degree $d$. There exists an algebraic integer $\rho_d$ of degree at most $3$, independent of $C$, such that the billiards correspondence
    $b: C \times D \vdash C \times D$
    has first dynamical degree
    \begin{equation}
    \lambda_1(b) \leq \rho_d < 2d^2 - d - 3. 
    \label{eq_main_bound}
    \end{equation}
\end{thm}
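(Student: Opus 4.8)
The plan is to bound $\lambda_1(b)$ by the spectral radius of the pushforward action $b_*$ on the numerical group $\Num$ of a well-chosen smooth projective model. Since the pushforward of a correspondence is submultiplicative on nef classes and $\lambda_1$ is a birational invariant, we get $\lambda_1(b) = \lambda_1(\tilde b) \leq \rho(\tilde b_*)$ for \emph{any} smooth $\tilde X$ birational to $C \times D$; crucially, this inequality needs no algebraic stability, only a good model. The art is to choose $\tilde X$ small enough to compute on, yet resolved enough that the resulting bound is the sharp cubic $\rho_d$ rather than a coarser quantity.

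First I would work on the naive model $X = C \times D$. Since $D \cong \PP^1$, the group $\Num(X)$ is the rank-two lattice generated by the fiber classes $F_C = \{c\} \times D$ and $F_D = C \times \{q\}$, with hyperbolic intersection form. I would compute the entries of $b_*$ on this basis as incidence counts on the graph $\Gamma_b$, via the projection formula $b_* \Delta \cdot \Delta' = \pi_1^* \Delta \cdot \pi_2^* \Delta'$. The four numbers are governed by: the shooting degree (a line through a fixed point meets $C$ in $d-1$ further points), the fact that a unique trajectory connects two generic points, and two reflection counts dictated by the algebraic reflection law---the involution of $D$ fixing the tangent and normal directions and swapping the circular points $[1 : \pm i : 0]$---whose degrees are controlled by the class $d(d-1)$ of $C$ and the degree $d-1$ of projection from a point of $C$. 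This yields an explicit $2 \times 2$ matrix and a first, coarse bound.

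The coarse bound is not sharp because $b$ has base points: the correspondence is indeterminate where the shooting line becomes tangent to $C$ or passes through the circular points, and where the reflection law degenerates (the incoming ray isotropic, i.e.\ directed at $[1 : \pm i : 0]$). I would therefore construct $\tilde X \to C \times D$ by blowing up this exceptional locus---the birational model that partially resolves the indeterminacy of algebraic billiards---introducing exceptional classes into $\Num(\tilde X)$. The key computation is the restriction of $\tilde b_*$ to a rank-three $\tilde b_*$-invariant sublattice spanned by (the strict transforms of) $F_C$, $F_D$, and the relevant exceptional class, tracking where each exceptional divisor is sent and with what multiplicity under the now partially-resolved correspondence. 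The resulting $3 \times 3$ integer matrix $M_d$ has entries polynomial in $d$ and independent of $C_0$; its characteristic polynomial $\chi_d$ is cubic, so its largest root $\rho_d = \rho(\tilde b_*)$ is an algebraic integer of degree at most $3$, and $\lambda_1(b) \leq \rho_d$.

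Finally I would analyze the explicit cubic $\chi_d$ to prove $\rho_d < 2d^2 - d - 3$ for all $d \geq 2$: it suffices to check that $\chi_d$ has positive leading coefficient, that $\chi_d(2d^2 - d - 3) > 0$, and that $2d^2 - d - 3$ lies to the right of every real root, e.g.\ by sign analysis of $\chi_d$ and $\chi_d'$ or a strict Perron--Frobenius eigenvector estimate. The step I expect to be the main obstacle is the construction and bookkeeping of the birational model: enumerating every indeterminacy point---especially the degenerate contributions at the circular points and at the tangency and flex points of $C$---determining the image of each exceptional divisor under the multivalued $\tilde b$, and verifying that all these local contributions are uniform in $C_0$, so that $M_d$, and hence $\rho_d$, truly depends only on $d$. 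Getting this exceptional bookkeeping exactly right is precisely what separates the sharp cubic bound from the coarse quadratic one.
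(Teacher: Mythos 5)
Your overall strategy is the paper's: compute the cheap $2\times 2$ bound on $C\times D$, then blow up the scratch points (tangencies at infinity and isotropic tangencies) to get a modified phase space, compute the pushforward on $\Num^1$ of that model, and bound $\lambda_1(b)$ by the spectral radius via $\lambda_1(f)\leq\rad f_*$. However, two concrete points need repair. First, there is no rank-three $\hat{b}_*$-invariant sublattice spanned by $C^{(0)}$, $D^{(0)}$, and a single exceptional class. In the paper's computation, $\hat{b}_*C^{(0)}$ has a nonzero coefficient on \emph{every} isotropic exceptional class $E_j^{(\pm i)}$, and $\hat{b}_*D^{(0)}$ additionally hits every $E_j^{(\infty)}$; the smallest invariant sublattice containing the fiber classes is rank $4$, spanned by $C^{(0)}$, $D^{(0)}$, $\sum_j E_j^{(\infty)}$, and $\sum_j\bigl(E_j^{(i)}+E_j^{(-i)}\bigr)$. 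Its characteristic polynomial is $(\lambda-(d-1))\Phi_d(\lambda)$ with $\Phi_d$ cubic, and the remaining $2d^2-2$ eigenvalues of the full $(2d^2+2)\times(2d^2+2)$ matrix are all $-1$. So the degree-$3$ claim for $\rho_d$ comes from a factorization of a quartic, found in the paper by an explicit ad hoc conjugation, not from a rank-three invariant lattice; your plan as stated would not produce a closed invariant subspace and the $3\times 3$ matrix $M_d$ you describe does not exist.

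Second, you correctly flag uniformity in $C_0$ as the main obstacle, but your plan has no mechanism to handle non-general nondegenerate curves, for which the scratch-point count and the local analysis at blown-up points genuinely change (singular points, isotropic tangencies at infinity, non-simple tangencies). The paper resolves this not by making the bookkeeping uniform but by proving the bound only for curves satisfying explicit generality hypotheses and then invoking the fact that dynamical degrees cannot increase under specialization, applied to the generic geometric fiber of the family of degree-$d$ curves. Without that specialization step (or an equivalent), your argument only covers general $C_0$, not the full statement. A smaller omission: to multiply the secant and reflection matrices on the blown-up model you must verify the functoriality hypothesis that no contraction of $\hat{s}$ lands in $\Ind\hat{r}$, which is exactly where the generality assumptions enter again.
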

Explicitly, the constant $\rho_d$ is the largest root of the polynomial
$$\Phi_d(\lambda) \colonequals \lambda^3 - (2 d^2 - d - 3) \lambda^2 + (2d^2 - 4d + 3) \lambda - (d - 1).$$

One can prove an upper bound on the first dynamical degree $\lambda_1(b)$ of roughly $2d^2$ just by computing the bidegree of $b$; we call this the \emph{cheap bound} (Theorem \ref{thm_cheap_upper_bound}). To get the stronger result of Theorem \ref{thm_main}, we study the orbits of contracted curves.

A \emph{destabilizing orbit} of a correspondence is a contracted curve with an iterate in the indeterminacy locus. A correspondence is \emph{algebraically stable} if it has no destabilizing orbits. In the presence of algebraic stability, the dynamical degree can be computed exactly. 

However, it turns out that billiards has a geometric property called \emph{singularity confinement} that is a kind of opposite to algebraic stability. Singularity confinement is associated with integrable systems and maps with slow degree growth \cite{MR1704282, MR2036429, MR4711236}. Roughly put, a rational correspondence $f$ has the singularity confinement property if the contracted curves of $f$ are not contracted by some higher iterate $f^m$; in other words, these curves ``re-appear'' upon iteration.

\begin{thm} \label{thm_main_sc}
    Let $\charac k \neq 2$, and let $C \subset \PP^2_k$ be a general plane curve of degree $d \geq 2$. The billiards correspondence $b$ on $C \times D$ has the singularity confinement property (Definition \ref{def_sing_conf}).
\end{thm}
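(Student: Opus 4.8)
The plan is to factor the billiards correspondence as $b = \rho \circ \tau$, where $\tau$ is the \emph{chord correspondence} sending a state $(c,q)$ to the $d-1$ residual intersections $c'$ of the line through $c$ of direction $q$ with $C$ (keeping the direction $q$ fixed), and $\rho$ is the \emph{reflection involution} sending $(c',q)$ to $(c',q^*)$, where $q^*$ is the reflection of $q$ across the tangent line $T_{c'}C$. For fixed $c'$, the reflection $\rho$ acts on $D \cong \PP^1$ as a projective involution whose two fixed points are the directions parallel and perpendicular to $T_{c'}C$. Since a null direction is its own perpendicular, these two fixed points collide precisely when $T_{c'}C$ is an isotropic line, that is, when $c'$ is one of the finitely many points $f_1,\dots,f_N$ at which the tangent line passes through a circular point $[1:\pm i:0]$. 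At such a point the involution degenerates to the constant map collapsing $D$ to the isotropic direction, so $\rho$ contracts each fiber $\{f_j\}\times D$ to a single point $(f_j,[1:\pm i:0])$.

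First I would use this to enumerate the contracted branches of $b$. Because $\rho$ is a birational involution, the only way a branch of $b=\rho\circ\tau$ can collapse a curve is for the chord correspondence to carry that curve onto a contracted fiber $\{f_j\}\times D$ of $\rho$. The locus $\tau^{-1}(\{f_j\}\times D)$ is the curve $E_j\colonequals\{(c,\mathrm{dir}\,\overline{cf_j}) : c\in C\}$ of states whose ray points at $f_j$, and the corresponding branch of $b$ sends $E_j$ to the single point $p_j=(f_j,[1:\pm i:0])$. For a general curve $C$ these $E_j$ (together with their counterparts for the two circular points) are the only contracted branches: the remaining indeterminacies of $\tau$ lie along the tangency curve $\{(c,T_cC):c\in C\}$ and at the isolated configurations coming from flexes and bitangents, none of which produces a one-parameter family collapsing to a point once $\rho$ is applied.

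Next I would show that each contracted branch un-contracts after a single further step, which is exactly the content of singularity confinement. The key observation is that the target point $p_j=(f_j,[1:\pm i:0])$ lies on the tangency curve, since the isotropic tangent direction at $f_j$ equals the circular point; hence $p_j$ is an indeterminacy point of $\tau$, and this is where the partial resolution $\tilde X\to C\times D$ of the paper does the work. On $\tilde X$ the exceptional divisor over $p_j$ records the second-order contact data of rays approaching $f_j$ tangentially, and the lift $\tilde b$ maps this exceptional curve onto a one-dimensional family of states rather than to a point; equivalently, under the time-reversal symmetry $b\leftrightarrow b^{-1}$ the point $p_j$ is an indeterminacy point of $b$ matched to a contracted branch of $b^{-1}$. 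Consequently the branch that collapses $E_j$ to $p_j$ produces a curve at the next step, so $E_j$ is contracted by $b$ but not by $b^2$ along that branch. Running this argument for every contracted branch shows that no curve is contracted by every iterate, which is the singularity confinement property of Definition \ref{def_sing_conf}.

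The main obstacle is the local computation at the resolved point $p_j$: one must verify that the exceptional divisor over $p_j$ genuinely maps to a $1$-dimensional locus under $\tilde b$, rather than being contracted a second time, and that this holds uniformly for all $j$ and for both circular points. This demands an explicit coordinate analysis in the partial resolution, tracking how a tangential isotropic ray reflects, and it is here that the hypothesis that $C$ is general becomes essential: genericity guarantees that the $f_j$ are distinct from the flexes and from the base points of the bitangents, that each isotropic tangent line is tangent at a single point and meets $C$ transversally elsewhere, and that no unexpected coincidence forces a second collapse. Granting this local picture, the enumeration of contracted branches together with their one-step resolution yields the theorem.
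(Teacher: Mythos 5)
Your factorization $b = \rho\circ\tau$ matches the paper's $b = r\circ s$, and your treatment of the isotropic tangency points $f_j$ is the right idea for that half of the problem. But your enumeration of contracted branches has a genuine gap: you assert that the only way a branch of $b=\rho\circ\tau$ can collapse a curve is for $\tau$ to carry it onto a contracted fiber of $\rho$, which tacitly assumes $\tau$ itself has no contractions. It does. Every line of direction $[q]$ passes through the point $[Q_0:Q_1:0]$ on the line at infinity; when that point is one of the $d$ points of $C\cap L_\infty$, it is one of the $d-1$ residual intersections for \emph{every} base point $c$, so the branch of $\tau$ selecting it contracts the entire curve $C\times q$ to the single state $(c_\infty,q)$ (Proposition \ref{prop_basic_s}). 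Since $r$ fixes such a state, these descend to $2d$ contractions of $b$ that your argument never sees, and confining them requires a separate local analysis at infinity (Lemmas \ref{lemma_secant_at_infty} and \ref{lemma_reflect_infty}, and Proposition \ref{prop_sc_s}) with a different geometric picture: the exceptional divisor over $(c_\infty,q)$ is identified with the pencil of lines through $c_\infty$, not with contact data at an isotropic tangency. Relatedly, your claim that the remaining indeterminacies of $\tau$ lie along the tangency curve $\{(c,T_cC)\}$ is incorrect --- the secant correspondence is well defined there --- and its actual indeterminacy locus is exactly the set of scratch points at infinity.

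A second problem: in the confinement step you argue that because $p_j$ is an indeterminacy point of the next map, the branch collapsing $E_j$ ``consequently'' produces a curve at the following step. That inference is invalid in general --- a contracted curve can land on an indeterminacy point and still be contracted by the square of the map (Example \ref{ex_unstable_and_unconfined} is exactly such a case) --- so the local computation you defer is not a routine verification but the entire content of the theorem. The paper carries it out by blowing up each scratch point, proving that the strict transforms of $s$ and $r$ send the exceptional divisor to a curve or to itself (Lemmas \ref{lemma_secant_at_infty}--\ref{lemma_reflect_infty}), and then using a fiber-product bookkeeping of partial orbits to show that the relevant branch of $b^2$ has one-dimensional image in both factors. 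Your proposal would need both the missing class of contractions and this computation to become a proof.
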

Theorem \ref{thm_main_sc} shows that all contracted curves of billiards contribute destabilizing orbits. This leads us to introduce the \emph{modified phase space}, a birational model $P$ for $C \times D$ defined by blowing up all indeterminacy points. The constant $\rho_d$ in Theorem \ref{thm_main} is the spectral radius of the action of $b$ on the group $\Num P \simeq \Z^{2d^2+2}$ of divisor classes on $P$ up to numerical equivalence. The chief difficulty in the proof lies in computing this action.

Since destabilizing orbits are the sole obstruction to computing the dynamical degree precisely, one may ask whether our modified phase space $P$ allows for the computation of $\lambda_1(b)$ rather than just an upper bound. While $P$ resolves all the destabilizing orbits of the correspondence $b$ on $C \times D$, but it often happens that resolving one destabilizing orbit introduces others. We conjecture that this does not happen.

\begin{conj} \label{conj_main}
If $C$ is very general, then the modified phase space $P$ is an algebraically stable model for the billiards correspondence $b$, hence 
$$\lambda_1(b) = \rho_d.$$
\end{conj}

For heuristic evidence towards Conjecture \ref{conj_main}, see Remark \ref{rem_algstab}. In a sequel, we obtain lower bounds on $\lambda_1(b)$ for very general curves under some mild conditions on characteristic \cite{billII}.

We mention one further result that may be useful in future studies of billiards. Real billiards is the prototypical example of a symplectomorphism, that is, a mapping that preserves a nondegenerate alternating $2$-form $\omega$. We generalize the existence of an invariant $2$-form $\omega$ to algebraic billiards and we calculate its divisor (Proposition \ref{prop_div}). 

\begin{thm} \label{thm_main_invariant_form}
    Let $\charac k \neq 2$, and let $b: C \times D \vdash C \times D$ be the billiards correspondence on a curve $C \subset \PP^2_k$ of degree $d \geq 2$. Then $b$ possesses an invariant rational $2$-form $\omega$.
\end{thm}

To explain the origin of the invariant form, note that the domain of the classical billiards map is a subset of the space of unit tangent vectors based on a curve $\partial \Omega$ in $\R^2$. Since reflection is defined with respect to the Euclidean quadratic form, there is a canonical identification of $T \R^2$ with $T^* \R^2$, allowing us to embed the domain as $\partial \Omega \times S^1 \hookrightarrow T^* \R^2$. In the algebraic billiards setting, we similarly have an embedding of a Zariski dense subset of $C \times D$ in $T \A^2$, and via a choice of quadratic form, in $T^* \A^2$. The latter space $T^* \A^2$ has a canonical nondegenerate $2$-form, and the content of Theorem \ref{thm_main_invariant_form} is that the latter form is $b$-invariant.

\begin{remark}
    As mentioned, the dynamical system that Glutsyuk introduced as \emph{complex billiards} in \cite{MR3224419} is a meromorphic correspondence
    $$b_{\Glu} : C \times \PP^1 \vdash C \times \PP^1,$$
    where $\PP^1$ is naturally the projectivized tangent space $\PP(T_0\A^2)$ of $\A^2$ at the origin. In contrast, our space of directions $D$ is naturally the compactification of the conic in $T_0 \A^2$ consisting of tangent vectors $(q_0, q_1)$ such that $q_0^2 + q_1^2 = 1$. While $D$ and $\PP^1$ are abstractly isomorphic, the correspondences $b$ and $b_{\Glu}$ are not conjugate. Rather, there is a $2$-to-$1$ semiconjugacy from $b$ to $b_{\Glu}$ defined by sending a unit tangent vector to its projective class; see Proposition \ref{prop_b_basic}.
    
    By the theory of relative dynamical degrees \cite{MR4048444}, Theorem \ref{thm_main} holds verbatim for $b_{\Glu}$. The singularity confinement property is also preserved by finite semiconjugacy, so Theorem \ref{thm_main_sc} holds for $b_{\Glu}$ as well. However, $b_{\Glu}$ admits no invariant $2$-form (Remark \ref{rem_no_invt_form_on_CP1}). This was our original motivation for introducing algebraic billiards as a correspondence on $C \times D$.
    
    It turns out that $C \times D$ has another advantage, which is that the definitions of the secant and reflection correspondences (that is, intersecting a line with a curve and reflecting it across the tangent) become almost symmetric. This lends an interesting near-duality to most of our definitions and proofs.
\end{remark}

\begin{question} We pose some questions for future research.
\begin{enumerate}
    \item Is the dynamical degree bound of Theorem \ref{thm_main} sharp? 
    \item How does Theorem \ref{thm_main} change for special families of curves, such as reducible curves and singular curves?
    \item Are there any real billiard tables such that the topological entropy over $\R$ and $\C$ agree? There are similarities between our results and those for dynamics on Markoff surfaces, particularly the existence of an invariant $2$-form and a nice model \cite{MR2553877}. On the other hand, the real orbit is just one branch of exponentially many of the complex orbit, so it would be surprising indeed if the two entropies ever agree.
    \item Algebraically stable models for maps with invariant $2$-forms were studied by Diller-Lin \cite{MR3451389}. Can their results be adapted to correspondences such as billiards?
\end{enumerate}
\end{question}

\subsection{Applications to real billiards} \label{sect_applications}
Dynamical degrees play a central role in the arithmetic, ergodic, and entropic properties of multivariate dynamical systems \cite{MR2180409,MR4048444,MR4007163}. For the billiards correspondence, the first dynamical degree $\lambda_1$ is an upper bound for topological entropy, via a comparison theorem of Dinh-Sibony \cite{MR2391122}. Thus our techniques lead to upper bounds on the topological entropy of billiards in smooth algebraic plane curves.

\begin{cor} \label{cor_top}
Let $\Omega \subset \R^2$ be a region such that the Zariski closure of $\partial \Omega$ in $\PP^2_\C$ is an smooth algebraic curve $C$ of degree $d \geq 2$. Then the billiards map $b_\Omega$ inside $\Omega$ satisfies
$$h_{\Top}(b_\Omega) \leq \log \rho_d < \log (2d^2 - d - 3).$$
\end{cor}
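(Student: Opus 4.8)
The plan is to derive the entropy bound from Theorem~\ref{thm_main} by comparing the real billiards dynamics with the complex billiards correspondence and then invoking the Dinh--Sibony entropy inequality \cite{MR2391122}. First I would verify that the hypotheses of Theorem~\ref{thm_main} are actually met in the setting of the corollary. Since $C_0$ is irreducible of degree $d \geq 2$ and defined over $\R \subset \C$, it contains no line as a component, hence no multiple lines and no lines through $[1 : \pm i : 0]$; separability is automatic in characteristic $0$. Thus $C_0$ is nondegenerate in the sense of Definition~\ref{def_intro_b}, and Theorem~\ref{thm_main} applies to the complex correspondence $b : C \times D \vdash C \times D$, giving $\lambda_1(b) \leq \rho_d < 2d^2 - d - 3$.

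Next I would realize the real billiards map as a subsystem of the complex correspondence $b$. The compact phase space $M$ of $b_\Omega$ (the inward-facing unit vectors based on $\partial\Omega$) embeds into the real locus $(C \times D)(\R) \subset (C \times D)(\C)$, away from the finite set of singular points of $C_0$ resolved by the desingularization $C \to C_0$, which is an isomorphism off a finite set and therefore entropy-irrelevant. By the construction of the graph in Definition~\ref{def_intro_b}, the graph of $b_\Omega$ is contained in $\Gamma_b(\R)$, so every forward orbit $(x_n)$ of $b_\Omega$ is an orbit of $b$ in the Dinh--Sibony sense, that is $(x_n, x_{n+1}) \in \Gamma_b$ for all $n$. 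Fixing a Kähler metric on $(C \times D)(\C)$ and restricting it to $M$, any $(n,\varepsilon)$-separated set for the Bowen metric of $b_\Omega$ is a fortiori an $(n,\varepsilon)$-separated family of orbit segments of $b$; hence $h_{\Top}(b_\Omega) \leq h_{\Top}(b)$, where the right-hand side is the topological entropy of the complex correspondence.

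I would then apply the Dinh--Sibony theorem: on the compact Kähler surface $C \times D$ the entropy of the meromorphic correspondence $b$ obeys $h_{\Top}(b) \leq \log \max_{0 \le p \le 2} \lambda_p(b)$. Because $b$ is $(d-1)$-to-$(d-1)$, its extreme dynamical degrees are $\lambda_0(b) = \lambda_2(b) = d-1$, while the log-concavity (mixed-degree) inequalities give $\lambda_1(b)^2 \geq \lambda_0(b)\,\lambda_2(b) = (d-1)^2$, so $\lambda_1(b) \geq d-1$ and the maximum is attained at $p=1$. Combining this with Theorem~\ref{thm_main} yields
$$h_{\Top}(b_\Omega) \leq h_{\Top}(b) \leq \log \lambda_1(b) \leq \log \rho_d < \log(2d^2 - d - 3),$$
which is the assertion.

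The hard part will be making the second step fully rigorous: relating the entropy of the real map—only partially defined and discontinuous along the tangency locus, the corner points, and the points sent to the line at infinity—to the orbit-entropy of the complex correspondence. One must confirm that these discontinuity and indeterminacy loci, together with the singular points resolved by $C \to C_0$, lie in a set whose contribution to separated-orbit counts is negligible, for instance by restricting to the invariant full-measure set on which all iterates are defined and continuous, so that the clean Bowen inequality $h_{\Top}(b_\Omega) \le h_{\Top}(b)$ survives. The Dinh--Sibony volume estimates for the iterated graphs $\Gamma_{b^n}$ should absorb these lower-dimensional loci, but the bookkeeping between the real metric geometry on $M$ and the intersection-theoretic growth governing $\lambda_1(b)$ is the delicate point.
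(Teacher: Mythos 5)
Your proposal is correct and follows essentially the same route as the paper: the graph inclusion $\Gamma_{b_\Omega} \subset \Gamma_b$ from Proposition \ref{prop_b_basic} gives $h_{\Top}(b_\Omega) \leq h_{\Top}(b)$, and the Dinh--Sibony inequality combined with Theorem \ref{thm_main} finishes the argument. Your extra observation that the maximum over dynamical degrees is attained at $p=1$ (via log-concavity, since $\lambda_0 = \lambda_2 = d-1 \leq \lambda_1$) is a detail the paper elides, and is a welcome bit of added care rather than a different approach.
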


For instance, let $b_\Omega$ be the billiards map in a bounded real component of an elliptic curve in $\R^2$. The cheap bound (Section \ref{sect_cheap}) gives
$$h_{\Top}(b_\Omega) \leq \log 16$$
and our main result gives
$$h_{\Top}(b_\Omega) < \log 11.2.$$

Strangely, the entropy of the time-one map of the billiard flow in a real algebraic curve can nevertheless be infinite \cite{MR2201936}. 

The relationship between the geometry of the boundary curve $C$ and the entropy of the billiards map $b_\Omega$ is very subtle \cite{MR3754521, MR1468104, MR896765, MR872698, MR4213303, MR4632917, MR2201936, MR2609011}. In an ellipse, the billiards map has topological entropy $h_{\Top}(b_\Omega) = 0$; the celebrated Birkhoff conjecture implies that this property characterizes ellipses among smooth, strictly convex Jordan curves. See the survey \cite{MR3388585} and the instructive simulation \cite{butterflyeffect}. On the other hand, even tiny deviations from an ellipse can have ``chaos'', or positive entropy. The question of calculating topological entropy of billiards in a given curve is a vast enrichment of the Birkhoff conjecture.

Upper bounds on entropy were previously known only for a few simple tables such as polygons ($h_{\Top} = 0$), ellipses ($h_{\Top} = 0$), and stadia ($h_{\Top} < \log 3.5$) \cite{MR3754521, MR1468104, MR3388585, MR4632917}. In degree $d = 2$, Theorem \ref{thm_main} and Corollary \ref{cor_top} give a new proof of the complete integrability of the billiard in an ellipse, from which one may obtain Poncelet's porism (Corollary \ref{cor_poncelet}). The other well-studied tables, polygons and stadia, are not smooth, so Corollary \ref{cor_top} does not apply.

Another application of Theorem \ref{thm_main} concerns orbit growth in real algebraic billiards, following the study of elliptic billiards by Corvaja-Zannier \cite{MR4668525}. Fix points $c, c' \in \partial \Omega$ and an integer $m > 0$. How many ways can a ball fired from $c$ hit a target at $c'$ in $m$ bounces?

\begin{cor} \label{cor_orbit_growth}
Let $\Omega \subset \R^2$ be a region such that the Zariski closure of $\partial \Omega$ in $\PP^2_\C$ is a smooth algebraic curve $C$ of degree $d \geq 2$. Let $b_\Omega$ be the real billiard map in $\Omega$, and let $c, c' \in \partial \Omega$. Let 
$$a_m(c, c') = \quad \# \{ q \in S^1 : b_\Omega^m (c, q) = (c', q') \quad \textrm{ for some } q' \in S^1 \}. $$
Assume that $a_m(c, c') < \infty$ for all $m$. Then, if $\rho_d < 2d^2 - d - 3$ is the constant of Theorem \ref{thm_main}, we have that for all $\epsilon > 0$,
$$ a_m(c, c') = O(( \rho_d + \epsilon)^m). $$
\end{cor}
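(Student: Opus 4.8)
The plan is to bound the real counting function $a_m(c,c')$ by a complex intersection number on the billiards surface $C \times D$, and then to control that number's growth by the first dynamical degree via Theorem \ref{thm_main}. First I would observe that every real broken trajectory from $c$ to $c'$ of combinatorial length $m$ is in particular a \emph{complex} length-$m$ orbit of the algebraic billiards correspondence $b$ joining the fiber $\{c\} \times D$ to the fiber $\{c'\} \times D$. Since the real billiards map is single-valued, distinct directions $q \in S^1$ give distinct points $((c,q),(c',q'))$ of $\Gamma_{b^m}$; as real solutions inject into complex ones, $a_m(c,c')$ is at most the cardinality of the set-theoretic intersection
$$ \pi_1^{-1}(\{c\} \times D) \cap \pi_2^{-1}(\{c'\} \times D) \subset \Gamma_{b^m}, $$
which is finite for $(c,c')$ generic by the standing hypothesis $a_m(c,c') < \infty$.

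Next I would convert this finite set into an intersection number on the surface. Writing $F = \{\mathrm{pt}\} \times D$ for the fiber class of the projection $C \times D \to C$, both fibers $\{c\} \times D$ and $\{c'\} \times D$ are numerically equal to $F$, and the complex orbit count is bounded above by the pushforward intersection $(b^m)_* F \cdot F$. Choosing an ample divisor of the form $\Delta = F + (C \times \{\mathrm{pt}\})$ and using that pushforwards of effective divisors are effective while all pairwise intersections of fiber classes on $C \times D$ are nonnegative, I obtain $(b^m)_* F \cdot F \leq \deg_\Delta(b^m)$ up to a universal constant. By Definition \ref{def_intro_dd} the degrees $\deg_\Delta(b^m)$ have exponential growth rate exactly $\lambda_1(b)$, and Theorem \ref{thm_main} gives $\lambda_1(b) \leq \rho_d < 2d^2 - d - 3$. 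Since the leading eigenvalue of the action of $b$ on $\mathrm{NS}(C \times D)$ is simple with a nef eigenclass (a Perron--Frobenius phenomenon for the positive action on the nef cone), there is no polynomial correction, so $\deg_\Delta(b^m) = O(\rho_d^m)$ and hence $a_m(c,c') = O(\rho_d^m)$.

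Finally I would address that Corollary \ref{cor_orbit_growth} permits \emph{reducible} $C_0$, which need not satisfy the nondegeneracy hypotheses of Theorem \ref{thm_main}. The key point is that the entire intersection-theoretic bound depends on $C_0$ only through its total degree $d$, via the B\'ezout bounds controlling the bidegrees of the secant and reflection correspondences; these are insensitive to irreducibility. Degenerate configurations can then be treated by specialization from a nearby nondegenerate curve, using upper semicontinuity of the finite counting function $a_m$, or by summing the bound over the finitely many patterns recording which irreducible component is struck at each of the $m$ bounces.

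The main obstacle I anticipate is the passage from step one to step two: ensuring that the iterated correspondence $b^m$ contributes no \emph{spurious} components to the count. A priori, indeterminacy points of $b$ and curves contracted along the orbit could inflate the naive intersection number beyond $\lambda_1(b)^m$; it is precisely the singularity confinement property (Theorem \ref{thm_main_sc}) and the resulting control of the iterates that guarantee the fiber-to-fiber count is governed by the \emph{first} dynamical degree rather than the larger topological degree. Making the passage from the set-theoretic orbit count to a clean intersection number bounded by $\deg_\Delta(b^m)$ rigorous — rather than merely asymptotically correct — is where the care is needed.
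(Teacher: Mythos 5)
Your overall skeleton is the same as the paper's: real billiard orbits inject into the complex correspondence via Proposition \ref{prop_b_basic}(\ref{it_real_inside_complex}), the finiteness hypothesis rules out shared components, numerical equivalence of the fibers $c \times D$ and $c' \times D$ converts the count into $b^m(c\times D)\cdot(c\times D) = \deg_{c\times D}(b^m)$, and Theorem \ref{thm_main} bounds the growth rate. (The paper works directly with the fiber class rather than an ample $\Delta$; your ample-divisor comparison is a harmless variant.) Your reducibility worries are also already absorbed by the paper's framework: Proposition \ref{prop_b_basic}(\ref{it_real_inside_complex}) shows that the Zariski closure of a real boundary is automatically a (possibly reducible) algebraic billiard curve, so no specialization or component-pattern argument is needed beyond what Theorem \ref{thm_main} itself provides.

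The genuine gap is your final step, where you deduce $\deg_\Delta(b^m) = O(\rho_d^m)$ from ``a Perron--Frobenius phenomenon'' for the action of $b_*$ on $\Num^1(C\times D)$. This fails for a structural reason: $b$ is \emph{not} algebraically stable on $C \times D$. Its contractions (the curves $C \times q$ over scratch points at infinity) are sent to points of $\Ind b$, so $(b^m)_* \neq (b_*)^m$, and the spectral radius of $b_*$ on $\Num^1(C \times D)$ is the cheap bound $d^2 + \sqrt{d^4 - 5d^2 + 4d + 1} \approx 2d^2$ of Theorem \ref{thm_cheap_upper_bound}, strictly larger than $\rho_d$. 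So no eigenvalue analysis on $C \times D$ can produce the constant $\rho_d$; that constant only appears after passing to the modified phase space $P$ of Section \ref{sect_global_bound}, and even there the paper proves only $\lambda_1(b) \leq \rad \hat{b}_* = \rho_d$, not algebraic stability (which is merely conjectured). The paper's proof of Corollary \ref{cor_orbit_growth} avoids this entirely: having reduced to $\deg_{c\times D}(b^m)$, it invokes only Definition \ref{def_prelim_dd} (the degrees of iterates grow at rate $\lambda_1(b)$) together with $\lambda_1(b) \leq \rho_d$ from Theorem \ref{thm_main}. Relatedly, your appeal to singularity confinement (Theorem \ref{thm_main_sc}) as the ingredient controlling ``spurious components'' is a misattribution: that theorem plays no role in deriving this corollary, since the shared-component issue is exactly what the hypothesis $a_m(c,c') < \infty$ excludes, and the remaining bookkeeping is built into the definition of the pushforward and of $\deg_\Delta$.
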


Indeed, basic intersection theory shows that the complex intersections of $c' \times D$ with $b^m(c \times D)$ are counted by the dynamical degree. The corollary follows, since the real intersections are among the complex ones. Of course, it seems likely that vanishingly few of the complex intersections are in fact real, but this seems difficult to prove. The finiteness condition excludes exceptional cases in which the curves $c' \times D$ and $b^m(c \times D)$ share a component. For a reducible example, take $m = 2$ and $c = c'$ the center of a semicircle.

In degree $d = 2$, Corollary \ref{cor_orbit_growth} only shows that $a_m$ grows subexponentially, but in fact much more is known \cite{MR4668525}. Indeed, when $\partial \Omega$ is an ellipse, billiards is completely integrable, and $a_m(c, c')$ grows linearly in $m$ with an explicit constant in terms of elliptic integrals. The number of complex directions bouncing $c$ to $c'$ grows quadratically.

\subsection{Sketch of proofs}

Our proofs are written in the language of algebraic dynamics and require no prior knowledge about billiards.

We decompose billiards as a composition of two correspondences $s$ and $r$. The \emph{secant correspondence} $s$ intersects a line of given base point $c$ and direction $q$ with the table $C$, and the \emph{reflection correspondence} $r$ reflects the direction $q$ across the tangent line at $c$.

We prove Theorem \ref{thm_main_invariant_form} first, by direct computations with differential forms. Our computation was inspired by a symplectic reduction argument in pseudo-Riemannian billiards by Khesin-Tabachnikov \cite{MR2518642}, which in some sense goes back to Melrose \cite{MR436225, MR614395}.

The proofs of Theorem \ref{thm_main} and Theorem \ref{thm_main_sc} rely on a careful analysis of the behavior of the billiards correspondence at points of indeterminacy. We dub these indeterminacies \emph{scratch points}. (In the real-life game of pool, a \emph{scratch} is a shot that causes the cue ball to enter a pocket, ending its orbit.) Scratch points come in two flavors. \emph{Scratch points at infinity} correspond to points of $C$ on the line at infinity of $\PP^2$; these are indeterminacy points of the secant correspondence $s$. \emph{Isotropic scratch points} correspond to points of $C$ where the tangent line has slope $\pm i$; these are indeterminacy points of the reflection correspondence $r$. The word \emph{isotropic} appears because the slopes $\pm i$ are self-normal directions for the natural complex metric $dx_0^2 + dx_1^2$.

The key technical tool we introduce is the \emph{modified phase space} $P$, defined as the blowup of the directed phase space $C \times D$ at every scratch point (Definition \ref{def_mod_phase_space}). We show that billiards has much less indeterminacy on the birational model $P$ in a series of four lemmas that describe the local behavior of $r$ and $s$ at blown-up scratch points (Section \ref{sect_local}). Note that blowing up indeterminacy points of a ``typical'' map or correspondence just creates new ones, but in our case $P$ is genuinely a better space of initial conditions than $C \times D$; in fact, we expect that $P$ is an algebraically stable model when $C$ is very general.

The proof of Theorem \ref{thm_main} proceeds by computing the action of the billiards correspondence $b$ on the cohomology of the modified phase space $P$, via the pushforward linear map $b_*$. To get a finite-dimensional vector space, we work with the group $\Num^1 P$ of divisors up to numerical equivalence, and we compute the spectral radius of $b_*$. Here, there is another surprise: although $\Num^1 C \times D \cong \Z^{2+2d^2}$, where $d = \deg C$, we nonetheless find that the leading eigenvalue of $b_*$ is an algebraic integer of degree at most $3$.

Finally, the proof of singularity confinement (Theorem \ref{thm_main_sc}) is another application of the local blowup lemmas in Section \ref{sect_local}; these allow us to compute the images of each contracted curve by $b^2$.

\subsection*{Road map}

Section \ref{sect_prelim} contains preliminaries on correspondences and dynamical degrees. Section \ref{sect_basic} defines algebraic billiards as a correspondence over any algebraically closed field of characteristic not equal to $2$. Section \ref{sect_invariant_form} proves Theorem \ref{thm_main_invariant_form}. Section \ref{sect_cheap} proves the ``cheap'' version of Theorem \ref{thm_main} via an argument that is short but not sharp. Section \ref{sect_local} shows how to resolve contracted curves and indeterminacy locally by blowing up scratch points. Section \ref{sect_global_bound} proves Theorem \ref{thm_main}, Corollary \ref{cor_top}, and Corollary \ref{cor_orbit_growth}. Section \ref{sect_sc} introduces and proves the singularity confinement property of billiards, Theorem \ref{thm_main_sc}.

\subsection*{Acknowledgments}

Thanks to Richard Birkett, Laura DeMarco, Nguyen-Bac Dang, Jeffrey Diller, Alexey Glutsyuk, Hannah Larson, Curt McMullen, Rohini Ramadas, Joseph Silverman, Serge Tabachnikov, Jit Wu Yap, and Umberto Zannier for helpful conversations related to this work. The idea to study dynamical degrees of billiards came while reading Cannas da Silva's lectures on symplectic geometry \cite{MR1853077}. The author was supported by a National Science Foundation Mathematical Sciences Postdoctoral Research Fellowship under Grant No. 2202752.

\section{Background on Dynamics of Correspondences} \label{sect_prelim}

\subsection{Correspondences}
Essential background on algebraic cycles and intersection theory may be found in \cite{MR1644323, MR3617981}.

\begin{defn} \label{def_rat_corr}
    Given irreducible varieties $X$ and $Y$, a \emph{rational correspondence} $f: X \vdash Y$, also written
    $$f = (\Gamma_f, X, Y) = (\Gamma_f, \pi_1, \pi_2),$$
    is an effective algebraic cycle in $X \times Y$ of the form
$$\Gamma_f = \sum_{i = 1}^\nu m_i [\Gamma_i] \neq 0,$$
such that each summand satisfies $\dim \Gamma_i = \dim X$. Our setting is that of \emph{surface correspondences}; that is, we always have 
$$\dim X = \dim Y = 2.$$
We denote the projections $\Pi_1 : X \times Y \to X$ and $\Pi_2 : X \times Y \to Y$; their restrictions to $\Gamma_f$ are denoted $\pi_1$, $\pi_2$.

A rational correspondence of irreducible varieties is \emph{dominant} if the projection maps $\pi_1$ and $\pi_2$ restrict to dominant, generically finite morphisms on each $\Gamma_i$. 
We always assume $\Gamma_f \neq 0$. We say that $f$ is \emph{$m$-to-$n$}, where $m = \deg \pi_2$ and $n = \deg \pi_1$. 

We frequently view $\Gamma_f$ itself as a variety in situations where multiplicities may be ignored.

\begin{remark}
    Let $\Sym^n Y$ denote the $n$-th symmetric product of $Y$. A rational map $\phi: X \dashrightarrow \Sym^{n} Y$ of degree $m$ defines an $m$-to-$n$ rational correspondence $\Gamma_\phi$, by taking the Zariski closure of the locus of all pairs $(x, y) \in X \times Y$ such that $\phi$ is defined at $x$ and $y \in \phi(x)$. When $n = 1$, this recovers the usual notion of the graph of a rational map.
\end{remark}

The \emph{indeterminacy locus} $\Ind f \subset X$ of $f : X \vdash Y$ is the set 
$$\Ind f \colonequals \{x \in X: \pi_1^{-1}(x) \text{ is not finite} \}.$$

The \emph{exceptional locus} $\Exc f \subset Y$ of $f: X \vdash Y$ is the set
$$\Exc f \colonequals \{y \in Y: \pi_2^{-1}(y) \text{ is not finite} \}.$$

Given rational correspondences $f, g: X \vdash Y$, we define $f + g$ to be the reducible correspondence with graph $\Gamma_f + \Gamma_g$.
\end{defn}

\begin{remark} \label{rem_reduc}
The definition of dominant correspondence may be extended to domains that are reducible varieties as follows; see \cite[Section 3.1]{MR4048444}. Let $X$ and $Y$ be varieties. Let $X_1, \dots, X_\gamma$ be the irreducible components of $X$, and let $Y_1, \dots, Y_\delta$ be the irreducible components of $Y$. A \emph{correspondence} $f : X \vdash Y$ is a collection of correspondences $f_{i,j}: X_i \vdash Y_j$, where $1 \leq i \leq \gamma$ and $1 \leq j \leq \delta$. 

The correspondence $f$ is \emph{dominant} if and only if each $f_{i, j}$ is dominant, the union of the domains of definition of the $f_{i, j}$ is dense in $X$, and the union of the ranges of the $f_{i, j}$ is dense in $Y$.

Then one can define compositions, and all of Truong's results on dynamical degrees then hold; see \cite[Section 6.2]{MR4048444}. Our main focus in this paper is on correspondences on irreducible varieties, but we point out when reducibility comes into play.
\end{remark}

\begin{defn} \label{def_adjoint}
    Let $f: X \vdash Y$ be a correspondence. The \emph{adjoint} of $f$, denoted $f^{\adj}$, is the correspondence $Y \vdash X$ defined by the image of $\Gamma_f \subset X \times Y$ in $Y \times X$ under the map $(x, y) \mapsto (y,x)$. A correspondence $f : X \vdash X$ is \emph{symmetric} or \emph{self-adjoint} if $f = f^{\adj}$.
\end{defn}

\begin{defn} \label{def_composite}
Given $f: X \vdash Y$ and $g: Y \vdash Z$, the \emph{composite} $g \circ f: X \vdash Z$ is defined as follows, generalizing the case of rational maps. First, suppose that the graphs $\Gamma_f$ and $\Gamma_g$ are reduced and irreducible, that is, each consists of the class of a single variety with multiplicity $1$. We may then treat the graphs $\Gamma_f$ and $\Gamma_g$ as varieties. We name the projections
$$\pi_1 : \Gamma_f \to X,$$
$$\pi_2 : \Gamma_f \to Y,$$
$$\pi_3 : \Gamma_g \to Y,$$
$$\pi_4 : \Gamma_g \to Z.$$
Let $V = Y \smallsetminus \Ind g$, and let $U \subset X$ be $U = f^{-1}(V) \smallsetminus \Ind f$. These restrictions give us open subsets $\Gamma_f^{U \times V} \colonequals \Gamma_f|_{U \times V}$ and $\Gamma_g^{V \times Z} \colonequals \Gamma_g|_{V \times Z}$. We then form the fiber product as a scheme inside $U \times V \times Z$:
$$ \Gamma_{g \circ f}^{U \times V \times Z} \colonequals \Gamma_f^{U \times V} \times_V \Gamma_g^{V \times Z}.$$
Let $\Gamma_{g \circ f}^{U \times Z}$ be the image of $\Gamma_{g \circ f}^{U \times V \times Z}$ by the projection 
$$ U \times V \times Z \to U \times Z.$$
Then, the scheme $\Gamma_{g \circ f}^{U \times Z}$ may be viewed as an algebraic cycle of dimension $n$, possibly reducible; so write
$$ \Gamma^{U \times Z}_{g \circ f} = \sum m_i [\Gamma_i] .$$
For each $i$, let $\bar{\Gamma}_i$ be the Zariski closure of $\Gamma_i$ in $X \times Z$.
We define $g \circ f$ as the correspondence $X \vdash Z$ with graph
$$ \Gamma_{g \circ f} \colonequals \Gamma_{g \circ f}^{X \times Z} \colonequals \sum_i m_i [\bar{\Gamma}_i].$$

To define composites of correspondences with reduced or reducible graphs, we extend the definition by the rules 
$$\Gamma_{(f + g) \circ h} = \Gamma_{f \circ h} + \Gamma_{g \circ h}$$
and 
$$\Gamma_{f \circ (g + h)} 
 = \Gamma_{f \circ g} + \Gamma_{f \circ h}.$$
\end{defn}

 \begin{remark} \label{rem_composite_nat_arrows}
 Let $f: X \vdash Y$ and $g: Y \vdash Z$. The following diagram is helpful in understanding the construction of the composite. The first is the formation of the fiber product, the second applies the universal property of the product $U \times Z$, and the third takes the Zariski closure in $X \times Z$. The dashed arrows denote rational maps.
 \begin{center}
\begin{tikzcd}[row sep=small, column sep=small]
     &        & \Gamma^{U \times V \times Z}_{g \circ f} \arrow[dl] \arrow[dr] &   & \\
     & \Gamma^{U \times V}_f \arrow[dl] \arrow[dr] & & \Gamma^{V \times Z}_g \arrow[dl] \arrow[dr] & \\
U &    & V &     & Z
\end{tikzcd} \hspace{1.3in}
\begin{tikzcd}[row sep=small, column sep=small]
     &         \Gamma_{g \circ f}^{U \times V \times Z} \arrow[d,->>] \arrow[ddr, bend left] \arrow[ddl, bend right] \\
     & \Gamma_{g \circ f}^{U \times Z} \arrow[ld] \arrow[rd] \\
     U & & Z
\end{tikzcd}
\begin{tikzcd}[row sep=small, column sep=small]
     &        & \Gamma_{g \circ f} \arrow[dl, dashed] \arrow[ddll,bend right=40] \arrow[dr, dashed] \arrow[ddrr, bend left=40] &   & \\
     & \Gamma_f \arrow[dl] \arrow[dr] & & \Gamma_g \arrow[dl] \arrow[dr] & \\
X &    & Y &     & Z
\end{tikzcd}
\end{center}
The composition of rational maps $\Gamma_{g \circ f} \dashrightarrow \Gamma_f \to X$ agrees with the morphism $\Gamma_{g \circ f} \to \Gamma_X$, and $\Gamma_{g \circ f} \dashrightarrow \Gamma_g \to Z$ agrees with the morphism $\Gamma_g \to Z$. The two rational maps $\Gamma_{g \circ f} \dashrightarrow \Gamma_f \to Y$ and $\Gamma_{g \circ f} \dashrightarrow \Gamma_g \to Y$ agree.
 \end{remark}

The following definition is not standard, but is very useful.
\begin{defn}[Contractions and expansions] \label{def_contraction}
 Assume $X, Y$ are surfaces. A \emph{contraction} of $f: X \vdash Y$ is an integral curve $V$ in $\Gamma_f$ such that $\pi_1(V)$ is a curve and $\pi_2(V)$ is a point. 
 
 An \emph{expansion} of $f: X \vdash Y$ is an integral curve $W$ in $\Gamma_f$ such that $\pi_1(W)$ is a point and $\pi_2(W)$ is a curve.
 \end{defn}

 \begin{example}
     The following claims are all immediate from the definitions. Say $f : X \vdash Y$ is $m$-to-$n$.
 \begin{enumerate}
     \item If $V$ is a contraction, then $\pi_2(V) \in \Exc f$.
     \item If $V$ is an expansion, then $\pi_1(V) \in \Ind f$.
     \item The adjoint $f^{\adj}$ is $n$-to-$m$.
     \item We have $\Ind f^{\adj} = \Exc f$ and $\Exc f^{\adj} = \Ind f$. We may identify contractions of $f$ with expansions of $f^{\adj}$, and expansions of $f$ with contractions of $f^{\adj}$.
     \item If $f$ is a birational map, then $f^{\adj}$ is its inverse.
 \end{enumerate}
 \end{example}

 \begin{example} \label{ex_blowup}
     The reader can build intuition by verifying the following claims. Let $\tau: P \vdash \PP^2$ be the point blowup at $p$, with exceptional divisor $E$. Let $\sigma: \PP^2 \vdash P$ be the adjoint of $\tau$.
     \begin{enumerate}
         \item The only contraction of $\tau$ is $E \times p$, and $\tau$ has no expansions.
         \item The graph $\Gamma_{\sigma \circ \tau}$ is the diagonal embedding of $P$ in $P \times P$, so $\sigma \circ \tau = \Id_P$.
         \item The fiber product $\Gamma_\tau \times_{\PP^2} \Gamma_\sigma$ properly contains $\Gamma_{\sigma \circ \tau}$.
         \item The natural rational maps $\Gamma_{\sigma \circ \tau} \dashrightarrow \Gamma_{\sigma}$ and $\Gamma_{\sigma \circ \tau} \dashrightarrow \Gamma_{\tau}$ are regular maps, and the inverse image of $E \times p \subset \Gamma_\sigma$ in $\Gamma_{\sigma \circ \tau}$ is the diagonal embedding of $E$ in $P \times P$.
         \item The graph $\Gamma_{\tau \circ \sigma}$ is the diagonal embedding of $\PP^2$ in $\PP^2 \times \PP^2$, so $\tau \circ \sigma = \Id_{\PP^2}$.
         \item We have an equality $\Gamma_\sigma \times_{P} \Gamma_{\tau} = \Gamma_{\tau \circ \sigma}$.
         \item The natural rational maps $\Gamma_{\tau \circ \sigma} \dashrightarrow \Gamma_{\tau}$ and $\Gamma_{\tau \circ \sigma} \dashrightarrow \Gamma_{\sigma}$ have indeterminacy.
     \end{enumerate}
     This example illustrates a key motivation for restricting to the subsets $U$ and $V$ when forming the fiber product in Definition \ref{def_composite}. To have a category, the composite should be a dominant rational correspondence. The ``naive'' fiber product $\Gamma_f \times_Y \Gamma_g$ may contain irreducible components like $E \times E$ on which the projections are not dominant.
 \end{example}

Modifying the domain of a correspondence is often useful:
\begin{defn} \label{defn_tot_transform}
    Given a correspondence $f : X \vdash X$, and a blowup $\tau: P \to X$ over a point $p$, the \emph{total transform} is the scheme-theoretic pullback of $\Gamma_f$ to $P \times P$. The \emph{strict transform} $f_{\str} : P \vdash P$ is the rational correspondence
$$f_{\str} \colonequals \tau^{-1} \circ f \circ \tau.$$
\end{defn}

 \subsection{Pushforward and pullback}
This section presents the standard definitions of pushforward and pullback of divisor classes by surface correspondences of smooth, irreducible varieties.
\begin{defn}
    Given a rational correspondence $f: X \vdash X$, where $f = (\Gamma_f, \pi_1, \pi_2)$, and a set $\Sigma \subset X$, the \emph{(total) image} of $\Sigma$ by $f$ is the set
$$f(\Sigma) = \pi_2(\pi_1^{-1}(\Sigma)).$$
\end{defn}

If a correspondence has expansions or contractions, then taking set-theoretic images can cause the dimension of a set to change. The pushforward, which we define in this section, is a kind of image operation that respects dimension and multiplicity.

In this paper, a \emph{multiset} is an effective $0$-cycle. When working with multisets, we use the symbol $\smallsetminus$ to denote multiset difference, taking multiplicities into account.

A projective variety $X$ has an associated Chow ring $A^\ast (X)$ and Chow groups $A_\ast (X)$. When $X$ is smooth, we may identify $A^\ast(X)$ with $A_\ast(X)$. The elements of $A^\ast(X)$ are algebraic cycles up to rational equivalence, graded by cycle dimension. Addition is formal, and multiplication is defined via the intersection product $\cdot$. In this paper, we need only consider the $1$-graded parts $A^1(X)$ and $A_1(X)$.

\begin{defn}[{\cite[Section 1.3]{MR3617981}
}]\label{def_reg_pb_pf} 
    Let $u : V \to W$ be a generically finite and dominant morphism of smooth projective surfaces, and let $[\Sigma]$ be a prime cycle of $V$. Let the generic points of $\Sigma$ and $u(\Sigma)$ be denoted $\theta, \eta$ respectively. Since the surfaces $V$ and $W$ are projective, the map $u$ is proper. The \emph{proper pushforward} $u_* [\Sigma]$ is defined by
\begin{equation}
    u_* [\Sigma] \colonequals \begin{cases}
        0, & \dim u(\Sigma) < \dim \Sigma,\\
        [ K(\theta) : K(\eta) ] \; [u(\Sigma)], & \text{otherwise}. 
    \end{cases}
\end{equation}
This map is well-defined on rational equivalence classes, so it descends to the prime elements of the Chow group; then it extends by linearity to define a graded group homomorphism
$$u_* : A_*(V) \to A_*(W).$$

There is also a \emph{pullback} homomorphism $u_* : A_*(W) \to A_*(V)$. For our purposes, we only need to know its values in simple situations, as follows. Let $\Sigma$ be a prime $1$-cycle of $W$, that is, a prime Weil divisor. Say $f^{-1}(\Sigma)$ is generically reduced; this happens, for instance, if the inverse image of a general point of $\Sigma$ consists of $d$ distinct points, where $d = \deg u$. Then
$$u^* [\Sigma] = [u^{-1} (\Sigma)].$$
\end{defn}

\begin{defn} \label{def_corr_push}
A dominant correspondence $f: X \vdash Y$ of smooth projective surfaces, defined by $f = (\Gamma_f, \pi_1, \pi_2)$, has an associated \emph{pushforward} homomorphism 
$$f_*: A_*(X) \to A_*(Y),$$
and a \emph{pullback} homomorphism 
$$f^*: A_*(Y) \to A_*(X).$$
If $\Gamma_f$ is smooth and irreducible, these are defined respectively by
$$f_* \colonequals (\pi_2)_* \circ (\pi_1)^*$$
$$f^* \colonequals (\pi_1)_* \circ (\pi_2)^*.$$
The case of reducible $\Gamma_f$ is not needed for our purposes.

If $\Gamma_f$ is not smooth, then let $\gamma: \Gamma'_f \to \Gamma_f$ be a minimal resolution of singularities by blowups, let $\pi'_1 = \pi_1 \circ \gamma$, and let $\pi'_2 = \pi_2 \circ \gamma$. Then define
$$f_* \colonequals (\pi'_2)_* \circ (\pi'_1)^*,$$
$$f^* \colonequals (\pi'_1)_* \circ (\pi'_2)^*.$$
These definitions are independent of the desingularization $\Gamma'_f$; see e.g. \cite[Lemma 3.2]{MR3342248} for a proof stated for $\C$ but that holds over any algebraically closed field.
\end{defn}

A Chow class $\alpha \in A^*(X)$ is \emph{numerically equivalent to $0$} if, for all $\beta \in A^*(X)$, we have $\alpha \cdot \beta = 0$. Two cycles are \emph{numerically equivalent} if their difference is numerically equivalent to $0$. We denote the group of cycle classes up to numerical equivalence by $\Num X$. It is a finitely generated free abelian group.

We let the $1$-graded part of $\Num X$ be denoted $\Num^1 X$. Pushforward and pullback descend to well-defined group homomorphisms
$$f_* :  \Num^1 X \to \Num^1 Y,$$
$$f^* :  \Num^1 Y \to \Num^1 X.$$

\begin{lemma}
Let $f : X \vdash Y$ be a dominant correspondence of smooth, irreducible projective surfaces, let $\Delta \in \Num^1 X$, and let $\Delta' \in \Num^1 Y$. Let $\Pi_1 : X \times Y \to X$ and $\Pi_2 : X \times Y \to Y$ be the projections. Let $\iota_f(\Delta, \Delta') \colonequals f_* \Delta \cdot \Delta'$. Then
\begin{equation} \label{eq_pfwd_ix}
    \iota_f(\Delta, \Delta') = \Gamma_f \cdot \Pi_1^* \Delta \cdot \Pi_2^* \Delta'.
\end{equation}
\end{lemma}

\begin{proof}
An equivalent definition of pushforward by a irreducible, reduced correspondence $f$ is
$$f_* \Delta = (\Pi_2)_* (\Gamma_f \cdot \Pi_1^* \Delta),$$
see e.g. \cite{MR3342248} (the equivalence argument in Section 3 of that article holds for surfaces over any algebraically closed field).
Then the claim follows from the Push-Pull formula. For reducible correspondences, the claim extends by linearity.
\end{proof}

\begin{example} \label{ex_blowup_pfwd}
We continue Example \ref{ex_blowup}. We have $\tau_* E = 0$, so $\sigma_* \tau_* E = 0$. Yet $(\sigma \circ \tau)_* E = E$. Thus, pushforward is not functorial in general. However, we do have $\tau_* \sigma_* = (\tau \circ \sigma)_*$.
\end{example}

\subsection{Dynamical degrees}
We work in the framework of Truong's theory of dynamical degrees of correspondences \cite{MR4048444}.

Often, we have $g_* f_* \neq (g \circ f)_*$ for a composable pair of correspondences $f$ and $g$ (Example \ref{ex_blowup_pfwd}). The dynamical degree $\lambda_1(f)$ describes the ``growth'' of $(f^m)_*$ as $m \to \infty$.

 \begin{defn}[\cite{MR4048444}] \label{def_prelim_dd}
    Let $f: X \vdash X$ be a rational correspondence on an irreducible smooth projective variety $X$ of dimension $2$ over an algebraically closed field $k$. Define the \emph{degree} of $f$, relative to a fixed divisor $\Delta$, by the intersection formula
$$\deg_\Delta (f) \colonequals f_* \Delta \cdot \Delta.$$
If $\Delta$ is ample, the \emph{dynamical degree of $f$} is the limit
\begin{equation} \label{eq_dd}
    \lambda_1(f) \colonequals \lim_{m \to \infty} \deg_\Delta (f^m)^{1/m}.
\end{equation}
    The limit \eqref{eq_dd} exists and is independent of the choice of $\Delta$.
\end{defn}

\begin{defn}
    A dominant surface correspondence $f$ is \emph{algebraically stable} if, for all positive integers $m \geq 1$, we have
    $$ (f_*)^m = (f^m)_*.$$
\end{defn}

The \emph{spectral radius} of a linear endomorphism $T$ is defined as
$$\rad T \colonequals \max_{\lambda} \abs{\lambda},$$
where $\lambda$ ranges over eigenvalues of $T$.

We bound dynamical degrees using the following well-known properties. All these facts follow from standard arguments, but many appear in the literature only for special cases such as birational maps. For the sake of brevity, we give proof sketches with pointers to the literature.
\begin{thm} \label{thm_dd_props}
    \leavevmode
    \begin{enumerate}
        \item \label{it_functorial} Let $f : X \vdash Y$ and $g : Y \vdash Z$ be dominant correspondences of irreducible projective smooth surfaces. If there do not exist any contractions of $f$ that map to indeterminacy points of $g$, then
        $$g_* f_* = (g \circ f)_*.$$
        \item
        \label{it_dd_bound}
        Let $f: X \vdash X$. Then
        $$\lambda_1(f) \leq \rad{ f_* }.$$
        \item \label{it_birat_inv}
        The dynamical degree $\lambda_1(f)$ of $f: X \vdash X$ is a birational conjugacy invariant of $(X, f)$.
        \item \label{it_specialization}
        Let $S$ be a smooth integral scheme with generic point $\kappa$, and let $\pi : X \to S$ be a projective, smooth, surjective morphism of relative dimension $2$. Let $X_p$ denote the fiber at $p \in S$. Let $f: X \vdash X$ be a rational correspondence over $S$ with the following property: for each $p \in S$, the restriction $\Gamma_f|_{X_p \times X_p}$ is the graph of a dominant rational correspondence $f_p : X_p \vdash X_p$. Then, for all $p \in S$, we have
        $$ \lambda_1 ( f_p ) \leq \lambda_1 (f_\kappa).$$
        \item \label{it_not_ample}
        For any divisor $\Delta$ on $X$, for all $\epsilon$, we have as functions of $m$ that
        $$\deg_\Delta (f^m) = O((\lambda_1(f) + \epsilon)^m).$$
    \end{enumerate}
\end{thm}
\begin{proof}[Sketch]
\par(1) If $f, g$ are rational maps over $\C$, the claim is 
\cite[Proposition 1.4]{MR3342248}. Since the proof is by a computation with Chow groups (valid in any characteristic) and does not use the fact that the projections from $\Gamma_f, \Gamma_g$ are generically of degree $1$, the proof applies to correspondences in any characteristic.
\par(2) Two sequences $(a_m)$ and $(b_m)$ of real numbers are called \emph{equivalent} in this argument if there exists a constant $C > 0$ such that, for all $m$, we have $b_m/C \leq a_m \leq C b_m$. If $(a_m)$ and $(b_m)$ are equivalent, then 
$$\lim_{m \to \infty} (a_m)^{1/m} = \lim_{m \to \infty} (b_m)^{1/m}$$
provided that these limits exist.

Choose a norm $\norm{\cdot}$ on $\Num X$. The pushforward of a nef divisor class by a surface correspondence is nef. Further, for composable surface correspondences $g$ and $h$ and any nef class $\Delta$ in the domain, one has that $h_* g_* [\Delta] - (h \circ g)_* [\Delta]$ is again a nef class; this is because, on surfaces, the pullback and pushforward of a nef divisor by a proper morphism is nef, see e.g. \cite[Proof of Lemma 2.1]{MR4583005}. Therefore, if $[\Delta]$ is nef on $X$, we have that $(f_*)^m [\Delta] - (f^m)_* [\Delta]$ is nef for all $m \geq 1$, so
\begin{equation}
    \norm{(f^m)_* [\Delta]} \leq \norm{(f_*)^m [\Delta]}. \label{eq_norm_ineq}
\end{equation}

Use the notation $\norm{\cdot}$ also to denote the induced operator norm. A Perron-Frobenius argument shows, for any ample class $[\Delta]$, that
$ \norm{(f_*)^m [\Delta]} $ and $\norm{(f_*)^m}$
are equivalent. We also have that the sequences $ \norm{(f^m)_* [\Delta]} $ and $\deg_{\Delta} f^m$ are equivalent (follows from \cite[p.151]{MR4048444}).

Putting these facts together, if $[\Delta]$ is an ample divisor class, we have
\begin{align*}
    \lambda_1 (f) &= \lim_{m \to \infty} (\deg_{\Delta} f^m)^{1/m} \\
                  &= \lim_{m \to \infty} \norm{(f^m)_* [\Delta]}^{1/m} \\
                  &\leq \lim_{m \to \infty} \norm{(f_*)^m [\Delta]}^{1/m} & \text{(by \eqref{eq_norm_ineq})} \\
                  &= \lim_{m \to \infty} \norm{(f_*)^m }^{1/m} & \text{(by equivalence)} \\
                  & = \rad f_* & \text{(a consequence of Jordan normal form)}.
\end{align*}
\par(3) \cite[Theorem 1.1]{MR4048444}.
\par(4) Given a line bundle $L$ on $X$ that is nef over $S$, let $L_p = L|_{X_p}$. Noting Definition \ref{def_prelim_dd}, it suffices to prove that for any such $L$ and any $p \in S$, we have
$$\deg_{L_p} (f_p) \leq \deg_{L_\kappa} (f_\kappa).$$
When $X \to S$ is smooth and $f$ is birational (a rational 1-1 correspondence) over $S$, this is \cite[Lemma 4.1]{MR3332894}, with the main ingredient of the proof being the constancy of intersection multiplicities in flat families. The proof given there generalizes straightforwardly to arbitrary correspondences.
\par(5) If $\Delta$ is ample, then 
$$\lim_{m \to \infty} (\deg_\Delta f^m)^{1/m} = \lambda_1(f).$$
If $\Delta$ is any divisor, then it can be expressed as the difference of ample divisors, so
$$\lim_{m \to \infty} (\deg_\Delta f^m)^{1/m} \leq \lambda_1(f).$$
Then apply Definition \ref{def_prelim_dd}.
\end{proof}
\section{Secant, Reflection, and Billiards} \label{sect_basic}

This section provides the ``rules of play'' for algebraic billiards. These results are used throughout the paper. 

We work on the domain $C \times D$, where $C$ is the table and $D$ is a standard unit conic. We give formal definitions of the secant and reflection correspondences on $C \times D$, denoted $s$ and $r$. The billiards correspondence (Definition \ref{def_b}) is the composition 
$$b \colonequals r \circ s.$$
We describe the contracted and exceptional curves for $s$, $r$, and $b$ and describe their basic geometry.

For maximum flexibility, we set up our definitions to accommodate singular and reducible tables. the reader will get the main ideas by considering just smooth, irreducible tables.

\subsection{The idea}

Let us briefly explain how we set up algebraic billiards. Note that most works on complex billiards use the domain $C \times \PP^1$, e.g. \cite{MR3224419}. Our setup is a little different. In particular, in our setup, if the base field is $\C$, a real ray and its opposite ray are distinct elements of $D$.

First, we describe real billiards so that it sounds as algebraic as possible.

Let $C(\R) \subset \R^2$ be the real part of a complex algebraic plane curve $C$. Suppose that $C(\R)$ is smooth and bounded, but not necessarily convex.

A real direction is defined as a point $(q_0, q_1) \in \R^2$ such that $q_0^2 + q_1^2 = 1$; this will play the role of the direction of motion of the particle. If $D$ is the complex conic with affine equation $q_0^2 + q_1^2 = 1$, then $D(\R)$ is the unit circle in $\R^2$, the space of real directions.

Billiards is a composition of two operations.
\begin{enumerate}
    \item Secant: let $(c, q) = (x_0, x_1, q_0, q_1)$ be a ``general'' point of $C(\R) \times D(\R)$; it contains the data of position and direction. Assume $q$ is facing the interior of the table. The ball moves to the first intersection $(x'_0, x'_1)$ of $C(\R)$ with the ray at $c$ of direction $q$. The direction does not change. 
    
    We can write down an algebraic equation this intersection point satisfies: the point-slope form of the equation of the line through $(x_0, x_1)$ with slope $q_1 / q_0$ is 
\begin{equation}
    x'_1 - x_1 = \frac{q_1}{q_0} (x'_0 - x_0). \label{eq_point_slope_motivation}
\end{equation}
    \item Reflection: this operation fixes the point $c = (x_0, x_1) \in C(\R)$, and replaces $(q_0, q_1)$ by its reflection $(q'_0, q'_1)$ across the tangent line to $C$ at $x$. This is often stated as ``the angle of incidence equals the angle of reflection,'' but we can frame this relation algebraically. First notice that we do not care about the actual tangent line at $c$, just its slope, denoted $t_c$. Then we have the convenient formula
    \begin{equation}
    q'_1 - q_1 = -\frac{1}{t_c} (q'_0 - q_0). \label{eq_point_slope_r_motivation}
\end{equation}
The slope $-1/t_c$ appearing in \eqref{eq_point_slope_r_motivation} is the slope of the normal to $C$ at $x$.
Indeed, over $\R$, the point $(q'_0, q'_1)$ is the intersection on $S^1$ of the line through $(q_0, q_1)$ of the slope of the normal to $C$ at $x$; see Figure \ref{fig_r}.
\end{enumerate}
Now we note what changes over an algebraically closed field $k$.
\begin{itemize}
\item We let $C$ be a plane curve over $k$, and $D$ be the conic $q_0^2 + q_1^2 = 1$ defined over $k$.
\item We use \eqref{eq_point_slope_motivation} and \eqref{eq_point_slope_r_motivation} to define graphs of rational correspondences, called $s$ and $r$, on an affine subset of $C \times D$. These extend secant and reflection to almost the whole domain. 
\item To have a projective domain, we homogenize \eqref{eq_point_slope_motivation} and \eqref{eq_point_slope_r_motivation}. This lets us extend secant and reflection at the ``points at infinity'' of $C$ and $D$.
\item As described so far, secant is $d$-valued and reflection is $2$-valued (note that \eqref{eq_point_slope_motivation} and \eqref{eq_point_slope_r_motivation} have trivial solutions $c = c'$ and $q = q'$, respectively). We remove these trivial solutions, so for us, secant is $(d-1)$-valued and reflection is $1$-valued. See Figure \ref{fig_s} and Figure \ref{fig_r}.
\item There are indeterminacy points for the secant and reflection operations related to points of $C$ and $D$ at infinity.
\item Finally, billiards is defined by $b \colonequals r \circ s$.
\end{itemize}

A formal account follows in the next section.

\begin{figure}
\includegraphics[width=3in]{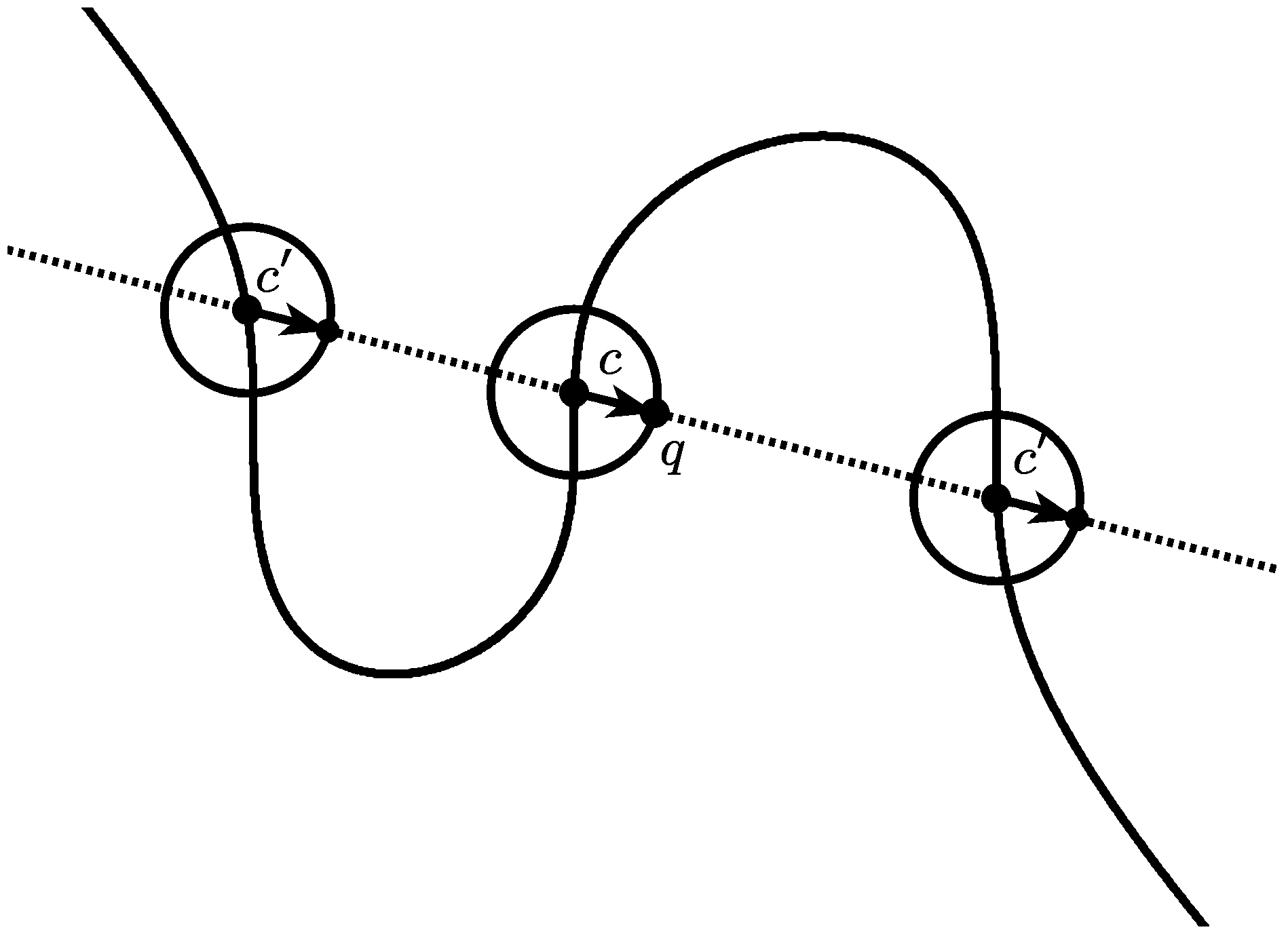}
% \psset{xunit=1cm,yunit=1cm,algebraic=true,dimen=middle,dotstyle=o,dotsize=5pt 0,linewidth=1.4pt,arrowsize=3pt 2,arrowinset=0.25}
% \begin{pspicture*}(-3.6,-3)(4,2.5)
% \psplotImp[linewidth=1.4pt,stepFactor=0.2](-6,-7)(5,4){1-1*y^3+5*x^1-1*x^3}
% \pscircle[linewidth=1.4pt](-0.19981901179691175,0.20710497704778705){0.5}
% \psline[linewidth=1.4pt]{->}(-0.19981901179691175,0.20710497704778705)(0.28503287460855853,0.08496305857154496)
% \psplot[linewidth=1.4pt,linestyle=dashed,dash=1pt 1pt]{-4.491703600880563}{4.352786910606451}{(--0.07600896135667952-0.12214191847624209*x)/0.4848518864054703}
% \pscircle(-2.1599946399960115,0.7009044619865545){0.5}
% \pscircle(2.3369320767713373,-0.42710784829285264){0.5}
% \psline{->}(-2.1599946399960115,0.7009044619865545)(-1.6751427535905428,0.578762543510313)
% \psline{->}(2.3369320767713373,-0.42710784829285264)(2.820617266183942,-0.5537910655826438)
% \begin{scriptsize}
% \psdots[dotstyle=*](-0.19981901179691175,0.20710497704778705)
% \rput[bl](-0.05571800552217047,0.3216279683070583){$c$}
% \psdots[dotstyle=*](0.28503287460855853,0.08496305857154496)
% \rput[bl](0.3158348045831189,-0.2844287059522703){$q$}
% \psdots[dotstyle=*](2.3369320767713373,-0.42710784829285264)
% \rput[bl](2.4320718060312467,-0.2764390103630573){$c'$}
% \psdots[dotstyle=*](-2.1599946399960115,0.7009044619865545)
% \rput[bl](-2.1109369742514192,0.8161833545150385){$c'$}
% \psdots[dotsize=4pt 0,dotstyle=*](-1.6751427535905428,0.578762543510313)
% \psdots[dotsize=4pt 0,dotstyle=*](2.820617266183942,-0.5537910655826438)
% \end{scriptsize}
% \end{pspicture*}
\caption{The secant correspondence $s$ on the real locus of a plane cubic $C$. A point in the domain is represented as a unit-length vector based on $C$, given by a base point $c \in C$ and direction $q \in S^1$. The two images of $(c, q)$ are the points $(c', q)$. Copies of $S^1$ are shown based at each $c'$ to show that the value of $q$ does not change upon applying $s$.}
\label{fig_s}
\end{figure}

\begin{figure}
\includegraphics[width=2.5in]{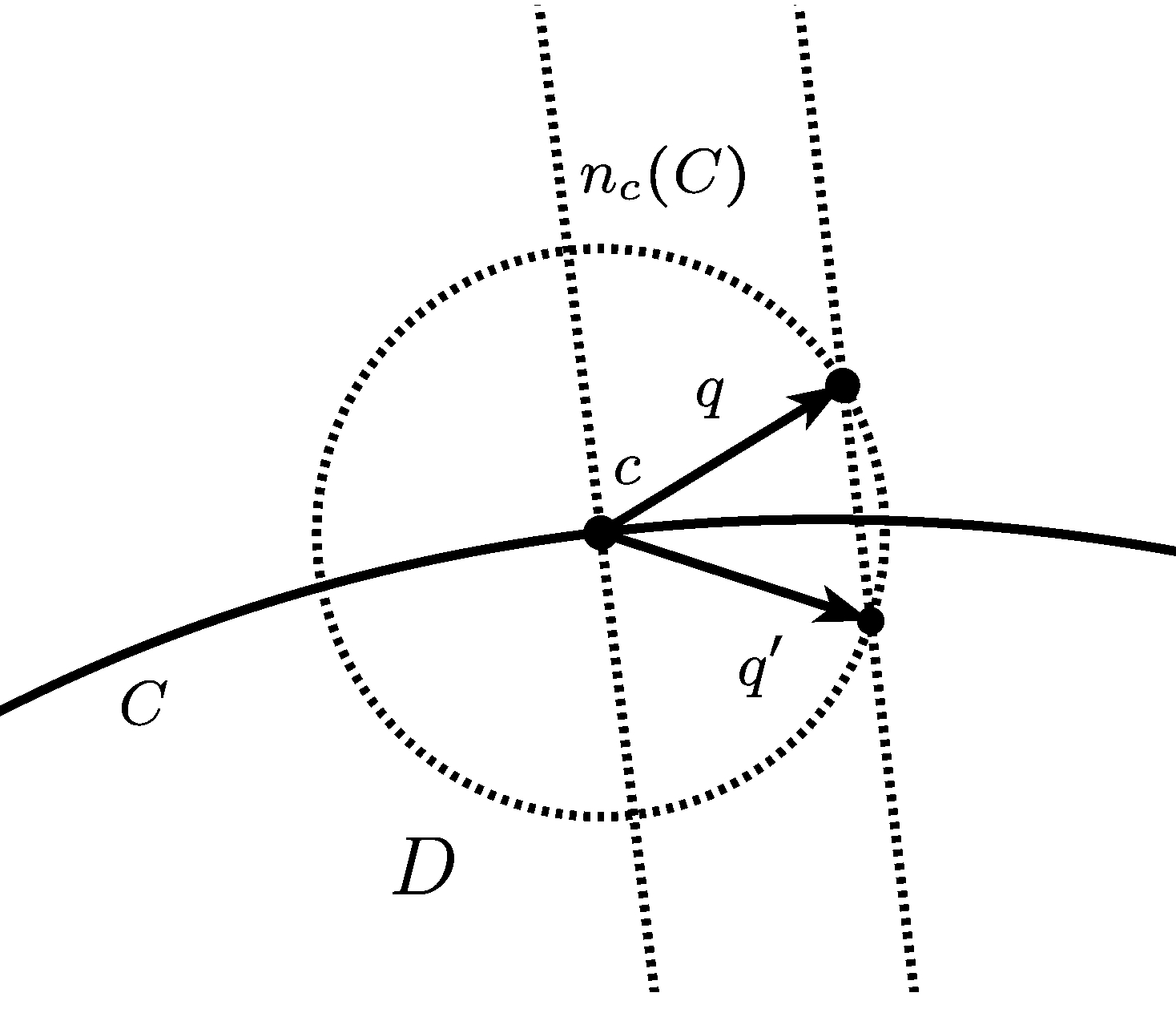}
\caption{The reflection correspondence $r$ on the real locus of a plane curve $C$. The point $(c, q)$ is mapped to $(c, q')$. The point $q'$ is the point on $D$ beside $q$ on the line through $q$ parallel to the normal $n_c(C)$ through $c$ at $C$.}
\label{fig_r}
\end{figure}

\subsection{Secant and reflection}
Let $k$ be an algebraically closed field with $\charac k \neq 2$. Let $x_0, x_1$ be coordinates on $\A^2$. We embed $\A^2$ in $X \colonequals \PP^2$, with homogeneous coordinates $[X_0 : X_1 : X_2]$; so
$$x_0 = X_0 / X_2, \quad x_1 = X_1 / X_2.$$
Let $X_\infty$ be the line at infinity in $X$, i.e. $X_\infty = X \smallsetminus \A^2$.

The tangent bundle $T \A^2$, viewed as a variety, is isomorphic to $\A^4$; further, there is a preferred choice of coordinates
$$ T \A^2 \cong \A^4 = \Spec k[x_0, x_1, q_0, q_1].$$
Formally, $q_0 = dx_0$, $q_1 = dx_1$ are the Zariski cotangent vectors induced by $x_0$ and $x_1$. We treat them as formal generators of a symmetric algebra in order to give the tangent bundle the structure of a variety.

Let $Q \colonequals \PP^2$, and let $[Q_0 : Q_1 : Q_2]$ be homogeneous coordinates on $Q$. We compactify $T \A^2$ to 
$$X \times Q = \PP^2 \times \PP^2.$$
Let $Q_\infty$ be the line at infinity in $Q$, i.e. $Q_\infty = Q \smallsetminus \A^2$.

The \emph{slope} of a line $\ell \neq X_\infty$ in $X$ is the unique point in $X_\infty$ that lies on $\ell$. A nonzero tangent vector $q$ at a point $x \in \A^2$ determines an affine line in $X$, the slope of which depends only on the scaling class of $q$ and not on $x$. Going in the other direction, the pencil of affine lines in $X$ of a given slope all have the same projectivized tangent vector at every point. These operations are inverses, so there is a natural identification 
$$X_\infty \xrightarrow{\raisebox{-0.7ex}[0ex][0ex]{$\sim$}} Q_\infty.$$
This identifies $[X_0 : X_1 : 0]$ with $[Q_0 : Q_1 :0]$ if and only if $[X_0~:~X_1]~= [Q_0 : Q_1]$. 

We therefore define the \emph{space of slopes} to be $\PP^1$, which is identified both with $X_\infty$ and $Q_\infty$. It will occasionally be helpful to write the slope $[1:m]$ simply as $m$. One may check that for any $m,b \in k$, the slope of the line defined by $x_1 = mx_0 + b$ is $m$, as expected.

Let $C_0 \subset X$ be a projective plane curve (i.e. a sum of irreducible curves, possibly with multiplicity, not necessarily smooth, of dimension 1 over $k$). Let $C \to C_0$ be a desingularization. Then $C$ is a disjoint union of irreducible smooth curves with multiplicity, equipped with a map $C \to X$.

Let $C_\infty$ be the set of points of $C$ lying above $X_\infty$. We call these the \emph{points of $C$ at infinity}.

\begin{defn}
Assume that $C_0$ does not contain the line at infinity. The \emph{tangent slope map}
$$ t: C \to \PP^1, $$
$$t(c) = [t(c)_0 : t(c)_1]$$
is defined as follows. The tangent slope $t(c)$ of any nonsingular point $c \in C_0 \cap \A^2$ is the unique intersection point of the tangent line (i.e. embedded tangent space) to $C_0$ at $c$ with $X_\infty$. This gives us a rational map from $C$ to $X_\infty \simeq \PP^1$, which extends uniquely to a morphism. (Intuitively, above singular points, one may calculate $t$ by a limit.)
\end{defn}

So far all our definitions have only made use of the intrinsic structure of lines in $\PP^2$ with a marked line at infinity. To define reflection, we need to make a choice of quadratic form. We work with the quadratic form $q_0^2 + q_1^2$.

The \emph{space of unit tangent vectors} is the affine quadric cut out by the equation by $q_0^2 + q_1^2 = 1$. We compactify it to the projective plane quadric $D \subset Q$ defined by
$$D : \; Q_0^2 + Q_1^2 = Q_2^2.$$
Since $\charac k \neq 2$, the quadric $D$ is irreducible and reduced. 

The points of $D$ shall be called \emph{directions}, since they ultimately play the role of directions in billiards.

Let $D_\infty = D \cap Q_\infty$. We call these points the \emph{isotropic directions.} Let $i$ be a square root of $-1$ in $k$; then,
$$D_\infty = \{[1 : i : 0], [1 : -i : 0]\}.$$
Viewed as slopes, these points are $\pm i$.

\begin{defn}
Recall that $\charac k \neq 2$. The \emph{slope} of a direction $q \in D$ is its image $[q]$ by the 2-to-1 ramified cover
\begin{align*}
[ \cdot ] : & \; D \to \PP^1, \\
&[Q_0 : Q_1 : Q_2] \mapsto [Q_0 : Q_1].
\end{align*}
\end{defn}

Under the identification $Q_\infty \simeq \PP^1$, the slope map sends a point in $D \smallsetminus D_{\infty}$ to its projective class as a tangent vector. Notice that the isotropic directions are the ramification points of the slope map. The isotropic directions are so named because any tangent vector in $T \A^2$ of slope $\pm i$ is isotropic (self-normal) for the quadratic form $q_0^2 + q_1^2$.

\begin{defn}
The \emph{directed phase space}, or henceforth just \emph{phase space}, is $C \times D$.
\end{defn}

We construct relations on $X \times Q$ as subvarieties of $(X \times Q)^2$. Coordinates on the second factor $X \times Q$ will be written
$$[X'_0 : X'_1 : X'_2], \quad [Q'_0 : Q'_1 : Q'_2].$$

\begin{defn} \label{def_secant_relation}
The \emph{secant relation} $S$ on $X \times Q$
is the Zariski closure in $(X \times Q)^2$ of the locus $\{(x, q, x', q')\}$ determined by
\begin{align*}
&x \neq x', \\
&q = q', \\
&\begin{vmatrix}
X_0 & X'_0 & Q_0 \\
X_1 & X'_1 & Q_1 \\
X_2 & X'_2 & 0 \\
\end{vmatrix} = 0.
\end{align*}
\end{defn}

The secant relation is not a correspondence, because it is not generically finite; it has 1-dimensional and 2-dimensional fibers above points in both the first and second factor. Indeed, the fiber of $S$ above a general choice of $[X_0 : X_1 : X_2]$ and $[Q_0 : Q_1 : Q_2]$ is parametrized by the line through $[X_0 : X_1 : X_2]$ and $[Q_0 : Q_1 : 0]$, which is the line in $\PP^2$ through $[X_0 : X_1 : X_2]$ of slope $[q]$. The 2-dimensional fibers occur (1) when $[Q_0 : Q_1 : Q_2] = [0:0:1]$, and (2) when $X_2 = 0$ and $[X_0 : X_1] = [Q_0 : Q_1]$. In both exceptional cases, the fiber is dimension 2. Thus $S$ is a $5$-dimensional subvariety of the $8$-dimensional ambient space $(X \times Q)^2$.

\begin{defn} \label{def_secant}
    Assume that $C_0$ is irreducible of degree $d \geq 2$. The \emph{secant correspondence} is a rational correspondence $s : C \times D \vdash C \times D$. Its graph $\Gamma_s \subset (C \times D)^2$ is
    defined as follows. Let $S_{C\times Q}$ be the pullback to $(C \times Q)^2$ of the secant relation $S \subset (X \times Q)^2$. Then let 
    \begin{equation} \label{eq_s_def}
        \Gamma_s \colonequals S_{C \times Q}|_{(C \times D)^2}.
    \end{equation}
\end{defn}

\begin{remark}
The following definition is easily seen to be equivalent to Definition \ref{def_secant}. Let $s_+ : C \times D \vdash C \times D$ be the correspondence with graph $\Gamma_{s_+}$ defined by $q = q'$ and $\det(x, x', [Q_0 : Q_1 : 0]) = 0$, as in Definition \ref{def_secant_relation}; this graph contains the diagonal $\Gamma_1$. Then let
$$\Gamma_s = \Gamma_{s_+} - \Gamma_1.$$
\end{remark}

In this paper, we mostly work with dominant correspondences on irreducible varieties. Remark \ref{rem_reduc} explains how to extend the notion of dominant correspondence to reducible domains. Since many billiard tables of interest in the literature are reducible as varieties, i.e. stadia, we here consider such domains.

\begin{defn} \label{def_s_reducible}
    We can extend the definition of the secant correspondence to reducible curves $C_0$ of degree $d \geq 2$, as long as $C_0$ contains no multiple lines, as follows.
    
    If $C_0$ contains no lines, then \eqref{eq_s_def} suffices to define $s$. If $C_0$ contains only lines of multiplicity $1$, then we write $C$ as a union of a line-free component $\tilde{C}$ of degree $d \geq 2$, together with the lines $\ell_1, \ldots, \ell_\nu$ for some $\nu$. Define
    $$\Gamma_{\tilde{s}} \colonequals S_{C \times Q}|_{(\tilde{C} \times D)^2}.$$
    For each $i$, define
    $$\Gamma_{s_i} \colonequals S_{C \times Q}|_{(\tilde{C} \times D) \times (\ell_i \times D)}.$$
    For each distinct pair $(i, j)$, define
    $$\Gamma_{s_{(i, j)}} \colonequals S_{C \times Q}|_{(\ell_i \times D) \times (\ell_j \times D)}.$$
    Then we define
    $$\Gamma_s \colonequals \Gamma_{\tilde{s}} + \sum_i \Gamma_{s_i} + \sum_{i, j: \; i \neq j} \Gamma_{s_{i,j}}.$$
\end{defn}
Note that we cannot naively define the secant correspondence with \eqref{eq_s_def} when $C_0$ contains a line. Indeed, if $C_0$ contains a line $\ell$ of slope $[q]$, then ${S_{C \times Q}|}_{{(C \times D)}^2}$ contains a $2$-dimensional irreducible component $(\ell \times q)^2$ with non-dominant projection to each factor $C \times D$, contravening the definition of dominance for correspondences (Definition \ref{def_rat_corr}). 

We collect some basic properties of the secant correspondence in the following proposition.

\begin{prop}[properties of secant] \label{prop_basic_s} \leavevmode
Let $C_0 \subseteq \PP^2$ be an algebraic curve of degree $d \geq 2$. Let $C \to C_0$ be a resolution of singularities. 
\begin{enumerate}
    \item The secant correspondence is a self-adjoint, $(d-1)$-to-$(d-1)$, dominant rational correspondence 
    $$s: C \times D \vdash C \times D.$$
    For any $(c, q) \in C \times D$ outside $\Ind s$, the image $s(c,q)$ is the multiset of $d-1$ points
    $$s(c,q) = (C \cap \ell \smallsetminus \{c\}) \times q,$$
    where $\ell$ is the line through $c$ of slope $[q]$.
    \item The projection $C \times D \to D$ is $s$-invariant.
    \item \label{prop_basic_s_contracteds} If $C_0$ is irreducible, the contractions of $s$ are of the following form. For each pair $c_\infty \in C_\infty$ and $q \in D$ such that $[q] = t(c_\infty)$, there exists a contraction
        $$(C \times q) \times (c_\infty, q) \subset \Gamma_s.$$    
    \item If $C_0$ is irreducible, the expansions of $s$ are of the following form. For each pair $c_\infty \in C_\infty$ and $q \in D$ such that $[q] = t(c_\infty)$, there exists an expansion
    $$(c_\infty, q) \times (C \times q) \subset \Gamma_s.$$    
    Thus the indeterminacy locus $\Ind_s \subset C \times D$ is the set
    $$\{ (c_\infty, q ) : c_\infty \in C_\infty, \; [q] = t(c)\}.$$
\end{enumerate}
\end{prop}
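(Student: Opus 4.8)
The plan is to treat the five assertions in turn, deriving each from the explicit equations of the secant relation $S$ in Definition~\ref{def_secant_relation} together with Bézout's theorem. For part (1), I would first record the symmetry of $S$: the involution of $(X \times Q)^2$ swapping the two factors $(x,q) \leftrightarrow (x',q')$ fixes each of the three defining conditions $x \neq x'$, $q = q'$, and the vanishing of the $3\times 3$ determinant, the last because interchanging the first two columns only changes the sign of the determinant. This symmetry is inherited by the pullback $S_{C \times Q}$ and by its restriction $\Gamma_s$ to $(C \times D)^2$, so $s$ is self-adjoint. To compute the generic fibers, I fix a general $(c,q) \in C \times D$ and describe $\pi_1^{-1}(c,q)$: by the relation it consists of the points $(x',q)$ with $x' \in C$, $x' \neq c$, and $x'$ lying on the line $\ell$ through the image of $c$ in $\PP^2$ of direction $[q]$. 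Since $\deg C_0 = d$, Bézout gives $\ell \cap C_0$ of length $d$; one point is accounted for by $c$, and for general $(c,q)$ the remaining $d-1$ are reduced and lie in the smooth locus, so they pull back to $d-1$ distinct points of $C$. This establishes the image formula $s(c,q) = (C \cap \ell \smallsetminus\{c\}) \times q$, shows $\pi_1$ (hence by self-adjointness $\pi_2$) is generically finite of degree $d-1$, and in particular that $s$ is dominant and $(d-1)$-to-$(d-1)$. Part (2) is then immediate: the condition $q = q'$ holds on all of $S$, hence on $\Gamma_s$, so the $D$-coordinate is preserved and the projection $C \times D \to D$ is $s$-invariant.

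For parts (3) and (4), I would argue geometrically. The key observation is that all affine lines of a fixed direction $[q]$ share the same point at infinity, namely $c_\infty = [Q_0 : Q_1 : 0]$ with $[Q_0:Q_1] = [q]$. Thus if $c_\infty \in C_\infty$ and $[q] = t(c_\infty)$, then for every finite $c \in C$ the line $\ell$ through $c$ of direction $[q]$ passes through $c_\infty$, so $(c_\infty, q) \in s(c,q)$; letting $c$ range over $C$ with $q$ fixed exhibits the curve $(C \times q) \times (c_\infty, q) \subset \Gamma_s$, which is a contraction since its $\pi_1$-image is the curve $C \times q$ and its $\pi_2$-image is the point $(c_\infty, q)$. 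To see these are all the contractions, suppose $V$ is a contraction with $\pi_2(V) = (c', q')$. By part (2) the $D$-coordinate is constant equal to $q'$ along $V$, so $\pi_1(V) \subseteq C \times q'$, and every $c$ in this curve has $c'$ on the line through $c$ of direction $[q']$. If $c'$ were finite, this would force $c$ onto the single line through $c'$ of direction $[q']$, which meets $C$ in only finitely many points, contradicting that $\pi_1(V)$ is a curve. Hence $c'$ lies at infinity, so $c' = c_\infty \in C_\infty$ with $[q'] = t(c_\infty)$, giving the stated classification. The expansions are then the adjoints of the contractions by self-adjointness, hence of the form $(c_\infty, q) \times (C \times q)$; and since $\pi_1(W) \in \Ind s$ for each expansion $W$ while conversely each indeterminacy point is the source of an expansion, the indeterminacy locus is exactly $\{(c,q) : c \in C_\infty,\ [q] = t(c)\}$.

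The main obstacle I anticipate is the exhaustiveness in parts (3)--(4), namely verifying that no further contractions or expansions are hidden in the Zariski closure $\Gamma_s$, together with the bookkeeping needed to pass between $C_0$ and its desingularization $C$. Care is required where $C_0$ is singular at infinity or where $\ell$ is tangent to $C_0$: there the generic fiber count via Bézout must be supplemented by a multiplicity analysis to confirm that the fiber dimension does not jump except along the loci identified above, and that the directions $t(c_\infty)$ are correctly read off from the images of the points of $C_\infty$ in $\PP^2$. The remaining verifications, namely the symmetry, the direction invariance, and the generic image formula, are routine once the equations are in hand.
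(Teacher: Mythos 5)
Your proposal is correct and follows essentially the same route as the paper: self-adjointness from the symmetry of the defining determinant, the generic fiber count via B\'ezout for the degree and image formula, classification of the loci with one-dimensional fibers (finite points versus the point at infinity shared by all lines of a given direction) to exhaust the contractions, and self-adjointness to convert contractions into expansions. The only cosmetic difference is organizational: the paper uses the same ``positive-dimensional fibers occur only over the scratch locus'' observation once more inside part (1) to rule out components of $\Gamma_s$ with non-dominant projections, whereas you fold that verification into your argument for parts (3)--(4).
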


\begin{proof}
\par(1) We give the argument if $C$ is reduced and irreducible. The general case is similar.

Counting the defining equations of $\Gamma_s$, we see that each irreducible component of $\Gamma_s$ has dimension at least $2$. Since $q = q'$ on $\Gamma_s$, the fibers of the first projection $\pi_1: \Gamma_s \to C \times D$ are all $0$-dimensional or $1$-dimensional. The fibers are generically $0$-dimensional. To see this, let $(c, q)$ be a general point of $C \times D$; in particular, assume that $c$ is not in $C_\infty$ and that $[q] \neq t(c)$. The fiber $\pi_1^{-1}(c, q) \subset X \times Q$ is parametrized by the line $\ell$ of slope $[q]$ through $c$. Since $C$ does not contain $\ell$ by assumption, the line $\ell$ has $d - 1$ intersection points (with multiplicity) with $C$ besides $c$ itself, by B\'ezout's theorem, so $\pi_1^{-1}(c, q) \cap C \times D$ contains $d - 1$ points with multiplicity. Since the generic fiber is $0$-dimensional and the maximal fiber dimension is $1$, we have $\dim \Gamma_s = 2$.

Now, if $\Gamma_s$ were not dominant, there would be an irreducible component $\Gamma$ of $\Gamma_s$ with non-dominant projection to both factors. This could only occur if there were a $1$-dimensional set in $C \times D$ with $1$-dimensional $\pi_1$-fibers. But since $C$ contains no lines, the set with $1$-dimensional $\pi_1$-fibers consists only of $(c, q)$ such that $c \in C_\infty$ and $[q] = t(c)$. Thus $\Gamma_s$ is dominant, and since each component has dominant projections, the projections are generically finite by a dimension count. This shows that $\Gamma_s$ is a dominant rational correspondence. Since $S$ is a symmetric relation, so is $s$, proving self-adjointness. 

Finally, the fact that $s$ is $(d-1)$-to-$(d-1)$ follows from B\'ezout's theorem, as above.
\par(2) Immediate.
\par(3) The existence of such curves in $\Gamma_s$ is immediate from Definition \ref{def_secant}; they are contractions since $C_0$ is irreducible. To see that these are all the contractions, let $(c, q) \in C \times D$. If $(c', q') \in C \times D$ is not of the form $c' = c_\infty$ and $[q'] = t(c')$, then the fiber of $\Gamma_S$ above $\pi_{2}^{-1}(c',q')$ is parametrized by the unique line of slope $[q']$ through $c'$. By assumption $C_0$ contains no lines, so there are finitely many intersection points by B\'ezout's theorem.
\par(4) Immediate from (1) and (3), since expansions are contractions of the adjoint correspondence.
\end{proof}

Now we define a second correspondence $r$ on $C \times D$ that carries out reflection. The construction is closely related to that of $s$, but with some minor differences.

\begin{defn}
Assume that $C$ and does not contain the line at infinity. Then each point $c \in C$ has a \emph{normal slope}
$$n(c) = [n(c)_0 : n(c)_1] \in \PP^1,$$
defined by
$$[n(c)_0 : n(c)_1] \colonequals [-t(c)_1 : t(c)_0].$$
It is so named because any vector $(v_1, v_2) \in k^2$ pairs to $0$ with $(-v_2, v_1)$ via the bilinear pairing associated to the quadratic form $q_0^2 + q_1^2$ defining $D$.

An \emph{isotropic tangency point} of $C$ is a point $c \in C$ such that $t(c)$ is an isotropic slope $\pm i$. These are exactly the points for which $t(c) = n(c)$.
\end{defn}

\begin{defn} \label{def_reflection_relation}
Assume that $C$ is separable and does not contain the line at infinity. The \emph{reflection relation}
$$R_{C \times Q} : C \times Q \vdash C \times Q$$
is the Zariski closure of the locus of $(c, q, c', q') \in (C \times Q)^2$ determined by
\begin{align*}
&c = c',\\
&q \neq q',\\
&\begin{vmatrix}
Q_0 & Q'_0 & n(c)_0 \\
Q_1 & Q'_1 & n(c)_1 \\
Q_2 & Q'_2 & 0 \\
\end{vmatrix} = 0.
\end{align*}
\end{defn}
Notice that, unlike $S$, there is no reasonable definition of ``reflection'' on $X \times Q$ without reference to the curve $C$, so we start with $R_{C \times Q}$ instead.

In parallel with Definition \ref{def_secant_relation}, the reflection relation is not a correspondence, since it has $1$-dimensional and $2$-dimensional fibers above points in both the first and second factor.
Over a general point $(c, q)$, the fiber of $R$ is parametrized by a line in $Q$, a $1$-dimensional fiber.
The $2$-dimensional fibers occur when $[Q_0 : Q_1 : Q_2] = [n(c)_0 : n(c)_1 : 0]$, in which case the fiber is 2-dimensional. Thus $R_C$ is a 4-dimensional subvariety of the 6-dimensional space $(C \times Q)^2$.

\begin{defn} \label{def_reflection}
    Recall that $\charac k \neq 2$. Let $C_0$ be an algebraic plane curve of degree $d \geq 1$ that does not contain the line at infinity or any affine line with an isotropic slope.
    
    The \emph{reflection correspondence} is a rational correspondence $r: C \times D \vdash C \times D$. Its graph $\Gamma_r \subset (C \times D)^2$ is defined as
    $$\Gamma_r \colonequals R_{C \times Q}|_{(C \times D)^2}.$$
\end{defn}

\begin{remark}
The following definition is easily seen to be equivalent to Definition \ref{def_reflection}. Let $r_+ : C \times D \vdash C \times D$ be the correspondence with graph $\Gamma_{r_+}$ defined by $x = x'$ and $\det(q, q', [n(c)_0 : n(c)_1 : 0]) = 0$, as in Definition \ref{def_reflection_relation}; this graph contains the diagonal $\Gamma_1$. Then let
$$\Gamma_r = \Gamma_{r_+} - \Gamma_1.$$
\end{remark}

We collect basic properties of the reflection correspondence $r$ in the following proposition. Notice the near-duality with the properties of $s$ in Proposition \ref{prop_basic_s}.
\begin{prop}[properties of reflection] \label{prop_basic_r} Let $C_0$ be an algebraic plane curve as in Definition \ref{def_reflection}, with desingularization $C$.
\begin{enumerate}
\item \label{prop_basic_r_bir} The reflection correspondence $r$ is a birational involution, i.e. a self-adjoint, $1$-to-$1$, dominant rational correspondence
$$r : C \times D \dashrightarrow C \times D.$$
For any $(c, q) \in C \times D$ outside $\Ind r$, we have
$$r(c,q) = (c, q'),$$
where, writing $\ell$ for the line in $Q$ through $q$ of slope $[n(c)]$, we have 
$$q' = D \cap \ell \smallsetminus \{q\} \quad \text{(with multiplicity)}.$$
\item \label{prop_basic_r_invar} The projection $C \times D \to C$ is $r$-invariant.
\item \label{prop_basic_r_ind} 
The contractions of $r$ are of the following form. For each isotropic tangency point $c \in C$ of tangent slope $q_\infty \in D_\infty$, there exists a contraction
$$(c \times D) \times (c, q_\infty) \subset \Gamma_r.$$
\item \label{prop_basic_r_contracted} 
The expansions of $r$ are of the following form. For each isotropic tangency point $c \in C$ of tangent direction $q_\infty \in D_\infty$, there exists an expansion
$$(c, q_\infty) \times (c, D) \subset \Gamma_r.$$
\item
The birational map $r$ exchanges the isotropic directions, in the sense that for each $I \in D_\infty$, we have a curve
$$\{(c, I, c, -I) : c \in C\} \subset \Gamma_r.$$
\label{prop_basic_r_isot}
\end{enumerate}
\end{prop}
\begin{proof}
We follow the proof of Proposition \ref{prop_basic_s}.

\par(1) We claim that $r$ restricts to a self-adjoint, dominant $1$-to-$1$ correspondence on each $C_{(j)} \times D$, where $C_{(j)}$ is an irreducible component of $C$. Then it follows that $r$ is a self-adjoint, dominant $1$-to-$1$ correspondence on $C \times D$. The only difference is in showing that there is no irreducible component $\Gamma$ of $\Gamma_r$ with non-dominant projection to both factors. Let $\pi_1: \Gamma_r \to C_{(j)} \times D$ be the first projection. The subset of $C_{(j)}$ with $1$-dimensional $\pi_1$-fibers consists only of $(c,q)$ such that $q \in D_\infty$ and $[q] = t(c)$ or $c \in C_\infty$. Since there are no isotropic lines in $C$ and the line at infinity is not in $C$, the set of all such $(c, q)$ is $0$-dimensional.

\par(2) Immediate.

\par(3) The existence of such curves in $\Gamma_r$ is immediate from Definition \ref{def_reflection}; they are contractions since $D$ is irreducible. To see that these are all the contractions, let $(c,q) \in C \times D$. If $(c', q') \in C \times D$ is not of the form $q' \in D_\infty$ and $[q'] = t(c) = n(c)$, then the fiber of $\Gamma_R$ above $\pi_2^{-1}(c',q')$ is parametrized by the unique line in $Q$ of slope $[n(c)]$ through $q$. Since $D$ is an irreducible conic, there are finitely many intersections.

\par(4) Immediate from (1) and (3), since expansions are contractions of the adjoint correspondence.

\par(5) Immediate from the definitions of $R$ and $r$.
\end{proof}

\begin{remark}
    The secant and reflection correspondences have indeterminacy points that sit inside contracted curves; in Birkett's language, they are \emph{tangled} \cite{birkett2022stabilisation}.
\end{remark}

\subsection{The billiards correspondence}

\begin{defn} \label{def_b}
An \emph{algebraic billiard curve} is a plane curve $C_0 \subset \PP^2 = X$ of degree $d \geq 2$, possibly singular and reducible, not containing any multiple lines, any isotropic line, or the line at infinity. Let $C$ be a resolution of singularities of $C_0$. Let $s$ be the secant correspondence and let $r$ be the reflection correspondence, as just defined. The \emph{billiards correspondence} $b$ is the dominant rational correspondence
\begin{align*}
b &: C \times D \vdash C \times D,\\
b &= r \circ s.
\end{align*}
Note that the possibility of composing $r$ and $s$ is justified by the fact that these are dominant correspondences (Proposition \ref{prop_basic_s}, Proposition \ref{prop_basic_r}).
\end{defn}

The basic properties of algebraic billiards, and its relationship to billiards in other contexts, are summarized in the following proposition.
\begin{prop} \label{prop_b_basic}
Let $C_0 \subset \PP^2$ be an algebraic billiard curve, and let $C \to C_0$ be a desingularization. Let $b : C \times D \vdash C \times D$ be the billiards correspondence.
\begin{enumerate}
    \item The correspondence $b$ is $(d-1)$-to-$(d-1)$.
    \item The correspondence $b$ is reversible, in the sense that it is birationally conjugate via $r$ to its own adjoint.
    \item There is a generically 2-to-1 semiconjugacy from $b$ to a correspondence 
    $$\bar{b} : C \times \PP^1 \vdash C \times \PP^1.$$
    When $k = \C$, the correspondence $\bar{b}$ agrees with complex billiards \cite[Remark 1.6]{MR3236494}.
    \item \label{it_real_inside_complex} Suppose $k = \C$ and $T$ is a finite union of segments $T_1, \ldots, T_n$ of real curves in $\R^2$. Let $b_T$ be the classical billiards map (partially defined) on $T \times S^1$. Suppose that the Zariski closure $\bar{T}$ of $T$ in $\PP^2$ is an algebraic curve. Then $\bar{T}$ is an algebraic billiard curve. Taking $C_0 = \bar{T}$, then 
    $$\Gamma_{b_T} \subset \Gamma_b.$$
    If $C_0$ is smooth and irreducible, the graph $\Gamma_b$ is the Zariski closure of $\Gamma_{b_T}$ in $C \times D$. Thus Definition \ref{def_intro_b} is a special case of Definition \ref{def_b}. 
\end{enumerate}
\end{prop}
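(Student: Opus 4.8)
The four parts are largely independent, and I would prove them in order, leaning on the structural results already established for $s$ and $r$. For part (1), the plan is to use that topological degrees multiply under composition of dominant correspondences. By Proposition \ref{prop_basic_s}, $s$ is $(d-1)$-to-$(d-1)$, and by Proposition \ref{prop_basic_r}, $r$ is $1$-to-$1$; since $r$ is birational, the generic forward fiber of $b = r \circ s$ is $r(s(c,q))$, which has exactly $d-1$ points for general $(c,q)$, and dually for the backward fiber. Concretely I would verify that over a general point the fiber product $\Gamma_s^0 \times_{C \times D} \Gamma_r^0$ of Definition \ref{def_composite} is reduced of length $d-1$, so that $\pi_1$ and $\pi_2$ on $\Gamma_b$ both have degree $d-1$. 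Part (2) is then a formal computation with adjoints: since the adjoint of a composite reverses order, $b^{\mathrm{adj}} = (r \circ s)^{\mathrm{adj}} = s^{\mathrm{adj}} \circ r^{\mathrm{adj}} = s \circ r$, using self-adjointness of $s$ and $r$. As $r$ is an involution, birationally $r^{-1} = r$, so $r \circ b \circ r^{-1} = r \circ r \circ s \circ r = s \circ r = b^{\mathrm{adj}}$. The only subtlety is that composition of correspondences is not strictly associative in the presence of contractions, so I would phrase these identities as equalities away from the exceptional loci, which suffices for a birational-conjugacy claim.

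For part (3), I would first descend $s$ and $r$ along the double cover $\rho = \mathrm{id}_C \times [\cdot] : C \times D \to C \times \PP^1$. The secant correspondence fixes the $D$-coordinate (Proposition \ref{prop_basic_s}(2)), so it descends to a correspondence $\bar s$ recording only the slope $[q] \in \PP^1$; the reflection of a slope across the tangent at $c$ depends only on $[q]$ and $n(c)$, so $r$ descends to a single-valued $\bar r$. Setting $\bar b = \bar r \circ \bar s$, the relation $\rho \circ b = \bar b \circ \rho$ holds generically, giving the semiconjugacy, which is $2$-to-$1$ because $\deg(D \to \PP^1) = 2$. Matching $\bar b$ with Glutsyuk's complex billiards is then a direct comparison of the two defining recipes---intersect a line of given slope with $C$, then reflect the slope across the tangent---for which I would cite \cite[Remark 1.6]{MR3236494}.

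The real obstacle is part (4). First I would check that $\bar T$ is a nondegenerate curve: separability is automatic over $\C$, and since each $T_i$ is a segment of a real curve, its Zariski closure has real slope, so $\bar T$ contains no isotropic line and does not contain the line at infinity, while reducedness along each real component rules out multiple lines. The containment $\Gamma_{b_T} \subset \Gamma_b$ would then be established pointwise: for a real bounce $(c,q) \mapsto (c', q')$, the point $c'$ is a complex intersection of the line through $c$ of direction $q$ with $C$, so $((c,q),(c',q)) \in \Gamma_s$; and $q'$ is the Euclidean reflection of $q$ across the tangent at $c'$, which is precisely the nontrivial root of the reflection relation and, crucially, lies in the real locus $D(\R)$ because real reflection preserves $q_0^2 + q_1^2 = 1$. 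Hence $((c',q),(c',q')) \in \Gamma_r$ and $((c,q),(c',q')) \in \Gamma_{r \circ s} = \Gamma_b$. The point of using the double cover $D$ rather than $\PP^1$ is exactly that the real orientation of the outgoing ray is remembered by the $Q_2$-coordinate, pinning down one of the two preimages under $\rho$.

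Finally, when $C_0$ is smooth and irreducible, $\Gamma_b$ is an irreducible complex surface, and $\Gamma_{b_T}$ is a real-two-dimensional real-analytic piece of its real locus; I would conclude by the standard fact that the real points of a geometrically irreducible real variety of full real dimension are Zariski dense, so that the Zariski closure of $\Gamma_{b_T}$ is all of $\Gamma_b$. The delicate points are the orientation bookkeeping in the reflection step and verifying that the real arc is genuinely Zariski dense in the relevant component; I expect these, rather than the formal parts (1)--(3), to require the most care.
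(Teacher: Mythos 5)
Parts (1)--(3) of your proposal follow the paper's proof essentially verbatim: degrees multiply under composition, the adjoint computation $b^{\mathrm{adj}} = s \circ r$ conjugated by the involution $r$, and descent along the double cover $C \times D \to C \times \PP^1$ (the paper phrases this via the deck involution $\mu(c,[Q_0:Q_1:Q_2]) = (c,[-Q_0:-Q_1:Q_2])$, with which $b$ commutes, but it is the same argument). The first two claims of part (4) also match: nondegeneracy of $\bar T$, and the pointwise check that a real bounce $((c,q),(c',q'))$ factors through $\Gamma_s$ and $\Gamma_r$.

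The gap is in the last claim of (4). You assert that ``when $C_0$ is smooth and irreducible, $\Gamma_b$ is an irreducible complex surface'' and proceed from there---but this irreducibility is precisely the nontrivial point, and it is where the paper spends most of its effort. A priori $\Gamma_s$ (hence $\Gamma_b$) could break into several $2$-dimensional components, in which case the Zariski closure of the real graph $\Gamma_{b_T}$ could be a proper union of components, and your density argument would only identify that closure with one such piece. The paper proves irreducibility by reducing to $\Gamma_s$ (legitimate since $r$ is birational) and then showing that the fibers of the projection $\psi: \Gamma_s \to D$ over a Zariski-open set of directions $q$ are irreducible: the fiber over $q$ is governed by the monodromy of the degree-$d$ projection $\phi_q : C \to \PP^1$ from the center $[Q_0:Q_1:0]$, and for general $q$ this cover has only simple ramification over distinct branch points, so its monodromy is the full symmetric group, hence $2$-transitive, which is exactly what irreducibility of the residual-intersection fiber requires. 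Some such argument (uniform position, or this monodromy computation) must be supplied; it does not follow from smoothness and irreducibility of $C_0$ by general nonsense. By contrast, your concluding step---that a $2$-real-dimensional set of real points of the $\R$-variety $\Gamma_b$ cannot lie in a complex curve, since the real locus of a curve defined over $\R$ has real dimension at most $1$---is correct and is a legitimate, arguably cleaner, alternative to the paper's argument, which instead projects a putative curve $W \supseteq \Gamma_{b_T}$ to $C \times D$ and notes that its image would have to contain $T \times S^1$, whose Zariski closure is $2$-dimensional.
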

\begin{proof}
    We use Proposition \ref{prop_basic_s} and Proposition \ref{prop_basic_r}.
    
    \par(1)\enspace Since $s$ is $(d-1)$-to-$(d-1)$ and $r$ is $1$-to-$1$, their composite is $(d-1)$-to-$(d-1)$.
    \par(2)\enspace  Let the adjoints of $b, s, r$ be denoted $b^{\adj}, s^{\adj}, r^{\adj}$. We have
    \begin{align*}
        r^{-1} \circ b \circ r &= r^{-1} \circ r \circ s \circ r & \text{(definition of $b$)} \\
        &= s \circ r & \text{(since $r$ is a birational map)} \\
        &= s^{\adj} \circ r^{\adj} & \text{(self-adjointness of $s$ and $r$)} \\
        &= b^{\adj} & \text{(definition of $b$)}.
    \end{align*}

    \par(3)\enspace  Let 
    $$\mu : C \times D \to C \times D,$$
    $$ (c, [Q_0 : Q_1 : Q_2]) \mapsto (c, [-Q_0 : -Q_1 : Q_2]).$$
    Then $b$ commutes with $\mu$. The quotient of $D$ by the involution $[Q_0 : Q_1 : Q_2] \mapsto [-Q_0 : -Q_1 : Q_2]$ is the map $D \to \PP^1$ defined by $q \mapsto [q]$, so the quotient of $C \times D$ by $\mu$ is isomorphic to $C \times \PP^1$. Since $b$ commutes with $\mu$, it descends to a correspondence $\bar{b}$.

    We check the second claim on a general subset of the domains. The definition of complex billiards as a correspondence in \cite[Remark 1.6]{MR3236494} is as follows. Given a line $\ell$ of non-isotropic slope in $\C^2$, the reflection across that line is the unique complex isometry (for $dx_0^2 + dx_1^2$) fixing $L$. As an action on the space $\PP^1$ of slopes, reflecting in a line $L$ of non-isotropic slope $[q]$ is equivalent to applying the unique involution on $\PP^1$ that fixes $[q]$ and $[n(q)]$. Given a general point $c \in C$ and $[q] \in \PP^1$, the images of $(c, [q])$ are those points $(c', [q'])$ such that $c' \in C$ is on the line $\ell$ of slope $[q]$ through $c$, and $[q']$ is the complex reflection of $[q]$ across $L = T_{c'} C$. Since $\bar{b}$ also fixes $[q]$ and $[n(q)]$ on $\PP^1$, the two definitions agree.
    
    \par(4)\enspace  It is clear that $\bar{T}$ contains no multiple lines since it is a set-theoretic Zariski closure of real curves. Since $T$ is a subset of a curve in $\R^2$, we know that $\bar{T}$ contains no lines at infinity or isotropic lines. So $\bar{T}$ is an algebraic billiard table and Definition \ref{def_b} is applicable.
    
    Consider any pair $(c, q) \in T \times S^1$ such that $b_T(c, q)$ is defined. The point $(c', q') \colonequals b_T(c,q)$ has the property $c \neq c'$, and clearly $(c, q, c', q) \in \Gamma_s$, so $(c, q, c', q') \in \Gamma_b$. Thus $b_T(c,q)$ is in the multiset $b(c,q)$, so $\Gamma_{b_T} \subseteq \Gamma_b$.

    Now suppose further that $C_0$ is smooth and irreducible, and of degree $d$. We claim that $\Gamma_b$ is irreducible. Since $r$ is birational, it suffices to show that $\Gamma_s$ is irreducible.
    
    To see this, consider the projection $\psi: \Gamma_s \to D$. We claim that over a Zariski open subset $U$ of $D$, the fibers of $\psi$ are irreducible curves. A map of varieties with irreducible codomain and equidimensional irreducible fibers must have an irreducible domain, so this shows that the subset of $\Gamma_s$ above $U$ is irreducible. Since $\Gamma_s$ is a dominant correspondence, it is equal to the Zariski closure of its restriction over $U$.
    
    Let us check that $\psi$ has irreducible fibers. Given any $q = [Q_0 : Q_1 : Q_2] \in D$, let $\phi_q: C \to \PP^1$ be the map induced on $C$ by the projection $\PP^2 \to \PP^1$ with center $[Q_0 : Q_1 : 0]$.
    Let $U$ be the set of $q \in D$ such that $\phi_q$ is of degree $d$ and all ramification points are simple and branched over distinct points in $\PP^1$. Then $U$ is a Zariski open set of $D$. 
    
    The curve $\psi^{-1}(q)$ is irreducible if and only if the monodromy of the cover $\phi_q$ is 2-transitive. This monodromy is the symmetric group on $d$ letters because the branching of the cover is simple and distict. Since $d > 1$, the monodromy is 2-transitive.

    Now, observe that $\Gamma_{b_T}$ is a 2-real-dimensional subset of the irreducible variety $\Gamma_b$. If its Zariski closure were not all of $\Gamma_b$, then it would be contained in a complex curve $W \subset \Gamma_b$. Then the image of $W$ in $C \times D$ would be an algebraic curve containing $T \times S^1$. But the Zariski closure of $T \times S^1$ is complex 2-dimensional.
\end{proof}

\begin{remark}
    The reflection map is defined with reference to the standard quadratic form $q_0^2 + q_1^2$. Because we are working over an algebraically closed field of characteristic not $2$, any other choice of quadratic form can be transformed to $q_0^2 + q_1^2$ by a change of variables over $k$. (However, such coordinate changes act simultaneously on $C$ and $D$.) By choosing $q_0^2 + q_1^2$, we extend the standard Euclidean billiard on the real locus, whereas other choices extend pseudo-Euclidean billiards \cite{MR2518642}.
\end{remark}

\begin{remark} \label{rem_near_duality}
    The definitions of $r$ and $s$ are in fact completely symmetric. (If we consider the standard metric on the tangent space, and use it to define a normal, then $q = n(q)$ on $D$.) Some of our arguments ought to have interesting generalizations to billiards-like correspondences on appropriate pairs of curves $C$ and $D$, in which case we expect there would be some similarities to Finsler billiards \cite{MR1866992}. Since our motivation is classical billiards, in this article we only consider the standard conic $D$.
\end{remark}

\section{The invariant form} \label{sect_invariant_form}
Real billiards preserves a nonvanishing $2$-form, the Liouville form. In this section, we introduce an analogous invariant form for algebraic billiards, and we calculate its divisor (its zeros and poles).

As before, let $C_0$ be a plane curve satisfying the conditions of Definition \ref{def_b}, and let $C \to C_0$ be a desingularization.

\begin{defn} \label{def_invt_form}
    Let $f: V \vdash V$ be a dominant correspondence, defined by $(\Gamma_f, \pi_1, \pi_2)$. A differential form $\eta$ with rational coefficients is said to be \emph{$f$-invariant} if $\pi_1^* \eta = \pi_2^* \eta$.
\end{defn}

\begin{thm}[= Theorem \ref{thm_main_invariant_form}] \label{thm_invariant_form}
Let $\omega$ be the rational $2$-form on $X \times Q$ defined on $T \A^2 \subset X \times Q$ by the formula
$$\omega \colonequals dx_0 \wedge dq_0 + dx_1 \wedge dq_1.$$
We may pull back $\omega$ to a form $\omega|_{C \times D}$ defined on $C \times D$, via $C \to C_0 \subset X$ and $D \subset Q$. The $2$-form $\omega|_{C \times D}$ on $C \times D$ is invariant for the billiards correspondence $b$, the secant correspondence $s$, and the reflection correspondence $r$.
\end{thm}

Before proving the theorem, we comment on the naturality of this construction. The choice of coordinates $x_0, x_1$ on $\A^2$ induces cotangent vectors $q_0 = dx_0, q_1 = dx_1$. Define dual coordinates $\xi_0, \xi_1 : T^* \A^2 \to k$, by linearly extending the rules
$$\xi_0(q_0) = 1, \quad \xi_0(q_1) = 0,$$
$$\xi_1(q_0) = 0, \quad \xi_1(q_1) = 1.$$
The differential $2$-form $dx_0 \wedge d\xi_0 + dx_1 \wedge d\xi_1$ is a well-known object called the \emph{canonical form} on $T^* \A^2$. It is independent of the initial coordinate choice $x_0, x_1$; see \cite[Section 2.2]{MR1853077}.

The canonical form is defined on a cotangent space, whereas algebraic billiards is defined on a tangent space; but using the quadratic form $q_0^2 + q_1^2$, we can view the canonical form as a form on $T\A^2$. (Since $\charac k \neq 2$, any choice of non-degenerate quadratic form gives a symmetric bilinear pairing on vectors, hence an identification of covectors with vectors.) Our choice of $q_0^2 + q_1^2$ identifies the vector $(q_0, q_1)$ with the covector $(\xi_0, \xi_1)$ and hence identifies the canonical form with $\omega$.

We prove Theorem \ref{thm_invariant_form} by elementary computations in the sheaf of differential forms. However, The idea behind the computation is, in some sense, the symplectic reduction argument of Khesin-Tabachnikov for pseudo-Riemannian billiards \cite[Section 3.3]{MR2518642}. They credit the underlying principle to Melrose \cite{MR436225, MR614395}.

\begin{proof}[Proof of Theorem \ref{thm_invariant_form}]
    Since $b = r \circ s$, it suffices to prove that $\omega|_{C \times D}$ is $s$-invariant and $r$-invariant. Further, we need only check the claim on a Zariski dense subset of $C \times D$. The definition of algebraic billiard table (Definition \ref{def_b}) excludes the possibility that the line at infinity is an irreducible component of $C$, and also excludes the possibility of a component of $C$ being an isotropic line. So we can just prove the claim on a Zariski dense subset of $C \times D$ above $T \A^2$. Thus we may work in affine coordinates. Our coordinates on $(T \A^2)^2$ are $(x_0, x_1, q_0, q_1)$ and $(x'_0, x'_1, q'_0, q'_1)$. 
    
    \par(1) Proof of $s$-invariance. Let $S$ be the relation of Definition \ref{def_secant_relation}, and let $S|_{X \times D}$ be its restriction to $(X \times D)^2$. We claim that $\omega|_{X \times D}$ is $S|_{X \times D}$-invariant; it follows that $\omega|_{C \times D}$ is $s$-invariant, by restriction.

    Let 
    $$\pi_1, \pi_2 : \Gamma_{S|_{X \times D}} \to X \times D$$
    be the two projections, and define
    $$\tilde{\omega} = \pi_1^* \omega - \pi_2^* \omega.$$
    We claim that $\tilde{\omega} = 0$.

    For notational convenience, identify $S|_{X \times D}$ with its graph. On $S|_{X \times D}$, the following affine equations hold:
    \begin{eqnarray}
    q_0^2 + q_1^2 = 1, \\
    q_0 = q'_0, \\
    q_1 = q'_1, \\
    \begin{vmatrix}
    x_0 & x'_0 & q_0 \\
    x_1 & x'_1 & q_1 \\
    1 & 1 & 0
    \end{vmatrix} = 0. \label{eq_det_S}
\end{eqnarray}

In coordinates, we have
\begin{align}
    \tilde{\omega} &= dx_0 \wedge dq_0 + dx_1 \wedge dq_1 - dx'_0 \wedge dq'_0 - dx'_1 \wedge dq'_1 & \text{(by definition)} \nonumber \\
    &= dx_0 \wedge dq_0 + dx_1 \wedge dq_1 - dx'_0 \wedge dq_0 - dx'_1 \wedge dq_1 & \text{(since } (q_0, q_1) = (q'_0, q'_1) \text{)} \nonumber \\
    &= (dx_0 - dx'_0) \wedge dq_0 + (dx_1 - dx'_1) \wedge dq_1 & \text{(rearranging).} \label{eq_nice_omega_S}
\end{align}

Taking the exterior derivative of the defining equation of $D$, and remembering that $\charac k \neq 2$, we get
\begin{equation} \label{eq:qdq}
    q_0 dq_0 + q_1 dq_1 = 0.
\end{equation}
Writing out \eqref{eq_det_S}, we have
$$0 = q_0 ( x_1 - x'_1 ) - q_1 (x_0 - x'_0).$$
Taking the exterior derivative gives a relation in the sheaf of differential 1-forms on $S|_{X \times D}$:
\begin{align*}
    0 &= d(q_0 ( x_1 - x'_1 ) - q_1 (x_0 - x'_0)) \\
    &= (x_1 - x'_1) dq_0 + q_0 (dx_1 - dx'_1) - (x_0 - x'_0) dq_1 - q_1 (dx_0 - dx'_0) & \text{(Leibniz rule)}.
\end{align*}
Wedging with $dq_0$ gives the following relation in the sheaf of 2-forms on $S|_{X \times D}$:
\begin{align*}
    0 &= ((x_1 - x'_1) dq_0 + q_0 (dx_1 - dx'_1) \\ & \quad - (x_0 - x'_0) dq_1 - q_1 (dx_0 - dx'_0)) \wedge dq_0  &\\
    &= q_0 (dx_1 - dx'_1) \wedge dq_0 - q_1 (dx_0 - dx'_0) \wedge dq_0 & \text{(2-forms on $D$ are trivial)} \\
    &= -q_1 (dx_1 - dx'_1) \wedge dq_1 - q_1 (dx_0 - dx'_0) \wedge dq_0 & \text{(using \eqref{eq:qdq})} \\
    &=- q_1 ((dx_1 - dx'_1) \wedge dq_1 + (dx_0 - dx'_0) \wedge dq_0) & \text{(factoring)} \\
    &= -q_1 \tilde{\omega} & \text{(by \eqref{eq_nice_omega_S})}.
\end{align*}

Since $-q_1$ is not identically $0$ on $D$, we obtain $0 = \tilde{\omega}$.

\par(2) Proof of $r$-invariance. First, assume $C$ is reduced; the general argument may then be done componentwise, since the claim is local. By a change of coordinates in $O_2(k)$, we may assume that $C$ has no lines of slope $0$. Let $R|_{C \times Q}$ be the relation of Definition \ref{def_reflection_relation}. We claim that $\omega|_{C \times Q}$ is $R|_{C \times Q}$-invariant. Let
$$\pi_1, \pi_2 : \Gamma_{R|_{C \times Q}} \to C \times Q$$
be the two projections, and define
$$\tilde{\omega} = \pi_1^* \omega - \pi_2^* \omega.$$
We claim that $\tilde{\omega} = 0$.

Let $C_0 \subset \PP^2$ be cut out by the affine equation $F(x_0, x_1) = 0$. Then, on $R|_{C \times Q}$, the following affine equations hold.
    \begin{eqnarray}
    F(x_0, x_1) = 0, \label{eq_C} \\
    x_0 = x'_0, \nonumber \\
    x_1 = x'_1, \nonumber \\
    \begin{vmatrix}
    q_0 & q'_0 & n(x)_0 \\
    q_1 & q'_1 & n(x)_1 \\
    1 & 1 & 0
    \end{vmatrix} = 0. \label{eq_det_R}
\end{eqnarray}
In coordinates, we have
\begin{align}
    \tilde{\omega} &= dx_0 \wedge dq_0 + dx_1 \wedge dq_1 - dx'_0 \wedge dq'_0 - dx'_1 \wedge dq'_1 & \text{(by definition)} \nonumber \\
    &= dx_0 \wedge dq_0 + dx_1 \wedge dq_1 + dx_0 \wedge dq'_0 + dx_1 \wedge dq'_1 & \text{(since } (x_0, x_1) = (x'_0, x'_1) \text{)} \nonumber \\
    &= dx_0 \wedge (dq_0 - dq'_0) + dx_1 \wedge (dq_1 - dq'_1) & \text{(rearranging).} \label{eq_nice_omega_R}
\end{align}
Since $C$ is reduced, the exterior derivative of \eqref{eq_C} gives us
\begin{equation}
    n(x)_0 dx_0 + n(x)_1 dx_1 = 0.
    \label{eq_dF}
\end{equation}
Writing out \eqref{eq_det_R}, we have
$$ 0 = n(x)_0 (q_1 - q'_1) - n(x)_1 (q_0 - q'_0).$$
The exterior derivative is
\begin{align*}
    0 &= d(n(x)_0 (q_1 - q'_1) - n(x)_1 (q_0 - q'_0)) \\
    &= d(n(x)_0) (q_1 - q'_1) + n(x)_0 \wedge (dq_1 - dq'_1) - d(n(x)_1) (q_0 - q'_0) - n(x)_1 (dq_0 - dq'_0) & \text{(Leibniz)}.
\end{align*}
Wedging with $dx_0$ gives the following equation:
\begin{align*}
0 &= ( d(n(x)_0) (q_1 - q'_1) + n(x)_0 \wedge (dq_1 - dq'_1) \\
& \quad - d(n(x)_1) (q_0 - q'_0) - n(x)_1 (dq_0 - dq'_0) ) \wedge dx_0 \\
& = n(x)_0 (dq_1 - dq'_1) \wedge dx_0 - n(x)_1 (dq_0 - dq'_0) \wedge dx_0  &  \text{($2$-forms on $C$ are trivial)} \\
& = -n(x)_1 (dq_1 - dq'_1) \wedge dx_1 - n(x)_1 (dq_0 - dq'_0) \wedge dx_0 & \text{(using \eqref{eq_dF})} \\
&= -n(x)_1 \tilde{\omega} & \text{(by \eqref{eq_nice_omega_R})}.
\end{align*}
Recalling that $C_0$ contains no lines of slope $0$, we may divide by $-n(x)_1$, so $0 = \tilde{\omega}$.
\end{proof}

\begin{remark} \label{rem_no_invt_form_on_CP1}
    The secant and reflection correspondences commute with the $2$-to-$1$ map 
    \begin{align*}
        C \times D &\to C \times \PP^1,\\
        (c, q) &\mapsto (c, [q]).
    \end{align*}
    Thus there is a semiconjugacy from $b$, defined in Definition \ref{def_b}, to a more well-studied correspondence $b_{C \times \PP^1}$ on $C \times \PP^1$, as explained in Proposition \ref{prop_b_basic} (3).

    The map $b_{C \times \PP^1}$ generally does not preserve any $2$-form besides $0$. Indeed, suppose by way of contradiction that $\omega_0$ is an invariant $2$-form of $b_{C \times \PP^1}$. We will show that $\omega_0 = 0$.
    
    The pullback of $\omega_0$ to $C \times D$ is preserved by $b$. Since $\omega \neq 0$, the quotient $\omega_0 / \omega$ is a well-defined invariant rational function of $b$, which gives rise to an invariant rational function of $b_{C \times \PP^1}$. Invariant rational functions for billiards do not generally exist, except for constant functions \cite{MR4210728}. Thus $\omega_0 / \omega$ is constant.
    Now, recall the map $\mu$ defined in the proof of Proposition \ref{prop_b_basic}. It is easy to check in affine coordinates that
    $$\mu^* \omega = -\omega.$$
    We also have $\mu^* \omega_0 = \omega_0$, since $\omega_0$ was defined by pulling back a form on the quotient of $C \times D$ by the involution $\mu$. Thus
    \begin{align*}
        \omega_0 / \omega &= \mu^*(\omega_0 / \omega) & \text{(pulling back a constant function)} \\
        &= \mu^*\omega_0 / \mu^* \omega \\
        &= -\omega_0 / \omega.
    \end{align*}
    Since $\charac k \neq 2$, we then have $\omega_0 = 0$.
    
    The situation therefore resembles that of monomial maps and Chebyshev maps on $\PP^1$. Chebyshev maps do not preserve a form, but they are the image by a $2$-to-$1$ semiconjugacy of a monomial map (see e.g. \cite{Silverman10}) preserving the $1$-form $dx/x$.
\end{remark}

\subsection{The divisor of the invariant form} \label{sect_form_divisor}

We denote the divisor of a form $\eta$ by $\dv \eta$. The \emph{Gauss embedding} is defined as 
\begin{align*}
    G & \colon C \to C \times \PP^1,\\
    & c \mapsto (c, t(c)).
\end{align*}
It is an isomorphism onto its image $G(C)$. Define a curve $\hat{G}(C) \subset C \times D$ by
$$\hat{G}(C) = \{ (c, q) : (c, [q]) \in G(C) \}.$$
Then $\hat{G}(C)$ is the fiber product over $\PP^1$ of the tangent slope map $t: C \to \PP^1$ and the slope map $[ \cdot ]: D \to \PP^1$. Note that $\hat{G}(C)$ is a ramified double cover of $C$.

\begin{prop} \label{prop_div}
Let the curve $C$ be general. The divisor of $\omega$ is
\begin{equation}
    \dv \omega = \hat{G}(C) + \sum_{c_i \in C_\infty} (d-3)(c_i \times D) + \sum_{q_j \in D_\infty} (-1) (C \times q_j). \label{eq_div}
\end{equation}
\end{prop}

\begin{proof}
    Suppose $V$ is an irreducible curve not of the form $c_\infty \times D$ or $C \times q_\infty$. We will call any such $V$ a \emph{main affine curve}, since $V$ has an affine equation in the coordinates $x_0, x_1, q_0, q_1$. We break up the divisor as the following sum, for some multiplicities $\mu_i, \nu_j$:
        $$\dv \omega = \sum_{\text{main affine }V} \ord_V \omega + \sum_{c_i \in C_\infty} \mu_i (c_i \times D) + \sum_{q_j \in D_\infty} \nu_j (C \times q_j).$$
    Now we compute terms (1), (2), and (3).
    
    Let the partial derivatives of $F$ with respect to $x_0$ and $x_1$ be denoted $F_{x_0}$, $F_{x_1}$.
    By the defining relations in the sheaf of differential forms, we have
    $$ \omega = \dfrac{F_{x_0} q_0 + F_{x_1} q_1}{F_{x_0} q_0} dx_1 \wedge dq_1.$$
    So
    $$\dv \omega = \dv(F_{x_0} q_0 + F_{x_1} q_1) + \dv \dfrac{dx_1 \wedge dq_1}{F_{x_0} q_0}.$$
    \par(1) $= \hat{G}(C)$. \enspace
    
    We claim that if $V$ is a main affine curve, we have
    $$\ord_V (-\dv(F_{x_0}) - \dv(q_0) + \dv(dx_1 \wedge dq_1)) = 0.$$
    The form $(dx_1 / F_{x_0}) \wedge (dq_1 / q_1)$ is a wedge of a $1$-form on $C$ and a $1$-form on $D$), so its divisor can be computed as
    $$\dv_{C \times D} \left((dx_1 / F_{x_0}) \wedge (dq_1 / q_1)\right) = \dv_C (dx_1 / F_{x_0}) \times D + C \times \dv_D (dq_1 / q_1).$$
    Since $\charac k \neq 2$ and $C$ has simple branching over the $x_1$-line (by generality), for all $c \in C \smallsetminus C_\infty$ and $q \in D \smallsetminus D_\infty$, we have
    $$\ord_c F_{x_0} = \ord_c \dv_C dx_1,$$
    $$\ord_q q_0 = \ord_q \dv_D dq_1.$$
    It follows that
    $$\sum_{\text{main affine }V} \ord_V \omega = \sum_{\text{main affine V}} \ord_V F_{x_0} q_0 + F_{x_1} q_1.$$

    Since $C$ is smooth by generality, the equation $F_{x_0} q_0 + F_{x_1} q_1 = 0$ is a defining affine equation of $\hat{G}(C)$. We claim that $\hat{G}(C)$ is integral; as a consequence, term (1) is just $\hat{G}(C)$.
    
    To see this, recall that $\hat{G}(C)$ is the fiber product over $\PP^1$ of the tangent slope map $t: C \to \PP^1$ and the slope map $[ \cdot ]: D \to \PP^1$. Each ramification point of $t$ is in $C_\infty$ or is an inflection point. Since $C$ is general, we may assume that $\pm i$ are not branch points of $t$. Since $t$ and $[ \cdot ]$ are maps of smooth curves to $\PP^1$ and have distinct branch loci, the fiber product $\hat{G}(C)$ over these maps is smooth. Then $\hat{G}(C) \to C$ is a $2$-to-$1$ cover of smooth curves ramified at isotropic tangency points of $C$, so $\hat{G}(C)$ is irreducible.

    \par(2): For all $i$, we have $\mu_i = d - 3$. To see this, first notice that the order of $F_{x_0} q_0 + F_{x_1} q_1$ along any curve $c_i \times D$ is $0$. Then, using again the split decomposition from (1), we just need to show that on $C$, we have 
    $$\ord_{c_i} (dx_1 / F_{x_0}) = d - 3.$$
    The degree of $dx_1 / F_{x_0}$ on $C$ is $2g(C) - 2 = d(d-3)$, and that form has no zeros or poles on $C$ except possibly on $C_\infty$, as argued in (1). Since $C$ is general, the order of $dx_1 / F_{x_0}$ at each $c_i$ is the same, and there are $d$ of these points, giving an order of $d - 3$ at each.
    
    \par(3): For all $j$, we have $\nu_j = -1$.
    This case is exactly as in (2) with roles of $C$ and $D$ reversed.
\end{proof}

\begin{remark}
If $C$ is the circle $x_0^2 + x_1^2 = 1$, one can show that $\hat{G}(C)$ is a union of two rational curves meeting at two points.
\end{remark}

\section{The Cheap Dynamical Degree Bound} \label{sect_cheap}

For any rational map $\phi$ of projective space, the first dynamical degree satisfies the basic inequality
\begin{equation} \label{eq_dd_ineq_proj}
    \lambda_1(\phi) \leq \deg \phi.
\end{equation}
For correspondences on arbitrary smooth surfaces, the same type of inequality holds. The algebraic degree $\deg \phi$ generalizes to the spectral radius of the action of $\phi$ on the group of divisor classes up to numerical equivalence. In this section, we compute these upper bounds for the billiards correspondence 
$$b : C \times D \vdash C \times D$$
on a general curve $C$, proving a ``cheap'' version of Theorem \ref{thm_main}. Besides being instructive, our arguments here are ingredients for the full proof of Theorem \ref{thm_main} in Section \ref{sect_global_bound}.

\begin{lemma} \label{lemma_rh}
Let $C \subset \PP^2$ be a smooth curve of degree $d \geq 2$, and let $t \in \PP^1$, where $\PP^1$ is identified with the line at infinity. If $t \not\in C$, then $C$ has $d(d-1)$ points with tangent line of slope $t$, counted with multiplicity.
\end{lemma}

\begin{proof}
    The map $w: C \to \PP^1$ defined by the pencil of lines through $t$ is ramified exactly at the tangents of $C$ with slope $t$. Since $t \not\in C$, the map $w$ has degree $d$. By smoothness, the genus of $C$ is $\frac{1}{2}(d-1)(d-2)$. The Riemann-Hurwitz formula then gives $d(d-1)$ ramification points of $w$ counted with multiplicity, corresponding to tangents of slope $t$.
\end{proof}

\begin{thm}[Cheap bound] \label{thm_cheap_upper_bound}
    Let $C$ be a smooth degree $d \geq 2$ curve in $\PP^2$ such that $C_\infty$ does not contain $[1 : i : 0]$ or $[1 : -i : 0]$. Let $b : C \times D \vdash C \times D$ be the billiards correspondence. Then
    $$\lambda_1( b ) \leq d^2 + \sqrt{d^4 - 5d^2 + 4d + 1} < 2d^2.$$
\end{thm}

\begin{proof}
    Choose general points $c_0 \in C$ and $q_0 \in D$, and define divisors 
    $$C^{(0)} = C \times q_0, \quad D^{(0)} = c_0 \times D.$$
    The curve $C^{(0)}$ is smooth, hence irreducible, and $D^{(0)}$ is irreducible since $\charac k \neq 2$. Since $D \cong \PP^1$, the classes of $C^{(0)}$ and $D^{(0)}$ in $\Num^1( C \times D )$ form a basis for $\Num^1( C \times D ) \cong \Z^2$.

    Recall that $b = r \circ s$ by definition, where $s$ is the secant correspondence and $r$ is the reflection correspondence. Let $M_b$, $M_r$, and $M_s$ be the matrices of $b_*$, $r_*$, and $s_*$ in this basis.
    
    From Proposition \ref{prop_basic_s} and Proposition \ref{prop_basic_r}, we have a complete description of the contractions of $s$ and the expansions of $r$. The images of the contractions of $s$ all have $C$-coordinate in $C_\infty$. By the hypothesis that $C_\infty$ does not contain $[1 : \pm i : 0]$, the points of $\Ind r$ have $C$-coordinate outside $C_\infty$. So, by Theorem \ref{thm_dd_props} (\ref{it_functorial}), we have 
    $$b_* = r_* s_*.$$
    Applying Theorem \ref{thm_dd_props} (\ref{it_dd_bound}), we have
    \[
    \lambda_1 (b) \leq \rad M_b .
    \]
    Therefore it suffices to calculate $M_s$ and $M_r$ and to estimate $\rad M_r M_s$.

    The intersection form on $\Num^1 (C \times D)$ has matrix
    \[
    J \colonequals \begin{pmatrix}
        0 & 1 \\
        1 & 0 \\
    \end{pmatrix}.
    \]

    We claim that
    \begin{equation} \label{eq_cheap_s_matrix}
M_s = \begin{pmatrix}
d-1 & 2 \\
0 & d-1 \\
\end{pmatrix}.
    \end{equation}
    Let 
    $$\Pi_1, \Pi_2 : (C \times D)^2 \to C \times D$$
    be the two projections.
    As in \eqref{eq_pfwd_ix}, for any two divisors $\Delta, \Delta'$, we define a quantity $\iota_s(\Delta, \Delta')$ by 
    $$\iota_s(\Delta, \Delta') = s_* \Delta \cdot \Delta' = \Gamma_s \cdot \Pi_1^* \Delta \cdot \Pi_2^* \Delta'.$$
    Define a matrix
    \[
    M'_s \colonequals \begin{pmatrix}
        \iota_s( C^{(0)}, C^{(0)}) & \iota_s( D^{(0)}, C^{(0)}) \\
        \iota_s( C^{(0)}, D^{(0)}) & \iota_s(D^{(0)}, D^{(0)})
    \end{pmatrix}.
    \]
    The dual basis for $C^{(0)}$, $D^{(0)}$ is given by intersecting against $D^{(0)}$, $C^{(0)}$ respectively, so $M_s = J M'_s$.
    So it suffices to compute the entries of $M'_s$. 
    \begin{itemize}[leftmargin=*]
        \item $\iota_s (C^{(0)}, C^{(0)}) = 0$. Let $q_1 \in D$ be general, chosen so that $q_1 \neq q_0$. Since $C^{(0)} = C \times q_0$ and $C^{(1)} \colonequals C \times q_1$ are numerically equivalent, we have
        \begin{align*}
\Gamma_s \cdot \Pi_1^* C^{(0)} \cdot \Pi_2^* C^{(0)} &= \Gamma_s \cdot \Pi_1^* C^{(0)} \cdot \Pi_2^* C^{(1)}.
        \end{align*}
        The latter intersection is empty, since the projections of $\Pi_1^* C^{(0)}$ and $\Pi_2^* C^{(1)}$ to $D$ are disjoint.
        \item $\iota_s (C^{(0)}, D^{(0)}) = d - 1$. Since $\Pi^*_1 C^{(0)}$ and $\Pi^*_2 D^{(0)}$ are smooth, the intersection $\Gamma_s \cdot \Pi_1^* C^{(0)} \cdot \Pi_2^* D^{(0)}$ agrees with the degree of the $0$-dimensional scheme $\Gamma_s|_{C^{(0)} \times D^{(0)}}$. This intersection is the number of pairs $c \in C, q \in D$ such that $(c, q_0, c_0, q) \in \Gamma_s$, counted with multiplicity. The definition of $s$ implies that any such quadruple satisfies $q_0 = q$, so the count is given by the number of intersections of $C$ with a line through $c_0$ with direction $q_0$, besides $c_0$ itself. A line intersects a degree $d$ curve in $d$ points, so there are $d - 1$ intersections.
        \item $\iota_s (D^{(0)}, C^{(0)}) = d - 1$, by the previous bullet and self-adjointness of $s$.
        \item $\iota_s (D^{(0)}, D^{(0)}) = 2$. Again by smoothness of the divisors, we need only compute the degree of $\Gamma_s|_{D^{(0)} \times D^{(0)}}$. All curves of the form $c \times D$, $c \in C$, are numerically equivalent. Let $D^{(1)} = c_1 \times D$, where $c_1 \neq c_0$ is a general point of $C$. The intersection $\Gamma_s \cdot \Pi_1^* D^{(0)} \cdot \Pi_2^* D^{(1)} $ is the number of values of $q \in D$ such that $[q]$ is the slope of the line $\overline{c_0 c_1}$. There are $2$ such values.
    \end{itemize}
    We have proved \eqref{eq_cheap_s_matrix}. Now we show
    \begin{equation} \label{eq_cheap_r_matrix}
    M_r = \begin{pmatrix}
1 & 0 \\
d(d-1) & 1 \\
\end{pmatrix}.
\end{equation}
Again we let $\Pi_1, \Pi_2 : (C \times D)^2 \to C \times D$ be the projections, and define $\iota_r (\Delta, \Delta')$ as before.
    \begin{itemize}[leftmargin=*]
        \item $\iota_r (C^{(0)}, C^{(0)}) = d(d-1)$. The intersection $\Gamma_r \cdot \Pi_1^* C^{(0)} \cdot \Pi_2^* C^{(0)}$ is the number of points $c \in C$ such that $r( c, q_0 ) = (c, q_0)$, with multiplicity. These are exactly the points $c \in C$ with a tangent line of slope $[q_0]$. By generality of $q_0$, we may assume that $C$ has no points at infinity with tangent slope $[q_0]$. By Lemma \ref{lemma_rh}, there are $d(d-1)$ such tangents.
        \item $\iota_r (C^{(0)}, D^{(0)}) = 1$. This intersection counts pairs $(c, q)$ such that $(c, q_0, c_0, q) \in \Gamma_r$. By the definition of $r$, we must have $c = c_0$; then there is only one value of $q$ possible, since $r$ is a rational map and generality of $c_0, q_0$ implies that $(c, q_0)$ is not an indeterminacy point of $r$.
        \item $\iota_r (D^{(0)}, C^{(0)}) = 1$, by the previous bullet and by self-adjointness of $r$.
        \item $\iota_r (D^{(0)}, D^{(0)}) = 0$. Let $c_1 \neq c_0$ be a general point of $C$, and let $D^{(1)} = c_1 \times D$. Then $D^{(0)}$ and $D^{(1)}$ are numerically equivalent. So
        $$\Gamma_r \cdot \Pi_1^*D^{(0)} \cdot \Pi_2^* D^{(0)} = \Gamma_r \cdot \Pi_1^* D^{(0)} \cdot \Pi_2^* D^{(1)}.$$
        Since $c_0 \neq c_1$, the divisors $\Pi_1^* D^{(0)}$ and $\Pi_2 D^{(1)}$ are disjoint, so the intersection is $0$.
    \end{itemize}
    
    Combining \eqref{eq_cheap_s_matrix} and \eqref{eq_cheap_r_matrix} gives
    \[ 
    \rad(M_b) = \rad \begin{pmatrix}
        d - 1 & 2 \\
        d(d-1)^2 & (2d + 1)(d - 1) \\
    \end{pmatrix}.
    \]
    The eigenvalues of $M_b$ are thus
    $$d^2 \pm \sqrt{d^4 - 5d^2 + 4d + 1}.$$
    Since we assumed $d \geq 2$, we have 
    $\rad( M_b ) < 2d^2.$
\end{proof}

In the case $d = 2$, which includes the complexification of the real elliptic billiard, this argument shows that $\lambda_1(b) \leq \frac{4 + \sqrt{5}}{2}$. But complete integrability of the elliptic billiard implies that $\lambda_1(b) = 1$. Our next step is therefore to look for a birational model on which we can obtain a tighter bound.

\section{Blowing up Scratch Points} \label{sect_local}
This section contains essential lemmas for the proofs of our main results, Theorem \ref{thm_main} (the dynamical degree bound) and Theorem \ref{thm_main_sc} (singularity confinement).

Let $C$ be an algebraic billiard curve of degree $d \geq 2$ (for instance, any smooth curve). We defined billiards $b$ on $C \times D$ in Definition \ref{def_b} as the composition of the secant correspondence $s$ and reflection correspondence $r$. We were able to compute a bound on the dynamical degree of $b$ (Section \ref{sect_cheap}), but we also found evidence that this bound was not sharp. In fact, this phenomenon has a geometric interpretation that is already manifest over $\R$, as we now explain.

We classified the indeterminacy points of $s$ and $r$ in Proposition \ref{prop_basic_s} and Proposition \ref{prop_basic_r}. It will be helpful to dub these points \emph{scratch points}, as follows.

\begin{defn} \label{def_scratch}
A \emph{scratch point at infinity} is a point $p = (c, q) \in C \times D$ such that $c \in C_\infty$ and $[q] = t(c)$. A scratch point at infinity is \emph{basic} if $q$ is non-isotropic (i.e. $q \not\in D_\infty$) and $c$ is a nonsingular point of $C$ with a tangent line of least order (quadratic), transverse to the line at infinity in $X$.

An \emph{isotropic scratch point} is a point $p = (c, q) \in C \times D$ such that $q \in D_\infty$ and $[q] = t(c)$. An isotropic scratch point is \emph{basic} if $c \not\in C_\infty$ and $c$ is a nonsingular point of $C$ with a tangent line of least order (quadratic).
\end{defn}

We can do better than the cheap bound of Theorem \ref{thm_cheap_upper_bound} by considering ``destabilizing orbits'', or as they are called in the integrable systems literature, ``confined mapping singularities''. Informally, if $p = (c, q)$ is a basic scratch point at infinity, then there is a singularity pattern
$$ (C \times q)\smallsetminus p \quad \overset{s}{\mapsto} \quad p \quad \overset{r}{\mapsto} \quad p \quad \overset{s}{\mapsto} \quad (C \times q) \smallsetminus p.$$
There is also a singularity pattern for each basic isotropic scratch point $p$:
$$(c \times D) \smallsetminus p \quad \overset{r}{\mapsto} \quad p \quad \overset{s}{\mapsto} \quad p \quad \overset{r}{\mapsto} \quad (c \times D) \smallsetminus p. $$

It turns out that blowing up $p$ allows us to extend the domain in a way that resolves these singularity patterns. For instance, if $p$ is a basic scratch point at infinity, we show that it is possible to compute a well-defined image of a point in $(C \times q) \smallsetminus p$ by $s \circ r \circ s$, as depicted in Figure \ref{fig_sc}.

\begin{figure}
\includegraphics[width=6.1in]{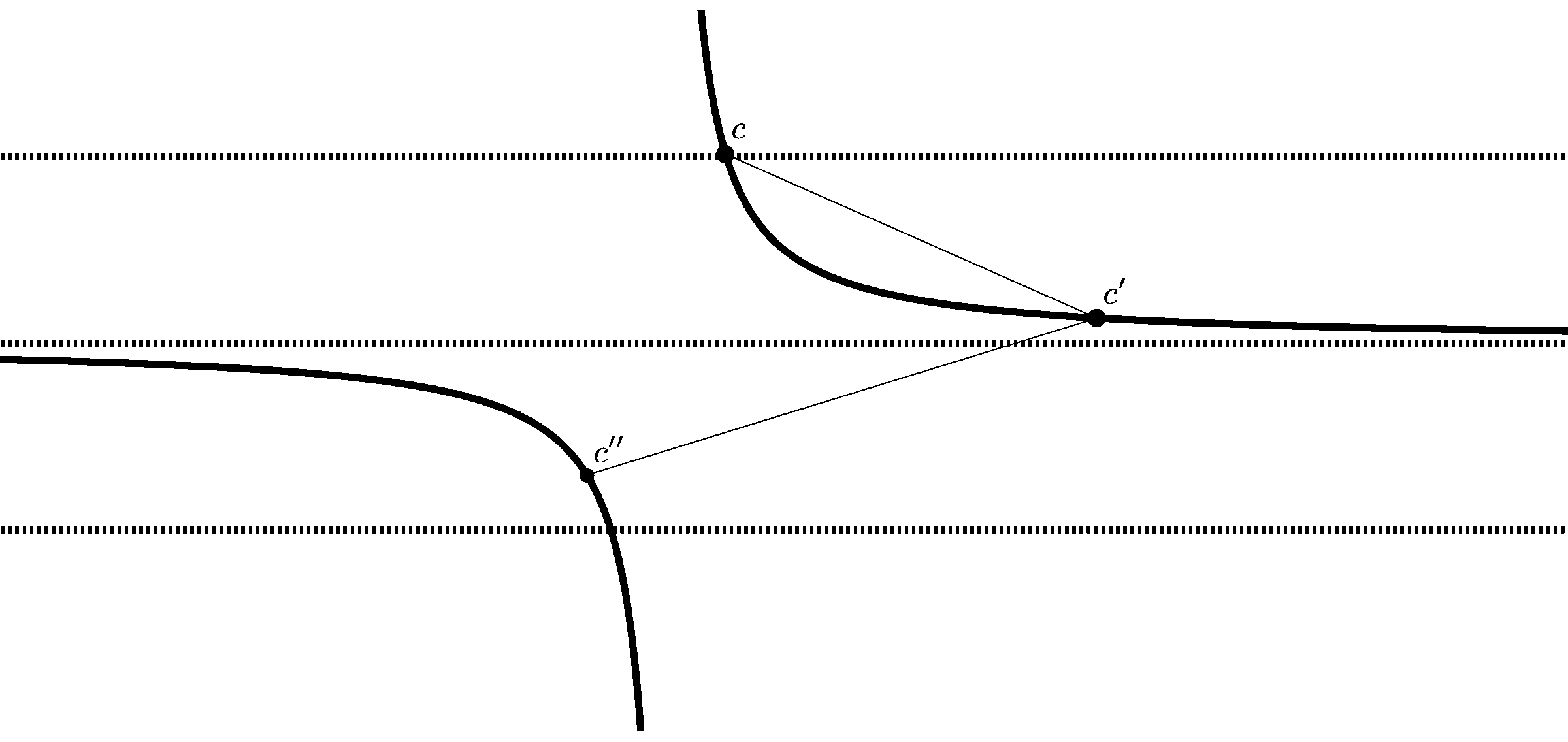}
% \psset{xunit=1cm,yunit=1cm,algebraic=true,dimen=middle,dotstyle=o,dotsize=5pt 0,linewidth=2pt,arrowsize=3pt 2,arrowinset=0.25}
% \begin{pspicture*}(-6.576698910925947,-3.738198885737161)(8.77272254072749,3.190239010924898)
% \rput{-135}(0,0){\parametricplot[linewidth=2pt]{-0.99}{0.99}{1.4142135623730951*(1+t^2)/(1-t^2)|1.4142135623730951*2*t/(1-t^2)}}
% \rput{-135}(0,0){\parametricplot[linewidth=2pt]{-0.99}{0.99}{1.4142135623730951*(-1-t^2)/(1-t^2)|1.4142135623730951*(-2)*t/(1-t^2)}}
% \psplot[linewidth=2pt,linestyle=dashed,dash=1pt 1pt]{-6.576698910925947}{8.77272254072749}{(--1.8-0*x)/1}
% \psplot[linewidth=2pt,linestyle=dashed,dash=1pt 1pt]{-6.576698910925947}{8.77272254072749}{(-1.8-0*x)/1}
% \psplot[linewidth=2pt,linestyle=dashed,dash=1pt 1pt]{-6.576698910925947}{8.77272254072749}{(-0-0*x)/1}
% \psline[linewidth=0.4pt](4.129278976638189,0.24217302963970244)(-0.7870761661898934,-1.2705250685468423)
% \psline[linewidth=0.4pt](0.5486829711667787,1.8225460831661897)(4.129278976638189,0.24217302963970244)
% \begin{scriptsize}
% \psdots[dotstyle=*](0.5486829711667787,1.8225460831661897)
% \rput[bl](0.6019278516344053,1.9735253231607663){$c$}
% \psdots[dotstyle=*](4.129278976638189,0.24217302963970244)
% \rput[bl](4.191241232914582,0.3831384997479968){$c'$}
% \psdots[dotsize=4pt 0,dotstyle=*,linecolor=darkgray](-0.7870761661898934,-1.2705250685468423)
% \rput[bl](-0.7258262853432207,-1.1488855044569648){\darkgray{$c''$}}
% \end{scriptsize}
% \end{pspicture*}
\caption{Singularity confinement for billiards. Here we show a hyperbola $C$ with a horizontal asymptote, corresponding to a point $c_\infty = [1 : 0 : 0]$. Let $c \in C$. The three dotted lines are the asymptote, the horizontal line $\ell$ through $c$, and the reflection of $\ell$ across the asymptote. As the direction of the first shot tends to $[1 : 0 : 1]$, the limiting position of $c''$ is the intersection of the lower dotted line with $C$. Thus the limit of $c''$ depends on $c$, but the limit of $c'$ does not.}
    \label{fig_sc}
\end{figure}

We present these lemmas together to highlight their similarities, then give their proofs in the next section. The proofs are technical and involve rather delicate computations, and can be safely skipped on first reading.

Our first lemma computes the strict transform of the secant correspondence when we blow up a basic scratch point at infinity.

\begin{lemma}[Secant at infinity] \label{lemma_secant_at_infty}
Let $s: C \times D \vdash C \times D$ be the secant correspondence. Let $p = (c, q) \in C \times D$ be a basic scratch point at infinity, and let $P \to C \times D$ be the blowup at $p$, with exceptional divisor $E$. Let $s_{\str}: P \vdash P$ be the strict transform of $s$.
\begin{enumerate}
    \item We may identify $E$ with the pencil of lines through $c$ in $X = \PP^2$. Let $T_c$ be the tangent line at $c$. 
    Given $e \in E$ corresponding to a general line $\ell$, the image set is 
    $$s_{\str}(e) = (\ell \cap C \smallsetminus c) \times q.$$ 
    The point $e \in E$ corresponding to $T_c$ has image set $s_{\str}(e) = (\ell \cap C \smallsetminus c) \cup \{e\}.$
    \item As divisors,
    $$ {s_{\str}}_* E = (C \times q)_{\str}.$$
\end{enumerate}
\end{lemma}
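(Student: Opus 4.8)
The plan is to compute the pushforward through the standard identification ${s_{\str}}_* = (\pi_2)_* \circ \pi_1^*$ on $\Num^1 P$, where $\pi_1, \pi_2 : \Gamma_{s_{\str}} \to P$ are the two projections; this agrees with the intersection-theoretic formula of Section \ref{sect_prelim} by the projection formula. Since $\pi_1$ is generically finite, $\pi_1^* E = \pi_1^{-1}(E)$ is an effective divisor on the surface $\Gamma_{s_{\str}}$, and the whole computation reduces to understanding this divisor and its image under $\pi_2$. First I would identify its components. By part (1), for every $e \in E$ the image $s_{\str}(e)$ is finite, so $\pi_1^{-1}(E)$ has no component contracted by $\pi_1$ to a point of $E$; it is therefore a curve $Z$ dominating $E$, with $\pi_1|_Z$ of degree $d-1$. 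Because $p$ is a \emph{basic} scratch point at infinity, a general line $\ell$ through $c$ meets $C$ transversally in $d-1$ distinct points away from $c$, so the general fiber of $\pi_1$ over $E$ consists of $d-1$ reduced points. Hence $\pi_1$ is étale at a general point of $Z$, and since $E$ is reduced, $Z$ appears in $\pi_1^* E$ with multiplicity exactly $1$; that is, $\pi_1^* E = Z$ as divisors.

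Next I would compute $(\pi_2)_* Z$. Writing a point of $Z$ as a pair $(e, (c', q))$, where $e$ corresponds to a line $\ell \ni c$ and $c' \in \ell \cap C \smallsetminus c$, part (1) shows that $\pi_2$ maps $Z$ onto the strict transform $(C \times q)_{\str}$: as $e$ ranges over $E$, the points $(\ell \cap C \smallsetminus c) \times q$ sweep out $C \times q$. This map is generically injective, since a general point $(c', q)$ of $(C \times q)_{\str}$ determines the line $\ell$ through $c$ and $c'$ uniquely, hence a unique $e \in E$ and a unique preimage $(e, (c', q))$. Therefore $\pi_2|_Z$ is birational, the dominating part of $\pi_1^{-1}(E)$ is irreducible, and $(\pi_2)_* Z = (C \times q)_{\str}$. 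Any component of $\pi_1^{-1}(E)$ not dominating $(C \times q)_{\str}$ would be contracted by $\pi_2$ and contribute $0$ to the pushforward; but part (1) shows there is no such curve, the only image point off $C \times q$ being the single point $e = e_{T_c}$ over the single line $T_c$. Combining, ${s_{\str}}_* E = (\pi_2)_* \pi_1^* E = (\pi_2)_* Z = (C \times q)_{\str}$, as claimed.

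The main obstacle is the multiplicity bookkeeping near the tangent line. The point $e = e_{T_c}$ behaves differently because $T_c$ meets $C$ at $c$ with multiplicity $2$, so one of the $d-1$ sheets of $\pi_1|_Z$ limits to the point $e \in E$ rather than to a point of $C \times q$; this is precisely where $(C \times q)_{\str}$ meets $E$. I must verify that this degeneration introduces neither an extra $\pi_2$-contracted component nor extra multiplicity of $Z$ in $\pi_1^* E$. The \emph{basic} hypotheses — $c$ a nonsingular point with quadratic tangency and $q$ non-isotropic — are exactly what guarantee the generic transversality underlying the multiplicity-one and birationality claims, so the anomalous behavior at $e_{T_c}$ is confined to a single point and is invisible at the level of divisors.
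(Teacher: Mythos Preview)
Your argument addresses only part (2), repeatedly citing part (1) as a given. But part (1) is the substance of the lemma: the identification of $E$ with the pencil of lines through $c$, and the description of $s_{\str}(e)$ for $e \in E$, are not formal consequences of the definitions and require genuine local computation. The paper spends six of its seven steps on this. After an affine reduction moving $c$ to $[1:0:0]$ with tangent line $X_1 = 0$, it writes down explicit charts on the blowup, computes the strict transforms of $C$, $D$, and the secant graph in these charts, and reads off the fiber over each $e \in E$ directly from the equations. Without this, you have no justification for the claim that $s_{\str}(e)$ is the finite set $(\ell \cap C \smallsetminus c) \times q$, and hence no foundation for the rest of your argument. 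The ``basic'' hypotheses enter precisely here: they control the local shape of $C$ and $D$ near $p$ so that a single blowup suffices and the equations simplify cleanly.

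Granting part (1), your derivation of part (2) is correct and is essentially the paper's Step 7, carried out with more care about multiplicities. The paper simply observes that each point of $(C \times q)_{\str}$ arises exactly once as an image of some $e \in E$, so the induced map $E \to (C \times q)_{\str}$ has degree $1$; you reach the same conclusion via $(\pi_2)_* \pi_1^*$ and an explicit check of \'etaleness of $\pi_1$ over a general point of $E$. Both arguments are fine, but neither is the hard part.
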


Our next lemma describes the behavior of the strict transform of $s$ when we blow up a basic isotropic scratch point.

\begin{lemma}[Secant at isotropic scratch points] \label{lemma_secant_at_isotropic}
Let $s: C \times D \vdash C \times D$ be the secant correspondence. Let $p = (c, q) \in C \times D$ be a basic isotropic scratch point, and let $P \to C \times D$ be the blowup at $p$, with exceptional divisor $E$. Let $s_{\str} : P \vdash P$ be the strict transform.
\begin{enumerate}
    \item Given $e \in E$, the image set ${s_{\str}}(e)$ consists of one point of $E$ together with the multiset 
    $$(T_c \cap C \smallsetminus \{c\}) \times q,$$
    where $T_c$ is the tangent to $C$ at $c$. 
    \item As divisors,
    $${s_{\str}}_* E =  E.$$
\end{enumerate}
\end{lemma}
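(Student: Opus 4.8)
The plan is to follow the blueprint of Lemma \ref{lemma_secant_at_infty}: reduce by symmetry, set up explicit charts on a one-sided blowup, compute the strict transform of the secant graph by a determinant manipulation, and read off the images of the exceptional divisor. The decisive difference, flagged in the text, is that here the point $c$ is affine and the line $\ell$ through $c$ of direction $[q] = t(c)$ is the tangent line $T_c$ itself. Consequently two intersection points of the secant line with $C$ collide at $c$ as we approach $p$, and the closure-over-$\{x \neq x'\}$ convention of Definition \ref{def_secant} dictates which of them survives; this is the feature the lemma's preamble warns about.

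First I would reduce to a normal form. Translations move $c$ to the origin, and since $O_2(k)$ permutes the two isotropic directions, I may assume $q$ is a fixed standard isotropic direction, say $q = I$; no rotation is needed to align the tangent with $q$, since $[q] = t(c)$ holds by hypothesis. As $c$ is nonsingular with quadratic tangency, the local equation of $C$ has linear part equal to the (isotropic) tangent form and a nondegenerate quadratic correction. Because $q = \pm I$ lies on the line at infinity of $Q$, I would work in affine coordinates on $Q$ centered at $q$, together with affine coordinates $(u,v)$ on $C$ near $c$, exactly paralleling Step 2 of Lemma \ref{lemma_secant_at_infty}.

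Next I would blow up $p$ in one affine chart with exceptional parameter, compute the strict transforms of $C$ and $D$, and identify $E$ as the vanishing of that parameter. Then comes the main computation: expand the $3 \times 3$ determinant defining the secant relation in blown-up coordinates, perform column operations to expose the largest power of the exceptional parameter, and divide it out to obtain the ideal of $\Gamma_{s_{\str}}$. Setting the parameter to zero in this ideal and intersecting with $C \times D$ on the second factor, I expect the solution set to split into two pieces: a constant part, namely the residual intersection $(T_c \cap C \smallsetminus \{c\}) \times q$, which does not depend on $e$ because the secant line has degenerated to $T_c$; and a single additional point lying on $E$, coming from the second branch of the tangency that is resolved by the blowup. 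This yields part (1). For part (2) and the indeterminacy claim, I would check that the induced self-map $E \to E$ is finite of degree one, so that $E$ is neither contracted nor expanded, whence ${s_{\str}}_* E = E$ and the restriction of $\Gamma_{s_{\str}}$ to $E \times P$ carries no indeterminacy.

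The main obstacle is the bookkeeping in the determinant step: identifying the exact power of the exceptional parameter to factor out, and then verifying that the residual ideal genuinely separates into the fixed tangent-intersection multiset plus exactly one moving point on $E$. Here the distinction between the two colliding intersection points is essential — one is the base point $c$ removed by the $\{x \neq x'\}$ closure, and the other is the one captured by $E$ — so the calculation must be carried out on the Zariski closure of the honest locus rather than on the naive restriction. Confirming that the surviving point varies bijectively over $E$, giving degree one rather than degree two or a contraction, is the crux of the pushforward computation in part (2).
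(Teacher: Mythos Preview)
Your overall strategy is right, and you correctly flag the crucial subtlety about the closure over $\{x \neq x'\}$. But there is a concrete technical gap: you propose to compute in a \emph{one-sided} blowup, mirroring Lemma \ref{lemma_secant_at_infty}. That will not suffice here. In Lemma \ref{lemma_secant_at_infty} the image of $E$ lands on the strict transform of $C \times q$, which sits away from the blown-up point, so working in $\hat{A} \times B$ (blowup on the first factor only) is enough. In the present lemma, however, the interesting image of $e \in E$ is a point of $E$ itself, which lives in the blown-up second factor. If you compute in $\hat{A} \times B$, every $e \in E$ will appear to map to the single point $p$ in the second factor, and you will not be able to see the $1$-to-$1$ self-map of $E$ at all. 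The paper therefore computes in a \emph{two-sided} blowup $\hat{A} \times \hat{A}'$; once both sides are blown up, the strict-transform equations restricted to $E$ give an explicit involution (in the paper's coordinates, $\hat{x}_0 \mapsto -\hat{x}_0$), from which (2) follows immediately.

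A second, smaller point: you note that the calculation must be carried out on the Zariski closure of the locus $x \neq x'$ rather than on the na\"ive restriction, but you do not say how you will remove the ``identity component'' in practice. The paper's device is to pass to the completion at $(p,p)$, where $x_0$ is a uniformizer for $C$ and the condition $x \neq x'$ becomes $x_0 \neq x'_0$; one can then literally divide the determinant by $x'_0 - x_0$ before blowing up. Without some such step, your determinant manipulation will factor out one too few powers of the exceptional parameter and you will not separate the moving $E$-point from the base point $c$. Finally, note that the $(T_c \cap C \smallsetminus \{c\}) \times q$ part of the image does not need any blowup computation: since $p$ is basic, $p \notin \Ind s$, so each of the $d-2$ ordinary images of $p$ under $s$ automatically produces a contraction $E \times p'$ of $s_{\str}$.
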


Due to the near-symmetry between the definitions of secant and reflection (Remark \ref{rem_near_duality}), our lemmas for reflection are essentially identical to those for secant. The following lemma is the counterpart of Lemma \ref{lemma_secant_at_infty}.

\begin{lemma}[Reflection at isotropic scratch points] \label{lemma_reflect_isotropic}
Let $r: C \times D \vdash C \times D$ be the reflection correspondence. Let $p \in C \times D$ be a basic isotropic scratch point $(c, I)$, where $I = \pm i$. Let $P \to C \times D$ be the blowup at $p$, with exceptional divisor $E$. Let $r_{\str}: P \vdash P$ be the strict transform of $r$ to $P$. 
\begin{enumerate}
    \item Given $e \in E$, the image set $r_{\str}(e)$ consists of one point in $(c \times D)_{\str}$.
    \item As divisors,
    $${r_{\str}}_* E = (c \times D)_{\str}.$$
\end{enumerate}
\end{lemma}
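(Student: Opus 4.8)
The plan is to prove Lemma~\ref{lemma_reflect_isotropic} by mirroring the proof of Lemma~\ref{lemma_secant_at_infty} almost verbatim, invoking the near-duality of Remark~\ref{rem_near_duality}. Under the exchange $C \leftrightarrow D$, $X \leftrightarrow Q$, and secant-line direction $q \leftrightarrow$ normal direction $n(c)$, an isotropic scratch point $(c,I)$ of $r$ plays exactly the role that a scratch point at infinity $(c,q)$ plays for $s$: in both cases the point lies in the indeterminacy locus as an expansion (the expansion $(c,q_\infty)\times(c,D)$ in Proposition~\ref{prop_basic_r}), so it is resolved by a single one-sided blowup and, crucially, we avoid the delicate Zariski-closure-over-$\{x\neq x'\}$ bookkeeping that complicated Lemma~\ref{lemma_secant_at_isotropic}. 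Because $r$ is a birational involution rather than a $(d-1)$-to-$(d-1)$ correspondence (Proposition~\ref{prop_basic_r}), the image of a general $e \in E$ consists of a single point rather than $d-1$ points, which is precisely the asymmetry recorded in statement~(1).

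First I would carry out the reduction. By the Galois symmetry of $\pm i$ we may assume $I = i$, and since $p$ is basic we have $c \notin C_\infty$, so a translation (which preserves $D$ and the Euclidean form) moves $c$ to the origin, after which the tangent direction at $c$ equals $i$. I would then set up affine charts as in Lemma~\ref{lemma_secant_at_isotropic}: standard coordinates $x_0 = X_0/X_2$, $x_1 = X_1/X_2$ on $X$ near $c$, and coordinates on $Q$ centered at the isotropic direction $[1:i:0]$, in which the local equation of $C$ exhibits a quadratic tangency (because $c$ is basic) and $D \to \PP^1$ is simply ramified at $I$. Blowing up the domain factor at $p$ on the $Q$-side, the strict transforms of $C$, $D$, and the exceptional divisor $E$ are read off from the leading terms, giving that $E$ is parametrized by a single free coordinate lying where the exceptional coordinate vanishes.

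The heart of the argument is the strict-transform computation for $\Gamma_r$. I would write the reflection determinant of Definition~\ref{def_reflection_relation} in the blown-up chart, perform the same column operations as in Lemma~\ref{lemma_secant_at_infty} (subtracting one column from another to expose the exceptional factor), and then divide the appropriate column by the exceptional coordinate. The main obstacle, exactly as in the secant case, is verifying that precisely one power of the exceptional coordinate divides out --- equivalently, that setting it to zero leaves a nontrivial ideal; this is where basicness (quadratic tangency of $C$ at $c$ together with simple ramification of $D$ at $I$) is essential. Once this is confirmed, reading off the resulting equations shows that each $e \in E$ maps to a single point with the same $c$-coordinate and a determined reflected direction, so the images sweep out $c \times D$; the two boundary points of $E$ are handled by a flat limit as in the final steps of Lemma~\ref{lemma_secant_at_infty}. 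This proves~(1), and since $r$ is $1$-to-$1$ the induced map $E \to (c \times D)_{\str}$ has degree one, so ${r_{\str}}_* E = (c \times D)_{\str}$ with multiplicity one, which is~(2).
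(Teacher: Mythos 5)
Your proposal is correct and follows essentially the same route as the paper: the paper's proof likewise takes the reduction and the chart $A = \Spec k[x_0,x_1,w_1,w_2]$ from Lemma \ref{lemma_secant_at_isotropic}, pairs it with an unblown-up second chart $B$ so as to run the one-sided-blowup structure of Lemma \ref{lemma_secant_at_infty}, and reads off that a general point of $E$ has a single image lying in $c \times D$. Your identification of the duality (reflection-at-isotropic $\leftrightarrow$ secant-at-infinity) and your explanation of why the $\{q \neq q'\}$ closure issue does not interfere for a general point of $E$ are exactly the points the paper leaves implicit.
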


The following lemma is the counterpart of Lemma \ref{lemma_secant_at_isotropic}.

\begin{lemma}[Reflection at infinity] \label{lemma_reflect_infty}
Let $r: C\times D \vdash C \times D$ be the reflection correspondence. Let $p = (c, q) \in C \times D$ be a basic scratch point at infinity, and let $P \to C \times D$ be the blowup at $p$, with exceptional divisor $E$. Let $r_{\str}: P \vdash P$ be the strict transform of $r$.
\begin{enumerate}
    \item The image $r_{\str}(e)$ of any point $e \in E$ is a point in $E$.
    \item As divisors,
    $${r_{\str}}_* E = E.$$
\end{enumerate}
\end{lemma}

\begin{remark}
    In the identification of Lemma \ref{lemma_secant_at_infty}, if $\ell$ is the line through $c$ corresponding to $e$, then $r_{\str}(e)$ corresponds to the reflection of $\ell$ across the tangent $T_c$. This can be proved in local coordinates.
\end{remark}

\subsection{Proofs of the blowup lemmas}
First we prove the easy ones.

\begin{proof}[Proof of Lemma \ref{lemma_secant_at_isotropic}]
Since $p$ is basic, we do not have $c \in C_\infty$, so $p$ is not in the indeterminacy locus of $s$. Therefore, for each pair of the form $(p, p') \in \Gamma_s$ where $p \neq p'$, the strict transform $s_{\str}$ has a contraction $E \times p'$. Again since $p$ is basic, there are $d - 2$ such points $p' = (c', q)$, where $c'$ ranges over intersections besides $c$ of the tangent line $T_c$ with $C$.

We claim that $s_{\str}$ induces a $1$-to-$1$ map $E \to E$; this finishes proving (1) and (2).

Since $p$ is basic, the correspondence $s$ is regular at $p$. Since $p 
\in s(p)$, $\hat{s}$ induces a self-correspondence of $E$ by $2$-sided restriction. The existence of $d-2$ points in $s(p)$ besides $p$ itself implies that $\hat{s}|_{E \times E}$ is single-valued. We must check that it's not a constant map. Restrict $s$ to the completion of $C\times D$ at $p$; this doesn't affect our local calculation. Recall that $E$ is naturally the projectivized tangent space $\PP T_p(C\times D)$. Since $s$ fixes $p$ and $s = s^{-1}$ in the completion, the differential of $s$ at $p$ is invertible, so $\hat{s}|_{E \times E}$ is invertible and in fact an involution. Hence $s(E)=E$ and $E$ has multiplicity $1$ in $s_* E$.
\end{proof}

\begin{proof}[Proof of Lemma \ref{lemma_reflect_infty}]
    Since $p$ is basic, the rational map $r$ is regular at $p$. Since $r(p)=p$, we have $\hat{r}(E) \subset E$, so $\hat{r}$ induces a self-map of $E$ by $2$-sided restriction. Recall that $E$ is naturally the projectivized tangent space $\PP T_p(C\times D)$. Since $r$ fixes $p$ and $r = r^{-1}$, the differential of $r$ at $p$ is invertible, so $\hat{r}|_{E \times E}$ is invertible and in fact an involution. Hence $r(E)=E$ and $E$ has multiplicity $1$ in $r_* E$.
\end{proof}

The remaining two are trickier.

\begin{proof}[Proof of Lemma \ref{lemma_secant_at_infty}]
Step 1: Reduction. First, we reduce to the case that $c = [1: 0 : 0]$ and the tangent line to $c$ is $X_1 = 0$. To see that this suffices, note that affine transformations act on $X \times Q$. In particular, elements of $O_2(k)$ and translations preserve the standard Euclidean metric $dx_0^2 + dx_1^2$, hence also preserve the conic $D$. Since $p$ is basic, the point $c$ has a well-defined tangent line $T_c$ of slope $t_c \neq \pm i$. The group $O_2(k)$ acts transitively on non-isotropic directions, so there is an element of $O_2(k)$ taking lines of slope $t_c$ to lines of slope $[1:0]$. Then applying a translation allows us to move $T_c$ so that it passes through the origin.

Step 2: Defining charts. We use coordinates 
$$[X_0 : X_1 : X_2], [Q_0 : Q_1 : Q_2], \quad [X'_0 : X'_1 : X'_2], [Q'_0 : Q'_1 : Q'_2]$$
on the first and second factors of $\Gamma_s \subset (X \times Q)^2$.
We define an affine subset $A \times B \cong \A^4 \times \A^4$ of $(X \times Q)^2$ by
$$A \colonequals \Spec k[u, v, y, z], \quad B \colonequals \Spec k[x'_0, x'_1, y', z'],$$
where
$$u = X_1 / X_0, \quad v = X_2 / X_0, \quad y = Q_0/Q_2 - 1, \quad z = Q_1 / Q_2,$$
$$x'_0 = X'_0 / X'_2, \quad x'_1 = X'_1 / X'_2, \quad y' = Q'_0 / Q'_2 - 1, \quad z' = Q'_1 / Q'_2.$$
The affine equation of $C$, by our symmetry reduction above, is
\begin{equation}
    0 = u + O(u^2, uv, v^2). \label{eq_C_uv}
\end{equation}
The affine equation of $D$ is
\begin{equation}
    0 = y^2 + 2y + z^2. \label{eq_D_yz}
\end{equation}
Then $\Gamma_s|_{A \times B} \subset (C \times D)^2 \cap V$, where $V$ is the variety with ideal generated by
\begin{align*}
&\begin{vmatrix}
    1 & x'_0 & y + 1 \\
    u & x'_1 & z \\
    v & 1 & 0
\end{vmatrix},\\
&y - y',  \\
&z - z'.
\end{align*}

Step 3: Equations for $C, D, E$ in a one-sided blowup. The coordinates of $p$ in $A$ are $(0,0,0,0)$. Consider the blowup of $A$ at $p$. It has an affine subset 
$$\hat{A} = \Spec k[\hat{u}, \hat{v}, \hat{y}, \hat{z}]$$ 
defined by
$$ \hat{u} \hat{z} = u, \quad \hat{v} \hat{z} = v, \quad \hat{y} \hat{z} = y, \quad \hat{z} = z.$$
By \eqref{eq_C_uv}, \eqref{eq_D_yz}, the total transforms of $C$ and $D$ to $\hat{A}$ are defined respectively by
$$ 0 = \hat{u} \hat{z} + O(\hat{z}^2),$$
$$ 0 = \hat{y}^2 \hat{z}^2 + 2 \hat{y} \hat{z} + \hat{z}^2.$$
Therefore the strict transforms $C_{\str, 1}, D_{\str, 1}$ of $C$ and $D$ to $\hat{A}$ are defined respectively by
\begin{equation}
    0 = \hat{u} + O(\hat{z}), \label{eq_C_uv_str}
\end{equation}
\begin{equation}
    0 = \hat{y} \hat{z} + 2 \hat{y} + \hat{z}. \label{eq_D_yz_str}
\end{equation}
Recall that $E$ is the exceptional divisor. We claim that in $\hat{A}$, the line $E$ is parametrized by $\hat{v}$, and consists of points of the form $(0, \hat{v}, 0, 0)$. Indeed $\hat{z}(E) = 0$ by definition of the exceptional divisor, and it follows from \eqref{eq_C_uv_str}, \eqref{eq_D_yz_str} that $\hat{y}(E) = 0$ and $\hat{u}(E) = 0$.

Step 4: The strict transform of the secant graph. Let $V_{\tot, 1}$ denote the total transform of $V$ in $\hat{A} \times B$. The ideal of $V_{\tot, 1}$ is generated by
\begin{align}
& \begin{vmatrix}
    1 & x'_0 & \hat{y} \hat{z} + 1 \\
    \hat{u} \hat{z} & x'_1 & \hat{z} \\
    \hat{v} \hat{z} & 1 & 0
\end{vmatrix}, \label{eq_midway_det_of_s_inf} \\
& \hat{y} \hat{z} - y', \nonumber \\
& \hat{z} - z'. \nonumber
\end{align}
We can rewrite the determinant \eqref{eq_midway_det_of_s_inf} by subtracting the third column from the first, to obtain
\begin{equation}
\begin{vmatrix}
    -\hat{y} \hat{z} & x'_0 & \hat{y} \hat{z} + 1 \\
    (\hat{u} - 1) \hat{z} & x'_1 & \hat{z} \\
    \hat{v} \hat{z} & 1 & 0
\end{vmatrix}. \label{eq_midway_2_det_of_s_inf}
\end{equation}
We claim that the strict transform $V_{\str, 1}$ of $V$ to $\hat{A} \times B$ has ideal defined by dividing the first column of \eqref{eq_midway_2_det_of_s_inf} by $\hat{z}$, that is,
\begin{align*}
& 
\begin{vmatrix}
    -\hat{y} & x'_0 & \hat{y} \hat{z} + 1 \\
    \hat{u} - 1 & x'_1 & \hat{z} \\
    \hat{v} & 1 & 0
\end{vmatrix}, \\
& \hat{y} \hat{z} - y', \nonumber \\
& \hat{z} - z'. \nonumber
\end{align*}
To see this, note that the ideal generating the strict transform is obtained by factoring out the largest possible power of $(\hat{z})$. This power is simply $(\hat{z})$, since in the ideal above, setting $\hat{z} = 0$ produces the nontrivial ideal
$$ \langle \hat{u} - 1 - x'_1 \hat{v}, \quad y', \quad -z' \rangle. $$

Step 5: Computing images of $E$. The strict transform $s_{\str, 1}$ of $s$ to the affine space $\hat{A} \times B$ is a $(d - 1)$-to-$(d-1)$ rational correspondence, and its graph is contained in 
$$V_{\str, 1} \cap (C_{\str, 1} \times D_{\str, 1} \times C \times D).$$
The difference between these sets is that, in the definition of the secant correspondence, we removed the ``identity component'', see Definition \ref{def_secant}.

By the result of Step 3, a point $e \in E \cap \hat{A}$ is determined by a choice of $\hat{v}$. Suppose $\hat{v} \neq 0$. Setting $\hat{u}, \hat{y}, \hat{z} = 0$ in the ideal of $V_{\str, 1}$ gives the equations 
$$-1 - \hat{v} x'_1 = 0, \quad -y' = 0, z' = 0.$$
Intersecting with $C \times D$ in the second factor, the images of $s_{\str, 1}$ in $\hat{B}$ include all the points of the form $(x'_0, x'_1, 0, 0)$, where $(x'_0, x'_1)$ is an intersection of the ``horizontal'' line $x'_1 = -1/\hat{v}$ with $C$. This is because none of these points correspond to pairs in the identity component, since they have different projections to $C$. This provides $d - 1$ images, by Bezout's theorem and the assumption that the scratch point $p$ is basic. Since $s_{\str}$ was $(d-1)$-to-$(d-1)$, these are all the images. Further, we have shown that $s_{\str}$ generically identifies $E$ with the pencil of lines through $c$ in $X = \PP^2$. This proves (1).

Step 6: The remaining points of $E$. We have thus far described the images of any point in $E \cap \hat{A}$ such that $\hat{v} \neq 0$, but there remain two points of $E$. Since the rational correspondence is a projective variety, we may take a flat limit to find that the point where $\hat{v} = 0$ corresponds to the line at infinity in $X$, and the point where $\hat{v} = \infty$ corresponds to the tangent line to $c$.

Step 7: From set-theoretic images to pushforwards. By the results of Step 5 and Step 6, we have
$$ s_{\str}(E) = (C \times q)_{\str}.$$
Therefore, to prove the claim about the pushforward, we need only check the multiplicity. A point in $(C \times q)_{\str}$ has $d-2$ images with multiplicity in the main affine patch, so the image in $E$ appears with multiplicity $1$. Then use self-adjointness.
\end{proof}

\begin{proof}[Proof of Lemma \ref{lemma_reflect_isotropic}]
    We study reflection at the blowup of a basic isotropic scratch point.
    
    Step 1: As in Lemma \ref{lemma_secant_at_isotropic}, we may reduce to the case that $c = [0 : 0 : 1]$ with tangent slope $i$.
    
    Step 2: Define affine charts
    $$A = \Spec k[x_0, x_1, w_1, w_2],$$
    $$B = \Spec k[x'_0, x'_1, q'_0, q'_1],$$
    where
    $$x_0 = X_0/X_2, \quad x_1 = X_1/X_2, \quad w_1 = Q_1 / Q_0 - i, \quad w_2 = Q_2 / Q_0,$$
    $$x'_0 = X'_0 / X'_2, \quad x'_1 = X'_1 / X'_2, \quad q'_0 = Q_0 / Q_2, \quad q'_1 = Q_1 / Q_2.$$
    Define 
    $$\hat{A} = \Spec k[\hat{x}_0, \hat{x}_1, \hat{w}_1, \hat{w}_2],$$
    where
    $$\hat{x}_0 \hat{w}_2 = x_0, \quad \hat{x}_1 \hat{w}_2 = x_1, \quad \hat{w}_1 \hat{w}_2 = w_1, \quad \hat{w}_2 = w_2.$$
    Let the affine equation of $C$ be $F(x_0, x_1) = 0$.
    
    Over $A \times B$, we may define $\Gamma_r$ directly from its ideal without taking a Zariski closure, since the condition $q \neq q'$ holds on $A \times B$. Therefore we have 
    $$\Gamma_r|_{A \times B} = (C \times D)^2 \cap V,$$
    where $V$ is the variety defined by
    \begin{align}
        &x_0 = x'_0, \nonumber \\
        &x_1 = x'_1, \nonumber \\
        &\begin{vmatrix}
            1 & q_0' & \partial F / \partial x_0 \\
            w_1 + i & q_1' & \partial F / \partial x_1 \\
            w_2 & 1 & 0
        \end{vmatrix} = 0. \label{eq_r_iso_det}
    \end{align}
    
    Step 3: As in Lemma \ref{lemma_secant_at_infty}, the strict transforms of $C, D$ in $\hat{A}$ are
    \begin{align*}
        C_{\str, 1}: & \quad 0 = i \hat{x}_0 - \hat{x}_1 + O(\hat{w}_2),\\
        D_{\str, 1} &: \quad 0 = 2i \hat{w}_1 + O(\hat{w}_2).
    \end{align*}
    
    The exceptional divisor $E$ is defined by
    $$i \hat{x}_0 = \hat{x}_1, \quad \hat{w}_1 = 0, \quad \hat{w}_2 = 0.$$

    Step 4: The strict transform of $r$. First, note that
    \begin{align*}
        \frac{\partial F}{\partial x_0} &= i + 2 \kappa x_0 + O(x_0^2, ix_0 - x_1),\\
        \frac{\partial F}{\partial x_1} &= -1 + O(ix_0 - x_1).
    \end{align*}
    Let $V_{\tot, 1}$ denote the total transform of $V$ in $\hat{A} \times B$; its equations are 
    \begin{align}
        &\hat{x}_0 \hat{w}_2 = x_0', \nonumber\\
        &\hat{x}_1 \hat{w}_2 = x_1', \nonumber\\
    &\begin{vmatrix}
        1 & q'_0 & i + 2 \kappa \hat{x}_0 \hat{w}_2 + O(\hat{x}_0^2 \hat{w}_2^2, (i \hat{x}_0 - \hat{x}_1) \hat{w}_2)  \\
        i + \hat{w}_1 \hat{w}_2 & q'_1 & -1 + O((i \hat{x}_0 - \hat{x}_1) \hat{w}_2) \\
        \hat{w}_2 & 1 & 0 \\
    \end{vmatrix} = 0. \label{eq_r_iso_det_tot}
    \end{align}
    Let us compute ${\Gamma_r}_{\str, 1}$. The local equation for $C_{\str, 1}$ in $\hat{A}$ allows us to rewrite \eqref{eq_r_iso_det_tot} as
    \begin{align*}
    0 &= \begin{vmatrix}
        1 & q'_0 & i + 2 \kappa \hat{x}_0 \hat{w}_2 + O(\hat{w}_2^2)  \\
        i + \hat{w}_1 \hat{w}_2 & q'_1 & -1 + O(\hat{w}_2^2) \\
        \hat{w}_2 & 1 & 0 \\
    \end{vmatrix} \\
    &=
    \begin{vmatrix}
        2 \kappa i \hat{x}_0 \hat{w}_2 + O(\hat{w}_2^2) & q'_0 & i + 2 \kappa \hat{x}_0 \hat{w}_2 + O(\hat{w}_2^2)  \\
        \hat{w}_1 \hat{w}_2 + O(\hat{w}_2^2) & q'_1 & -1 + O(\hat{w}_2^2) \\
        \hat{w}_2 & 1 & 0 \\
    \end{vmatrix} & \text{(by column operations).}
    \end{align*}
    We may thus factor out exactly one factor of $\hat{w}_2$, from the first column, so the strict transform is
        \[
    \begin{vmatrix}
        2 \kappa i \hat{x}_0 + O(\hat{w}_2) & q'_0 & i + O(\hat{w}_2)  \\
        \hat{w}_1 + O(\hat{w}_2) & q'_1 & -1 + O(\hat{w}_2) \\
        1 & 1 & 0 \\
    \end{vmatrix} = 0.
    \]
    Then imitate Steps 5, 6, and 7 of Lemma \ref{lemma_secant_at_infty}.
\end{proof}

\section{The Improved Dynamical Degree Bound} \label{sect_global_bound}
In Section \ref{sect_cheap}, we proved a ``cheap'' dynamical degree bound for the billiards correspondence
$$b : C \times D \vdash C \times D$$
for a general curve $C$ of degree $d \geq 2$. In this section, we see that blowing up all the indeterminacy points of $b$ buys us a better bound. We prove our results first for general curves in Theorem \ref{thm_expensive_upper_bound}; then we pass to the space of all smooth curves. This proves our main result, Theorem \ref{thm_main}. As a bonus, we get a new proof of the complete integrability of billiards in an ellipse, also known as Poncelet's Porism (Corollary \ref{cor_poncelet}).

From now on, fix a general degree $d \geq 2$ curve $C$ in $\PP^2$. The only generality assumptions we require are the following.
\begin{enumerate} [label=(G\arabic*)]
    \item The curve $C$ is smooth (hence irreducible, reduced, and contains no lines). \label{GA_smooth}
    \item There are $d$ distinct points in $C \cap L_\infty$. \label{GA_simple_inf}
    \item The tangent slope at each point of $C_\infty$ is non-isotropic. \label{GA_no_inf_isotropic}
    \item Every isotropic tangent of $C$ is only tangent to $C$ at a single point, and this tangency is simple. \label{GA_simple_isot}
\end{enumerate}

Each condition above is open and holds for at least one degree $d$ curve, so they all hold generically.

\begin{remark}
Artificial as \ref{GA_simple_isot} appears, it is of fundamental importance in algebraic billiards. Indeed, among pairs of real conics, the confocal conics are exactly those that share their isotropic tangents. Confocal conics produce integrable billiards, so it is reasonable to expect \ref{GA_simple_isot} to influence the dynamics (see also \cite{MR3236494, MR4238126}).
\end{remark}

We now define the birational model used for the improved bound of Theorem \ref{thm_main}.

We give names to the scratch points of $C \times D$:
$$p^{(\infty)}_1, \ldots, p^{(\infty)}_{2d}, \quad p^{(i)}_1, \ldots, p^{(i)}_{d(d-1)}, \quad p^{(-i)}_1, \ldots, p^{(-i)}_{d(d-1)}. $$

The points $p^{(\infty)}_j$ are the scratch points at infinity, that is, the indeterminacy points of $s: C \times D \vdash C \times D$. They are the points of $C \times D$ of the form $(c, q)$, where $\pi(c) \in C_\infty$ and $[q] = t(c)$. By \ref{GA_simple_inf} and \ref{GA_no_inf_isotropic}, there are $2d$ of them. 

The points $p^{(\pm i)}_j$ are the isotropic scratch points, that is, the indeterminacy points of $r: C \times D \vdash C \times D$. These are exactly the points $(c, [1: \pm i:0]) \in C \times D$ such that $c$ is an isotropic tangency point of $C$ of tangent slope $\pm i$. There are $d(d-1)$ of each kind, by \ref{GA_no_inf_isotropic}, \ref{GA_simple_isot}, and Lemma \ref{lemma_rh}.

\begin{defn} \label{def_mod_phase_space}
The \emph{modified billiards phase space} $P$ is the projective surface obtained from $C \times D$ by blowing up each of the $2d^2$ points
\[
p_1^{(\infty)}, \ldots p_{2d}^{(\infty)}, p^{(i)}_1, \ldots, p^{(i)}_{d(d-1)}, p^{(-i)}_1, \ldots, p^{(-i)}_{d(d-1)}.
\]
\end{defn}

We work throughout in a preferred basis for $\Num^1 P$, defined as follows.
Choose general points $c_0 \in C$ and $q_0 \in D$. Let
$$C^{(0)} = C \times q_0, \quad D^{(0)} = c_0 \times D.$$
For each $j$, let $E^{(\infty)}_j \subset P$ be the exceptional divisor above $p^{(\infty)}_j$. Similarly define $E^{(i)}_j$, $E^{(-i)}_j \subset P$.
Our basis shall be
\begin{equation} \label{eq_basis}
C^{(0)}, \quad D^{(0)}, \quad E_1^{(\infty)}, \ldots, E_{2d}^{(\infty)}, \quad E_1^{(i)}, \ldots, E_{d(d-1)}^{(i)}, \quad E_1^{(-i)}, \ldots, E_{d(d-1)}^{(-i)}.
\end{equation}
For convenience in indexing, we also sometimes denote these divisors by
$$\Delta_1, \ldots, \Delta_{2d^2 +2}.$$

\begin{lemma}
We have $\Num^1 P \cong \Z^{2 + 2d^2}$, via the choice of basis $\Delta_1, \ldots, \Delta_{2d^2 + 2}$. The intersection form on $\Num^1 P$ in our preferred basis is
\[
\small
J_P :=
\left(
\begin{array}{cc|c|c}
0    &  1     &  &   \\
1     & 0     &  &   \\ \hline
      &       &-1 &  \\ \hline
      &       &   &-1
\end{array}
\right)
\]
where the blocks are of size $2$, $2d$, and $2d(d-1)$.
\end{lemma}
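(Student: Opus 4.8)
The plan is to compute $\Num^1(C \times D)$ first and then track how it transforms under the blowup $\pi: P \to C \times D$ at the $2d^2$ scratch points. For the base surface, since $D$ is a smooth conic over a field of characteristic $\neq 2$ we have $D \cong \PP^1$, so $C \times D \to C$ is a trivial $\PP^1$-bundle over a smooth curve. The Néron--Severi group (hence $\Num^1$, which for divisors on a smooth projective surface is $\mathrm{NS}$ modulo torsion, and here there is none) of such a ruled surface is $\Z^2$, freely generated by a section class and a fiber class; these are exactly $C^{(0)} = C \times q_0$ and $D^{(0)} = c_0 \times D$. The potential ``correspondence'' classes are absent because one factor, being $\PP^1$, has trivial Jacobian. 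A direct check of intersection numbers on $C \times D$ — two disjoint horizontal fibers, two disjoint vertical fibers, and a single transverse point of intersection — gives $C^{(0)} \cdot C^{(0)} = D^{(0)} \cdot D^{(0)} = 0$ and $C^{(0)} \cdot D^{(0)} = 1$, i.e. the form $J$.

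For the blowup, I would invoke the standard description of $\Num^1$ under blowing up a smooth surface at finitely many distinct closed points: one obtains
$$\Num^1(P) \cong \pi^* \Num^1(C \times D) \oplus \bigoplus_k \Z[E_k],$$
where $\pi^*$ is an isometric embedding, each exceptional divisor satisfies $E_k^2 = -1$, distinct exceptional divisors are disjoint ($E_k \cdot E_\ell = 0$ for $k \neq \ell$), and $E_k \cdot \pi^* \alpha = 0$ for every class $\alpha$ on $C \times D$. By \ref{GA_simple_inf} and \ref{GA_no_inf_isotropic} there are exactly $2d$ scratch points at infinity, and by \ref{GA_no_inf_isotropic}, \ref{GA_simple_isot}, and Lemma \ref{lemma_rh} there are $2d(d-1)$ isotropic scratch points; so the exceptional part contributes a free rank-$2d^2$ summand carrying the form $-I_{2d^2}$, and $\Num^1(P) \cong \Z^{2 + 2d^2}$.

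The one point requiring care is that the basis classes named $C^{(0)}, D^{(0)}$ in \eqref{eq_basis} are the curves $C \times q_0$ and $c_0 \times D$ themselves on $P$, so I must check that their strict transforms coincide with the pullbacks $\pi^* C^{(0)}, \pi^* D^{(0)}$. This follows from generality of $c_0, q_0$: a scratch point at infinity has the form $(c, t(c))$ with $c \in C_\infty$, and an isotropic scratch point has the form $(c, \pm i)$. For general $q_0$ we have $q_0 \notin \{ t(c) : c \in C_\infty \} \cup \{ \pm i \}$, so $C \times q_0$ contains no scratch point; for general $c_0$ we have $c_0 \notin C_\infty$ and $c_0$ is not an isotropic tangency point, so $c_0 \times D$ contains no scratch point. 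Hence both curves avoid the blown-up locus, their strict and total transforms agree, and their mutual intersection numbers and their orthogonality to every $E_k$ are inherited unchanged from $C \times D$. Assembling the section/fiber block $J$ together with the two exceptional blocks $-I_{2d}$ (size $2d$) and $-I_{2d(d-1)}$ (size $2d(d-1)$) yields the block-diagonal matrix $J_P$.

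The only nontrivial input is the computation $\Num^1(C \times D) \cong \Z^2$ with form $J$, which hinges on $D \cong \PP^1$; the rest is routine blowup bookkeeping together with the generality check that $C^{(0)}$ and $D^{(0)}$ miss every scratch point. I therefore expect no serious obstacle, only the need to state the blowup formalism and the $\PP^1$-bundle fact cleanly.
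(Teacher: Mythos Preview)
Your argument is correct and follows essentially the same line as the paper's proof: compute the intersection form on $\Num^1(C\times D)$ in the basis $C^{(0)}, D^{(0)}$, then invoke the standard blowup description of $\Num^1$ together with the observation that $C^{(0)}$ and $D^{(0)}$ miss all scratch points. Your write-up is in fact more detailed than the paper's, which simply asserts that $C^{(0)}, D^{(0)}$ give the $2\times 2$ form $J$ and cites Hartshorne for the $(-1)$-curve facts; your explicit justification that $\Num^1(C\times D)\cong\Z^2$ via $D\cong\PP^1$ and your generality check for $c_0, q_0$ fill in points the paper leaves implicit.
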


\begin{proof}
We see this by starting with the intersection form on $\Num^1 C \times D$ with basis $C^{(0)}$ and $D^{(0)}$, which is given by
\[
\small
\begin{pmatrix}
    0 & 1 \\
    1 & 0
\end{pmatrix}.
\]
Exceptional divisors of blowups are $(-1)$-curves; see Hartshorne \cite[Chapter V]{Hartshorne}. Since $C^{(0)}$ and $D^{(0)}$, viewed in $C \times D$, do not pass through any blown-up points, we obtain $J_P$.
\end{proof}

\begin{prop} \label{prop_strict_s_matrix}
Let $C$ be a general degree $d \geq 2$ curve in $\PP^2$. Let $s: C \times D \vdash C \times D$ be the secant correspondence, and let $\hat{s}: P \vdash P$ be the strict transform of $s$ to the modified phase space. The matrix of $\hat{s}_* : \Num^1 P \to \Num^1 P$ is
\[
M_{\hat{s}} \colonequals 
\small
\left(
\begin{array}{cc|ccc|ccc}
d - 1 &  2     & 1   & \hdots & 1 &   & &   \\
0     &  d - 1 & 0   & \hdots & 0 &   & &\\ \hline
0     &  -1    & -1 & 0      & 0  &   & & \\
\vdots& \vdots & 0   & \ddots & 0 &   & & \\
0     & -1     & 0   & 0      & -1 &    &  &\\ \hline
      &        &     &        &    & 1 & 0 & 0 \\
      &        &     &        &    & 0 & \ddots & 0\\
      &        &     &        &    & 0 & 0 & 1 \\
\end{array}
\right).
\]
The blocks are of size $2$, $2d$, and $2d(d-1)$ left to right and top to bottom. The blocks shown as empty are zero matrices.
\end{prop}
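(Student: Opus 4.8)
The plan is to compute each column of $M_{\hat{s}}$, that is, the class $\hat{s}_*\Delta$ for every basis divisor $\Delta$, and to read off coefficients against the intersection form exactly as in the cheap bound (Theorem \ref{thm_cheap_upper_bound}). Since $J_P^2 = \Id$, the coefficient of $\Delta_b$ in $\hat{s}_*\Delta_a$ is recovered from the intersection numbers $\iota_{\hat{s}}(\Delta_a,\Delta_b) = \hat{s}_*\Delta_a \cdot \Delta_b$ via $M_{\hat{s}} = J_P M'_{\hat{s}}$, where $(M'_{\hat{s}})_{ba} = \iota_{\hat{s}}(\Delta_a,\Delta_b)$; concretely, the $C^{(0)}$-coefficient is obtained by pairing with $D^{(0)}$, the $D^{(0)}$-coefficient by pairing with $C^{(0)}$, and each $E$-coefficient by pairing with $-E$. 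The generality assumptions \ref{GA_integral}--\ref{GA_simple_isot} guarantee that every one of the $2d^2$ scratch points is \emph{basic}, so the local Lemmas \ref{lemma_secant_at_infty} and \ref{lemma_secant_at_isotropic} apply at each blown-up point.

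First I would compute the two families of exceptional columns directly from those lemmas. The scratch points are isolated, so each single-blowup computation transfers to the simultaneous blowup $P$: the action of $\hat{s}$ near $E_j^{(\infty)}$ sees only the blowup at $p_j^{(\infty)}$ in the domain, and its image is recorded as a strict transform in the codomain $P$. Lemma \ref{lemma_secant_at_infty} gives $\hat{s}_* E_j^{(\infty)} = (C\times q)_{\str}$; since the horizontal curve $C\times q$ (with $q$ the non-isotropic direction of $p_j^{(\infty)}$) is smooth and meets exactly one scratch point—namely $p_j^{(\infty)}$ itself, because a scratch point at infinity $(c',q')$ on it forces $c' = [q]$ and no isotropic scratch point lies on it as $q \notin D_\infty$—its strict transform has class $C^{(0)} - E_j^{(\infty)}$. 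Lemma \ref{lemma_secant_at_isotropic} gives $\hat{s}_* E_j^{(\pm i)} = E_j^{(\pm i)}$. Together these produce the entire middle and bottom blocks: the ``$1$ in the $C^{(0)}$ row, $-1$ on the diagonal'' pattern for the $E^{(\infty)}$ columns and the identity block for the $E^{(\pm i)}$ columns.

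Next I would fill in the $C^{(0)}$ and $D^{(0)}$ columns. The top-left $2\times 2$ block coincides with the cheap bound's $M_s = \begin{pmatrix} d-1 & 2 \\ 0 & d-1\end{pmatrix}$: the curves $C^{(0)}, D^{(0)}$ avoid every blown-up point for generic $c_0, q_0$, so $\hat{s}$ agrees with $s$ along them and the $C^{(0)}$- and $D^{(0)}$-coefficients are preserved under the blowdown $P\to C\times D$. For the exceptional coefficients of $\hat{s}_* C^{(0)}$ and $\hat{s}_* D^{(0)}$ I would invoke self-adjointness. Because $\Gamma_{\hat{s}}$ is the Zariski closure of $\Gamma_s$ over the complement of the (symmetric) set of scratch points and $s$ is self-adjoint (Proposition \ref{prop_basic_s}), $\Gamma_{\hat{s}}$ is symmetric, whence $\iota_{\hat{s}}(\Delta,\Delta') = \iota_{\hat{s}}(\Delta',\Delta)$. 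Thus the coefficient of any $E$ in $\hat{s}_*\Delta$ equals $-\iota_{\hat{s}}(\Delta,E) = -\iota_{\hat{s}}(E,\Delta) = -\hat{s}_* E\cdot\Delta$, already known from the previous step. For $\Delta = D^{(0)}$ this yields $-(C^{(0)} - E_j^{(\infty)})\cdot D^{(0)} = -1$ at each $E_j^{(\infty)}$ and $-E_j^{(\pm i)}\cdot D^{(0)} = 0$ at each isotropic exceptional divisor; for $\Delta = C^{(0)}$ all exceptional coefficients vanish since $(C^{(0)} - E_j^{(\infty)})\cdot C^{(0)} = 0$. Assembling the columns gives exactly $M_{\hat{s}}$.

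The main obstacle is pinning down the exceptional coefficients of $\hat{s}_* D^{(0)}$: geometrically this demands that the image curve $s(c_0\times D)$ pass through each of the $2d$ scratch points at infinity with multiplicity exactly $1$ while missing every isotropic scratch point. A direct attack would require a local multiplicity analysis at each point at infinity, where the line through $c_0$ of direction $t(c)$ meets $C$ at the point $c\in C_\infty$. The self-adjointness reduction is precisely what sidesteps this: symmetry of $\Gamma_{\hat{s}}$ transports the hard number $\iota_{\hat{s}}(D^{(0)}, E_j^{(\infty)})$ to the already-computed $\iota_{\hat{s}}(E_j^{(\infty)}, D^{(0)}) = 1$, and vanishing at isotropic scratch points follows from $\hat{s}_* E_j^{(\pm i)} = E_j^{(\pm i)}$ together with $E_j^{(\pm i)}\cdot D^{(0)} = 0$. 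As an internal check, I would verify that the resulting $M'_{\hat{s}} = J_P M_{\hat{s}}$ is symmetric, as self-adjointness of the $(d-1)$-to-$(d-1)$ correspondence $\hat{s}$ requires.
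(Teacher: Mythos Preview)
Your proposal is correct and follows essentially the same approach as the paper: compute the exceptional columns directly from Lemmas~\ref{lemma_secant_at_infty} and~\ref{lemma_secant_at_isotropic}, import the upper-left $2\times 2$ block from the cheap bound, and recover the remaining exceptional entries of the first two columns from the symmetry of $\iota_{\hat{s}}$ (i.e., self-adjointness of $\hat{s}$). Your justification that the single-point blowup computations transfer to $P$, and your check that $(C\times q)_{\str}$ has class $C^{(0)} - E_j^{(\infty)}$ because the horizontal fiber meets exactly one scratch point, match the paper's reasoning via the generality assumptions \ref{GA_simple_inf}--\ref{GA_simple_isot}.
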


\begin{proof}
We proceed as in the proof of Theorem \ref{thm_cheap_upper_bound}. Let $\Pi_1^*, \Pi_2^* : P \times P \to P$ be the projections to each factor. For any divisors $\Delta, \Delta'$, let $\iota_{\hat{s}}(\Delta, \Delta') = \Gamma_{\hat{s}} \cdot \Pi_1^* \Delta \cdot \Pi_2^* \Delta'$. Let
$$M'_{\hat{s}} \colonequals \left( \iota_{\hat{s}}(\Delta_i, \Delta_j ) \right)_{1 \leq i, j \leq 2d^2 + 2}.$$
The explicit formula for $J_P$ shows that the dual basis for $\Delta_1, \Delta_2, \ldots, \Delta_{2d^2 + 2}$ is given by intersecting against $\Delta_2, \Delta_1, -\Delta_3, \ldots, -\Delta_{2d^2 + 2}$, so
\begin{equation}
\label{eq_strict_s_intersection_matrix}
    M_{\hat{s}} = J_P M'_{\hat{s}}.
\end{equation}
Thus, it suffices to compute all the values $\iota_{\hat{s}}(\Delta_i, \Delta_j )$.

For the upper-left block, the calculation \eqref{eq_cheap_s_matrix} of $M'_s$ in the proof of Theorem \ref{thm_cheap_upper_bound} applies verbatim.

We compute the middle-left and lower-left blocks indirectly. We observe that, since $\iota_{\hat{s}}$ is symmetric, we can obtain these entries from those in the first two rows of $M'_{\hat{s}}$ and then using \eqref{eq_strict_s_intersection_matrix}. Thus we need only compute the second and third blocks of columns.

We compute the second block of columns directly. For each $j$ in $1 \leq j \leq 2d$, we claim
\begin{equation} \label{eq_strict_s_pull_infty}
    \hat{s}_* E^{(\infty)}_j = C^{(0)} - E_j^{(\infty)}.
\end{equation}
Indeed, say $p_j^{(\infty)} = (c_j, q_j)$. Let $P_j \to C \times D$ be the blowup of $C \times D$ at $p_j^{(\infty)}$. Let $\hat{s}_j$ be the strict transform of $s$ to $P_j$. Let $C_j$ denote the strict transform of $C \times q_j$ to $P_j$. Applying Lemma \ref{lemma_secant_at_infty}, on $P_j$, we have 
$$\hat{s_j}_* E^{(\infty)}_j = C_j$$
as divisors and
$$\hat{s_j} (E^{(\infty)}_j) = C_j$$
as sets.
By \ref{GA_simple_inf} and \ref{GA_no_inf_isotropic}, the curve $C_j$ does not pass through any other blown-up points of $P \to P_j$, so the blowup is an isomorphism above an open subset of $P_j$ containing $C_j$. Identifying $C_j$ with its pullback to $P$, we have
$$\hat{s}_* E^{(\infty)}_j = C_j.$$
Let $\equiv$ denote numerical equivalence. Since $q_j \equiv q_0$ on $D$, we have
$$C_j \equiv C \times q_0 = C^{(0)}$$
on $C \times D$. The divisor $C^{(0)}$, viewed on $C \times D$, passes through $p_j^{(\infty)}$ with multiplicity $1$, so on $P$, we have
$$C_j \equiv C^{(0)} - E_j^{(\infty)}.$$
This proves \eqref{eq_strict_s_pull_infty}.

Now we compute the third group of columns directly, by a similar argument. For each $j$ in $1 \leq j \leq d(d-1)$, we claim
\begin{equation}
    \label{eq_strict_s_pull_isot}
    \hat{s}_* E_j^{(i)} = E_j^{(i)}.
\end{equation}
The argument for the $E_j^{(-i)}$ is identical, so proving \eqref{eq_strict_s_pull_isot} suffices to compute the third block of columns. Say $p_j^{(i)} = (c_j, i)$. Let $P_j \to C \times D$ be the blowup of $C \times D$ at $p_j^{(i)}$. Let $\hat{s}_j$ be the strict transform of $s$ to $P_j$. Applying Lemma \ref{lemma_secant_at_isotropic}, we have
$$\hat{s_j}_* E^{(i)}_j = E^{(i)}_j$$
as divisors. Also by Lemma \ref{lemma_secant_at_isotropic}, if $f_j, g_j: \Gamma_{\hat{s}_j} \to P_j$ are the coordinate projections, then $f_j^{-1}(E_j^{(i)})$ is the union of a curve $E$ such that $g_j(E) = E_j^{(i)}$ and some points with $D$-coordinate $i$. Then, by \ref{GA_no_inf_isotropic} and \ref{GA_simple_isot}, the birational morphism $P \to P_j$ induces an isomorphism on a Zariski open set containing $\hat{s}_j(E_j^{(i)})$. This proves \eqref{eq_strict_s_pull_isot}.
\end{proof}

\begin{prop} \label{prop_strict_r_matrix}
Let $C$ be a general degree $d \geq 2$ curve in $\PP^2$. Let $r: C \times D \vdash C \times D$ be the reflection correspondence. Let $\hat{r}: P \vdash P$ be the strict transform of $r$ to the modified phase space. Then $\hat{r}$ is an automorphism of $P$, and the matrix of $\hat{r}_* : \Num^1 P \to \Num^1 P$ is
\[
M_r \colonequals
\small
\left(
\begin{array}{cc|ccc|ccc}
1     &  0     &     &       & & 0 & \hdots & 0  \\
d(d-1)&  1     &     &       & & 1 & \hdots & 1  \\ \hline
      &        &  1 & 0      & 0  &   & & \\
      &        & 0   & \ddots & 0 &   & & \\
      &        & 0   & 0      & 1 &  &  &\\ \hline
-1    & 0      &     &        &    & -1 & 0 & 0 \\
\vdots&\vdots  &     &        &    & 0 & \ddots & 0\\
-1    & 0      &     &        &    & 0 & 0 & -1 \\
\end{array}
\right)
\]
The blocks are of size $2$, $2d$, and $2d(d-1)$ left to right and top to bottom. The blocks shown as empty are zero matrices.
\end{prop}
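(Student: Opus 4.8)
The plan is to first prove that the strict transform $\hat{r}$ is an automorphism of $P$, and then to read off the matrix $M_r$ one column at a time, using the pushforward descriptions supplied by the local lemmas of Section \ref{sect_local} together with the computation of $r_*$ on $C \times D$ from Theorem \ref{thm_cheap_upper_bound}.

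First I would establish that $\hat{r}$ is an automorphism. Recall from Proposition \ref{prop_basic_r} that $r$ is a birational involution whose entire indeterminacy locus is the set of isotropic scratch points $p_j^{(\pm i)}$, and whose only contracted curves are the fibers $c_j \times D$ over isotropic tangency points $c_j$, each contracted to the corresponding $p_j^{(\pm i)}$. Because $r$ is an involution, the expansion at $p_j^{(\pm i)}$ and the contraction of $c_j \times D$ are mutually inverse, and Lemma \ref{lemma_reflect_isotropic} shows that a \emph{single} blowup of $p_j^{(\pm i)}$ resolves both at once: $\hat{r}$ carries $E_j^{(\pm i)}$ bijectively onto the strict transform of $c_j \times D$, so by the involution that strict transform is carried back onto $E_j^{(\pm i)}$ and neither curve is contracted. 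The scratch points at infinity $p_j^{(\infty)}$ lie on no contracted curve of $r$ and are not indeterminate; they are fixed points at which $r$ is a local isomorphism, and Lemma \ref{lemma_reflect_infty} shows $\hat{r}$ maps each $E_j^{(\infty)}$ isomorphically to itself. Away from the exceptional divisors $r$ is already a local isomorphism outside its scratch points, so $\hat{r}$ is regular there as well. Since Proposition \ref{prop_basic_r} lists \emph{all} indeterminacy points and contractions of $r$, these $2d^2$ blowups remove all of them and create no new ones; thus $\hat{r}$ has empty indeterminacy locus and no contracted curves, and being an involution it is an automorphism of $P$.

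Given that $\hat{r}$ is an automorphism, $\hat{r}_*\Delta$ is the class of the image curve $\hat{r}(\Delta)$ for each basis divisor $\Delta$, and three of the four column types are immediate. Since $r$ preserves each fiber $c \times D$, we get $\hat{r}_* D^{(0)} = D^{(0)}$; Lemma \ref{lemma_reflect_infty} gives $\hat{r}_* E_j^{(\infty)} = E_j^{(\infty)}$; and Lemma \ref{lemma_reflect_isotropic} gives $\hat{r}_* E_j^{(\pm i)} = (c_j \times D)_{\str} = D^{(0)} - E_j^{(\pm i)}$, because $c_j \times D$ has class $D^{(0)}$ and, by \ref{GA_no_inf_isotropic} and \ref{GA_simple_isot}, meets the blown-up locus only at $p_j^{(\pm i)}$ (the point $c_j$ is not an isotropic tangency of the opposite direction and is not at infinity). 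For the single remaining column I would use the relation $\pi_* \hat{r}_* = r_* \pi_*$ together with the cheap-bound computation $r_* C^{(0)} = C^{(0)} + d(d-1)D^{(0)}$: the image $r(C^{(0)})$ passes through each isotropic scratch point with multiplicity one, since as $c$ approaches an isotropic tangency point the reflected direction of $q_0$ limits to the isotropic direction, while by generality of $q_0$ the image avoids the scratch points at infinity. Hence $\hat{r}_* C^{(0)} = C^{(0)} + d(d-1)D^{(0)} - \sum_j (E_j^{(i)} + E_j^{(-i)})$. Alternatively, this last column is pinned down purely algebraically, since $\hat{r}$ is an involutive automorphism: the constraints $M_r^2 = \Id$ and $M_r^{T} J_P M_r = J_P$, combined with the three column types already found and the nondegeneracy of $J_P$, determine $\hat{r}_* C^{(0)}$ uniquely. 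Assembling the four columns in the basis \eqref{eq_basis} produces exactly $M_r$.

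The main obstacle is the automorphism claim in the first step: one must justify that blowing up precisely these points turns the birational involution $r$ into a genuine morphism. The delicate point is that $r$ contracts the \emph{entire} curve $c_j \times D$ to the single point $p_j^{(\pm i)}$, yet we blow up only that one point; the fact that a one-point blowup simultaneously resolves this whole-curve contraction and the reciprocal expansion is exactly the content of Lemma \ref{lemma_reflect_isotropic} together with the involution property, and this must be invoked carefully rather than taken for granted. The remaining multiplicity bookkeeping in the column computation, namely that $r(C^{(0)})$ meets each isotropic exceptional divisor with multiplicity one and misses those at infinity, rests on the genericity hypotheses \ref{GA_no_inf_isotropic} and \ref{GA_simple_isot}, exactly as in the parallel argument of Proposition \ref{prop_strict_s_matrix}.
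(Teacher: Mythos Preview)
Your proposal is correct and follows essentially the same route as the paper: the local Lemmas \ref{lemma_reflect_isotropic} and \ref{lemma_reflect_infty} determine the exceptional columns, the cheap-bound computation handles the $2\times 2$ block, and the involution plus the classification of contractions in Proposition \ref{prop_basic_r} yields the automorphism claim. The only organizational differences are that the paper establishes the automorphism claim \emph{after} the pushforward computations (deducing it from the case analysis rather than from the self-adjointness argument you give up front), and that for the column $\hat{r}_* C^{(0)}$ the paper fills in the remaining entries using the symmetry of the pairing $\iota_{\hat{r}}$ coming from self-adjointness of $\hat{r}$, rather than your isometry constraint $M_r^{T} J_P M_r = J_P$; both are equivalent bookkeeping devices. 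Your first alternative for that column (tracking $r(C^{(0)})$ through the isotropic scratch points ``with multiplicity one'') is the one place where a reader might ask for more detail, so it is wise that you supply the algebraic alternative.
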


\begin{proof}
    The proof structure closely follows that of Proposition \ref{prop_strict_s_matrix}. For the upper-left block, the computation \eqref{eq_cheap_r_matrix} of $M'_r$ in the proof of Theorem \ref{thm_cheap_upper_bound} applies verbatim. The remaining entries in the first block of columns can be obtained from the second and third blocks of columns by symmetry of $\iota_{\hat{r}}$. Thus it suffices to directly compute the second and third blocks of columns.

    For the second block of columns, for each $j$ in $1 \leq j \leq 2d$, we claim
    \begin{equation}
    \label{eq_strict_r_pull_infty}
    \hat{r}_* E^{(\infty)}_j = E^{(\infty)}_j.
    \end{equation}
    Let $P_j$ be the blowup of $C \times D$ at $p_j^{(\infty)}$. By Lemma \ref{lemma_reflect_infty}, we have
    $$\hat{r_j}_* E_j^{(\infty)} = E_j^{(\infty)}$$
    as divisors and
    $$\hat{r_j}( E_j^{(\infty)}) = E_j^{(\infty)}$$
    as sets. The birational morphism $P \to P_j$ induces an isomorphism on a Zariski open set containing $\hat{r_j}( E_j^{(\infty)})$. This proves \eqref{eq_strict_r_pull_infty}.

    For the third block of columns, for each $j$ in $1 \leq j \leq d(d-1)$, we claim
    \begin{equation}
        \label{eq_strict_r_pull_isot}
        \hat{r}_* E_j^{(i)} \equiv D^{(0)} - E_j^{(i)}.
    \end{equation}
    Indeed, say $p^{(i)}_j = (c_j, i)$. Let $P_j \to C \times D$ be the blowup of $C \times D$ at $p_j^{(i)}$. Let $\hat{r}_j$ be the strict transform of $r$ to $P_j$. Let $D_j$ denote the strict transform of $c_j \times D$ to $P_j$. Applying Lemma \ref{lemma_reflect_isotropic}, on $P_j$, we have
    $$ \hat{r_j}_* E_j^{(i)} = D_j$$
    as divisors and
    $$ {\hat{r}_j}(E_j^{(i)}) = D_j$$
    as sets. The birational morphism $P \to P_j$ is an isomorphism above an open subset of $P_j$ containing $D_j$. Identifying $D_j$ with its pullback to $P$, we have
    $$\hat{r}_* E_j^{(i)} = D_j.$$
    Since $c_j$ is numerically equivalent to $c_0$ on $C$, we have that $c_j \times D$ is numerically equivalent to $D^{(0)}$ on $C \times D$. And $c_j \times D$ intersects $p_j^{(i)}$ with multiplicity $1$, so pulling back to $P$, we get a numerical equivalence
    $$ D_j \equiv D^{(0)} - E_j^{(i)}. $$
    This proves \eqref{eq_strict_r_pull_isot}. The case for $E_j^{(-i)}$ follows by symmetry.

    The fact that $\hat{r}$ is an automorphism is a consequence of the case-by-case analysis above. We know that $\hat{r}$ is a birational involution, so it suffices to show that it has no contracted curves. If a curve is contracted by $\hat{r}$, it is either the strict transform of a curve contracted by $r$, or it is an exceptional divisor of $P \to C \times D$. But the contracted curves of $r$ are mapped onto exceptional divisors by $\hat{r}$, and we showed that no exceptional divisor is contracted by $\hat{r}$.
\end{proof}

\begin{prop} \label{prop_strict_b_matrix}
    Let $C$ be a general degree $d \geq 2$ curve in $\PP^2$. Let $b = r \circ s$ be the billiards correspondence on $C \times D$. Let $\hat{b} : P \vdash P$ be the strict transform of $b$ to the modified phase space. Then the matrix of $\hat{b}_*: \Num^1 P \to \Num^1 P$ is
    \begin{equation} \label{eq_prod_matrix}
M_{\hat{b}} = M_{\hat{r}} M_{\hat{s}} = \quad \small \left(
\begin{array}{cc|ccc|ccc}
d-1     &  2              &  1     & \hdots     &1 & 0 & \hdots & 0  \\
d(d-1)^2& (2d+1)(d-1)     &  d(d-1)&  \hdots    &d(d-1) & 1 & \hdots & 1  \\ \hline
 0     &  -1      & -1 & 0      & 0  &   & & \\
\vdots & \vdots &  0   & \ddots & 0 &   & & \\
 0     & -1      & 0   & 0      & -1 &  &  &\\ \hline
-(d-1)& -2      &-1   & \hdots & -1 & -1 & 0 & 0 \\
\vdots&\vdots  &\vdots& \ddots& \vdots& 0 & \ddots & 0\\
-(d-1)& -2      &-1   &\hdots& -1   & 0 & 0 & -1 \\
\end{array}
\right).
\end{equation}
The blocks are of size $2$, $2d$, and $2d(d-1)$ left to right and top to bottom. The blocks shown as empty are zero matrices.
\end{prop}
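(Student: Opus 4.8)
The plan is to recognize that the asserted identity $M_{\hat b} = M_{\hat r} M_{\hat s}$ splits into two essentially independent parts: a functoriality statement $\hat b_* = \hat r_* \circ \hat s_*$ at the level of linear maps on $\Num^1 P$, and a purely mechanical block matrix multiplication of the matrices produced in Proposition \ref{prop_strict_s_matrix} and Proposition \ref{prop_strict_r_matrix}. The latter is routine: working block by block against the sizes $2, 2d, 2d(d-1)$, one checks that left-multiplying $M_{\hat s}$ by $M_{\hat r}$ reproduces \eqref{eq_prod_matrix}. For example, the $(2,2)$ entry evaluates to $2d(d-1) + (d-1) = (2d+1)(d-1)$, and each row of the lower block contributes $-(d-1)$ in its first column, exactly as displayed. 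So the real content lies in the functoriality.

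First I would establish that the strict transform $\hat b$ coincides with the composite $\hat r \circ \hat s$ of strict transforms. This is the step requiring the most care, since strict transform does not commute with composition in general. The decisive fact is that $\hat r$ is an automorphism of $P$ (Proposition \ref{prop_strict_r_matrix}): because its graph is the graph of an isomorphism, the composition is clean, $\Gamma_{\hat r \circ \hat s} = (\Id_P \times \hat r)(\Gamma_{\hat s})$, with no components created or destroyed in the fiber product. I would then appeal to irreducibility. As shown in the proof of Proposition \ref{prop_b_basic}, $\Gamma_s$ is irreducible for a smooth irreducible $C$; hence so is its strict transform $\Gamma_{\hat s}$, its image $(\Id_P \times \hat r)(\Gamma_{\hat s}) = \Gamma_{\hat r \circ \hat s}$ under the isomorphism $\Id_P \times \hat r$, and likewise the strict transform $\Gamma_{\hat b}$ of $\Gamma_b = (\Id \times r)(\Gamma_s)$. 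Over the dense open set $U \subseteq P$ lying away from every exceptional divisor and from the indeterminacy of $r$ and $s$, the blowdown is an isomorphism and $\hat b$, $\hat r \circ \hat s$, and $b = r \circ s$ all agree. Two reduced irreducible surfaces in $P \times P$ that agree on a dense open subset are equal, so $\hat b = \hat r \circ \hat s$ as correspondences, with multiplicity one.

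Next I would apply Theorem \ref{thm_dd_props}(\ref{it_functorial}), whose hypothesis is that no contraction of $\hat s$ maps into an indeterminacy point of $\hat r$. Since $\hat r$ is an automorphism, $\Ind \hat r = \emptyset$, so the hypothesis holds vacuously, regardless of the new contractions that $\hat s$ acquires over the isotropic exceptional divisors in Lemma \ref{lemma_secant_at_isotropic}. Therefore
$$\hat b_* = (\hat r \circ \hat s)_* = \hat r_* \circ \hat s_*,$$
and reading off matrices in the preferred basis \eqref{eq_basis} gives $M_{\hat b} = M_{\hat r} M_{\hat s}$. Combining this with the block computation of the first paragraph yields \eqref{eq_prod_matrix}.

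I expect the main obstacle to be the identification $\hat b = \hat r \circ \hat s$: one must rule out that passing to the modified phase space introduces spurious components of the billiards graph over the exceptional locus, while also confirming that the composite retains the components predicted by the local lemmas of Section \ref{sect_local}. The automorphism property of $\hat r$ together with the irreducibility of $\Gamma_s$ is precisely what makes this clean. Without $\hat r$ regular, one would instead have to track the non-functoriality correction terms coming from contractions of $\hat s$ landing in $\Ind \hat r$ — exactly the kind of correction the modified phase space was constructed to eliminate.
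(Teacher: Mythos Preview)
Your argument is correct and in fact slightly cleaner than the paper's own proof, though the underlying structure is the same. The paper does not use the automorphism property of $\hat r$ to make the functoriality hypothesis of Theorem \ref{thm_dd_props}(\ref{it_functorial}) vacuous; instead it explicitly enumerates the contractions of $\hat s$ (coming from Lemma \ref{lemma_secant_at_isotropic}: each isotropic exceptional divisor $E_j^{(\pm i)}$ is contracted to the points $(c',\pm i)$ with $c' \in T_c C \cap C \smallsetminus \{c\}$) and then invokes the generality assumptions \ref{GA_no_inf_isotropic}, \ref{GA_simple_isot} to check that none of these points $(c',\pm i)$ is an isotropic scratch point, hence none lies in $\Ind \hat r$. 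Your shortcut---$\hat r$ is already known to be an automorphism from Proposition \ref{prop_strict_r_matrix}, so $\Ind \hat r = \emptyset$---bypasses this case analysis entirely, at the cost of relying on the full strength of that proposition. The paper's route has the minor advantage of making the geometric picture of the contractions explicit, which is consonant with the local analysis of Section \ref{sect_local}; yours has the advantage of being shorter and of isolating exactly which earlier fact is doing the work.

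You are also more careful than the paper about the identity $\hat b = \hat r \circ \hat s$ of correspondences (as opposed to merely $\hat b_* = \hat r_* \hat s_*$): the paper's proof passes over this silently, while your irreducibility-and-automorphism argument spells it out. That step is genuine, and your treatment of it is sound.
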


\begin{proof}
    By our generality assumptions,  Lemma \ref{lemma_secant_at_isotropic}, and Lemma \ref{lemma_secant_at_infty}, the contractions of $\hat{s}$ are all of the following form: if $\ell$ is an isotropic tangent line of $C$ at $c$, then there is a contraction
    $$((E_j^{\pm i}, \pm i), (c', \pm i)),$$
    for each intersection point
    $$c' \in \ell \cap C \smallsetminus c.$$
    Again by generality, no such point $(c', \pm i)$ is an indeterminacy point of $\hat{r}$. So, by Theorem \ref{thm_dd_props} (\ref{it_functorial}), we have
    $$\hat{b}_* = \hat{r}_* \hat{s}_*.$$
    Then the result follows from matrix multiplication.
\end{proof}

The following lemma provides the crucial estimate of the spectral radius of $\hat{b}_*$. The leading eigenvalue turns out to be a real cubic integer. Our proof of this is ad hoc; it would be very interesting to understand the geometry behind it.

\begin{lemma} \label{lemma_main_bound}
    Let $C$ be a general curve of degree $d \geq 2$ in $\PP^2$. Then
    $$\rad \hat{b}_* = \rho_d,$$
    where $\rho_d$ is the largest root of the cubic polynomial
    $$ \Phi_d(\lambda) \colonequals \lambda^3 - (2 d^2 - d - 3) \lambda^2 + (2d^2 - 4d + 3) \lambda - (d - 1). $$
    The algebraic integer $\rho_d$ has the following properties:
    \begin{enumerate}
        \item The algebraic integer $\rho_d$ is a real number in the interval $(2d^2 - d - 5, 2d^2 - d - 3)$.
        \item When $d = 2$, we have $\rho_d = 1$. 
        \item If $d \geq 3$, then the polynomial $\Phi_d(\lambda)$ is irreducible and $\rho_d$ is a Pisot number.
        \item If $d = 3$, the polynomial $\Phi_d(\lambda)$ has one real root, equal to $\rho_d$. If $d \geq 4$, the polynomial $\Phi_d(\lambda)$ is real-rooted.
    \end{enumerate}
\end{lemma}

\begin{proof}
Let $M = M_{\hat{b}}$ be as defined in \eqref{eq_prod_matrix}. Let $\chi_M(\lambda)$ be the characterstic polynomial of $M$. We claim that
\begin{equation} \label{eq_big_char_poly}
    \chi_M(\lambda) = \Phi_d(\lambda) (\lambda + 1)^{2d^2 - 2} (\lambda - (d - 1)).
\end{equation}
To prove \eqref{eq_big_char_poly}, we replace $M$ by a conjugate with a simpler form.
Define a matrix
\[
\Psi =
\tiny
\left(
\begin{array}{cc|cccc|cccc}
1     &  0  &&&&&&&     \\
0     &  1  &&&&&&&     \\ \hline
&& 1     & 1      &  \hdots  & 1 &   \\
&&      & -1     & 0        & 0  &  \\
&&     &       & \ddots  & 0  &  \\
&&      &      &  &       -1 & &  \\ \hline
&&&&&& 1 & 1 & \hdots & 1 \\
&&&&&&   &-1 &        &   \\
&&&&&&   &   & \ddots &   \\
&&&&&&   &   &        &-1 \\
\end{array}
\right).
\]
We note that $\Psi = \Psi^{-1}$, although this is not needed in our computation.
Let $\Pi$ be the permuation matrix associated to the permutation with cycle presentation
$$ (4 \quad 5 \quad \dots \quad 2 + 2d + 1).$$

Conjugating $M$ by $\Psi$, then by $\Pi$, gives a block matrix of the form
\[
\Pi \Psi M \Psi^{-1} \Pi^{-1} = 
\left(
\begin{array}{c|c}
A & 0 \\ \hline
B & -1 \\
\end{array}
\right).
\]
Here $A$ is the $4 \times 4$ matrix
\[
\Pi \Psi M \Psi^{-1} \Pi^{-1} =
\small
\left(
\begin{array}{cccc}
d - 1      & 2             & 1        & 0  \\
d(d-1)^2   & (2d + 1)(d-1) & d(d-1)   & 1  \\
0          & -2d           & -1       & 0  \\
-2d(d-1)^2 & -4d(d-1)      & -2d(d-1) & -1 \\
\end{array}
\right),
\]
and $B$ is some matrix, the entries of which do not affect our computation. More importantly, the upper-right block is $0$, and the lower-right block is the negative of the identity matrix. It follows that the characteristic polynomial of $M$ factors:
$$\chi_M(\lambda) = (\lambda + 1)^{2d^2 - 2} \chi_A (\lambda).$$
The characteristic polynomial of $A$ can be computed directly:
$$ \chi_A(\lambda) = (\lambda - (d - 1)) \left(\lambda^3 - (2 d^2 - d - 3) \lambda^2 + (2d^2 - 4d + 3) \lambda - (d - 1)\right). $$
The cubic factor is the polynomial $\Phi_d(\lambda)$, proving \eqref{eq_big_char_poly}.

If $d = 2$, then 
$$\chi_M(\lambda) = (\lambda + 1)^6 (\lambda - 1)^4,$$
so $\rad M = 1$.

If $d \geq 3$, we claim that the spectral radius of $M$ is the maximum absolute value among roots of $\Phi_d(\lambda)$, and that this value is between $2d^2 - d - 5$ and $2d^2 - d - 3$. We check all claims for the case $d = 3$ individually using a computer algebra system.

If $d \geq 4$, then $\Phi_d(\lambda)$ has only real roots. Indeed, we can test directly that $\Phi_d(0)$ and $\Phi_d(1)$ are negative for all $d > 2$. We also may check that $\Phi_d(1/2)$ is a quadratic in $d$ that is positive for all $d \geq 4$. Thus, if $d \geq 4$, then $\Phi_d(\lambda)$ has a real root strictly between $0$ and $1/2$ and another real root strictly between $1/2$ and $1$. Finally, we know the sum of the real roots is the trace $2d^2 - d - 3$ of $\Phi_d(\lambda)$, and we found a pair of real roots with sum strictly between $0$ and $2$. The remaining root therefore is also real and is in the interval $(2d^2 - d - 5, 2d^2 - d - 3)$. Since $d \geq 4$, this exceeds the next-highest eigenvalue of $\chi_M(\lambda)$, which is $d - 1$.

It follows from this explicit description that $\rho_d$ is Pisot for all $d \geq 4$ (that is, its conjugates have absolute value less than $1$), and the case $d = 3$ can be checked directly.

To finish the proof of (3), we claim that if $d \geq 3$, the cubic $\Phi_d(\lambda)$ is irreducible. We check this by inspection for $d = 3$, then consider the case $d \geq 4$. Say $\Phi_d(\lambda)$ has three linear factors over $\Q$, hence over $\Z$; then $\rho_d$ divides the constant coefficient $d - 1$, but this contradicts our bounds on $\rho_d$. If $\Phi_d(\lambda)$ has a quadratic and linear factor over $\Q$, hence over $\Z$, then our bounds on the two roots besides $\rho_d$ imply that their product is not in $\Z$; and neither of these two roots are in $\Z$, so neither do they arise as linear factors. Thus $\Phi_d(\lambda)$ is indeed irreducible.

We thank Curt McMullen for the Pisot observation and the irreducibility observation above.
\end{proof}

\begin{remark}
We found the conjugating matrices $\Phi$ and $\Pi$ by guesswork with examples for low values of $d$ in Sage. The idea to look for a simpler conjugate of $M$ came from the fact that, for all low values of $d$ that we tried, the characteristic polynomial $\chi_M(\lambda)$ factored.
\end{remark}

We now have all the pieces for our main result.
\begin{thm}[= Theorem \ref{thm_main}] \label{thm_expensive_upper_bound}
Let $C$ be a smooth curve of degree $d \geq 2$ in $\PP^2$, and let $b: C \times D \vdash C \times D$ be the billiards correspondence. Let $\lambda_0(b)$, $\lambda_1(b)$, and $\lambda_2(b)$ be the dynamical degrees of $b$. Then
\begin{equation} \label{eq_dd_0_and_2}
    \lambda_0(b) = \lambda_2(b) = d - 1
\end{equation}
and
$$\lambda_1(b) \leq \rho_d,$$
where $\rho_d$ is the cubic integer of Lemma \ref{lemma_main_bound}.
In particular, when $d = 2$, we have $\lambda_1(b) = 1$.
\end{thm}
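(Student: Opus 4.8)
The plan is to read off all three assertions from the spectral computation of Lemma~\ref{lemma_main_bound} and the formal machinery of Theorem~\ref{thm_dd_props}, working under the standing generality hypotheses \ref{GA_integral}--\ref{GA_simple_isot}. The equality \eqref{eq_dd_0_and_2} requires essentially no work: by Proposition~\ref{prop_b_basic}(1) the billiards correspondence $b$ is $(d-1)$-to-$(d-1)$, and by definition the extreme dynamical degrees $\lambda_0$ and $\lambda_2$ of a $d_1$-to-$d_2$ surface correspondence are exactly its two topological degrees, so $\lambda_0(b) = \lambda_2(b) = d-1$.

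For the middle degree I would exploit that $\lambda_1$ is a birational conjugacy invariant (Theorem~\ref{thm_dd_props}(\ref{it_birat_inv})). The modified phase space is the blowup $P \to C \times D$ at the scratch points (Definition~\ref{def_mod_phase_space}), so this blowdown is a birational morphism conjugating the strict transform $\hat{b}$ to $b$; hence $\lambda_1(b) = \lambda_1(\hat{b})$. Applying the general inequality of Theorem~\ref{thm_dd_props}(\ref{it_dd_bound}) on $P$ gives $\lambda_1(\hat{b}) \leq \rad \hat{b}_*$, and Lemma~\ref{lemma_main_bound} evaluates $\rad \hat{b}_* = \rho_d$, yielding $\lambda_1(b) \leq \rho_d$; the strict inequality $\rho_d < 2d^2 - d - 3$ is also supplied by that lemma. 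I would stress that this is genuinely only an inequality: we do not claim $b$ is algebraically stable on $P$, so equality via Theorem~\ref{thm_dd_props}(\ref{it_alg_stable_dd}) is unavailable. The entire improvement over the cheap bound of Theorem~\ref{thm_cheap_upper_bound} comes precisely from evaluating $\rad \hat{b}_*$ on the better model $P$, where it collapses to the cubic integer $\rho_d$ rather than growing like $2d^2$.

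For the case $d = 2$, Lemma~\ref{lemma_main_bound} gives $\rho_2 = 1$, so the previous paragraph yields $\lambda_1(b) \leq 1$. The reverse inequality is the universal lower bound $\lambda_1 \geq 1$ valid for any dominant correspondence: each $\deg_\Delta(b^m) = (b^m)_* \Delta \cdot \Delta$ is the intersection of a nonzero effective divisor with an ample divisor on a smooth projective surface, hence a positive integer, so $\lambda_1(b) = \lim_m \deg_\Delta(b^m)^{1/m} \geq 1$. Combining the two bounds gives $\lambda_1(b) = 1$, recovering the integrability of the elliptic billiard.

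Under the standing generality assumptions the argument above is thus a short assembly of results already in hand. The step I expect to be the real obstacle lies in removing those assumptions to reach the full strength of Theorem~\ref{thm_main} for every nondegenerate curve: here one must invoke the lower semicontinuity of dynamical degrees in algebraic families, under which the value $\rho_d$ attained on a (very) general member of the family of degree-$d$ curves dominates $\lambda_1(b)$ for every special member. This semicontinuity is delicate, since dynamical degrees of correspondences need not vary continuously and one must confirm that billiards fits the specialization framework of \cite{MR4048444}; everything preceding it reduces to citing the established lemmas.
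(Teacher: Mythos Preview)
Your proposal is correct and follows essentially the same route as the paper's proof: topological degrees for $\lambda_0,\lambda_2$; birational invariance to pass to the modified phase space $P$; the spectral radius inequality $\lambda_1(\hat b)\le\rad\hat b_*=\rho_d$ from Lemma~\ref{lemma_main_bound}; and a specialization argument to remove the generality hypotheses \ref{GA_integral}--\ref{GA_simple_isot}. Your treatment of the $d=2$ case is actually slightly more explicit than the paper's, which asserts $\lambda_1(b)=1$ from $\rho_2=1$ without spelling out the matching lower bound.
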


\begin{proof}
Since $b$ is a $(d-1)$-to-$(d-1)$ correspondence, we have \eqref{eq_dd_0_and_2}. 

Let $C$ be a curve satisfying \ref{GA_smooth}-\ref{GA_simple_isot}. Dynamical degrees are birational conjugacy invariants (Theorem \ref{thm_dd_props} (\ref{it_birat_inv})), so $\lambda_1(b) = \lambda_1(\hat{b})$. We have $\hat{b} = \hat{r} \circ \hat{s}$.
Then, by Theorem \ref{thm_dd_props} (\ref{it_dd_bound}), and Lemma \ref{lemma_main_bound}, we have
$$\lambda_1(\hat{b}) \leq \rad \hat{b}_* = \rho_d.$$
Since $\rho_2 = 1$, in degree $2$, we have $\lambda_1(b) = 1$.

It remains to extend the result to all smooth algebraic billiard curves, not just general ones. Let $\mc{C} \to \mc{B}$ be the family of smooth degree $d$ plane curves in $\PP^n$. Let $K$ be the function field of $\mc{B}$, and let $\mc{C}_\eta$ denote the generic geometric fiber of the family of all plane curves of degree $d$. Then $\mc{C}_\eta$, viewed as a curve over $K$, is a degree $d$ curve satisfying \ref{GA_smooth}-\ref{GA_simple_isot}. Similarly, let $D_\eta$ denote $D$ defined over the base field $K$. Let $b_\eta : C_\eta \times D_\eta \vdash C_\eta \times D_\eta$ be the billiards correspondence on $C_\eta \times D_\eta$, over $K$. Then our results so far show that $\lambda_1 (b_\eta) \leq \rho_d$.

Next, we invoke Theorem \ref{thm_dd_props} (\ref{it_specialization}) that ``in a smooth family of dynamical systems, dynamical degrees cannot increase with specialization''. The family $\mc{C} \to \mc{B}$ is projective, smooth, and surjective of relative dimension 1. Thus $\mc{C} \times D \to \mc{B}$ is projective, smooth, and surjective of relative dimension $2$. For any $p \in \mc{B}$, the induced correspondence $b_p : \mc{C}_p \times D \vdash \mc{C}_p \times D$ on the special fiber is simply the billiards correspondence on $\mc{C}_p \times D$. Thus, by Theorem \ref{thm_dd_props} (\ref{it_specialization}) applied to the restriction of $\mc{C} \times D \to \PP^N_k$ to $\mc{B}$ in the base, we have
$$\lambda_1 (b_p) \leq \lambda_1(b_\eta).$$
\end{proof}

\begin{remark} \label{rem_algstab}
If $C$ fails the generality criteria \ref{GA_smooth}-\ref{GA_simple_isot}, many of the specific intersection numbers we calculate will change, and it seems likely that $\lambda_1(b) < \rho_d$. For instance, it seems likely that if $C$ is a union of confocal conics, the dynamical degree should be quite small.

On the other hand, Conjecture \ref{conj_main} speculates that if $C$ is very general, then $P$ is algebraically stable, so $\lambda_1(b) = \rho_d$. The heuristic that justifies this conjecture is as follows. The forward orbits of contracted curves on $P$ start with direction $\pm i$. Therefore, by Lemma \ref{prop_basic_r}(\ref{prop_basic_r_isot}), these orbits have isotropic directions for all time, switching back and forth between $\pm i$. This reduces the study of destabilizing orbits on $P$ to the restriction of $b$ to $C \times D_\infty$. Roughly speaking, a destabilizing orbit of $b$ would start and end at critical points of $b|_{C \times D_\infty}$. Specific choices of the curve $C$ will have these critical orbits, but the generic curve presumably does not. However, the exponential growth in orbit size makes understanding critical orbits of correspondences rather more difficult than in the case of maps.
\end{remark}

\subsection{Conics}
The billiards map in an ellipse is famously a completely integrable system; see \cite{MR2168892}. The classical Poncelet porism is a consequence. Specializing our arguments to degree $d = 2$ and appealing to a classification theorem, we obtain a new proof of complete integrability of billiards in quadrics that works over any algebraically closed field of characteristic not equal to $2$. For other algebraic approaches to elliptic billiards and Poncelet's porism, see \cite[Chapter 11.2]{MR2683025} and \cite{MR4238126, MR497281}.

\begin{cor} \label{cor_poncelet}
Let $C$ be an irreducible and reduced quadric in $\PP^2_k$ that does not contain $[1: i : 0]$ or $[1 : -i : 0]$. Then the modified phase space $P$ is a rational elliptic surface, and $\hat{b}$ is a regular automorphism of $P$ preserving the genus $1$ fibration.
\end{cor}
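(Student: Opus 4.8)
The plan is to specialize the machinery already built to $d = 2$, where both $s$ and $r$ become birational involutions (each is $(d-1)$-to-$(d-1) = 1$-to-$1$ and self-adjoint), so that $b = r \circ s$ is an honest birational map. First I would show that the strict transforms $\hat s$ and $\hat r$ to the modified phase space $P$ are automorphisms. For $\hat r$ this is exactly Proposition \ref{prop_strict_r_matrix}. For $\hat s$ the point is that it has \emph{no} contracted curve when $d = 2$: the contractions of $s$ on $C \times D$ coming from scratch points at infinity are resolved by the blowup (Lemma \ref{lemma_secant_at_infty} shows each $E_j^{(\infty)}$ maps onto the strict transform of $C \times q_j$, so the latter is no longer contracted), while the only contractions the blowup of isotropic scratch points could create are indexed by the points of $\ell \cap C \smallsetminus \{c\}$ for $\ell$ an isotropic tangent line (Lemma \ref{lemma_secant_at_isotropic}, as recorded in the proof of Proposition \ref{prop_strict_b_matrix}); but a tangent line to a smooth conic meets it only at the point of tangency, so this set is empty. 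Hence $\hat s$ is a birational involution of $P$ with no contracted curve, so it is an automorphism, and $\hat b = \hat r \circ \hat s$ is a regular automorphism of $P$.

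Next I would record the numerical facts. Since $C \cong \PP^1$ and $D \cong \PP^1$, the surface $C \times D$ is a smooth quadric with $(-K)^2 = 8$, and $P$ is its blowup at the $2d^2 = 8$ distinct scratch points, so $P$ is a smooth rational surface with $K_P^2 = 8 - 8 = 0$ and $\Num^1 P$ of rank $10$. As $\hat b$ is an automorphism it fixes the canonical class, so $-K_P$ is a $\hat b$-invariant class with $(-K_P)^2 = 0$; Riemann--Roch together with $h^2(-K_P) = h^0(2K_P) = 0$ gives $h^0(-K_P) \geq 1$, so $-K_P$ is effective. By Lemma \ref{lemma_main_bound} we have $\rad \hat b_* = \rho_2 = 1$, so $\lambda_1(\hat b) = 1$ and $\hat b$ has zero entropy; moreover the characteristic polynomial of $\hat b_*$ is $(\lambda - 1)^4 (\lambda + 1)^6$, so $\hat b_*$ is quasi-unipotent with all eigenvalues $\pm 1$.

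The conclusion would then follow from the classification of zero-entropy automorphisms of rational surfaces (Gizatullin; Cantat; Diller--Favre). Provided $\hat b_*$ has infinite order, an automorphism of a smooth projective rational surface with $\lambda_1 = 1$ acts on $\Num^1 P \otimes \R \cong \R^{1,9}$ as a parabolic isometry, which, by the signature $(1,9)$ of the intersection form, has its unique nontrivial Jordan block of size $3$ at the eigenvalue $1$; such an automorphism preserves a genus-$1$ fibration whose fiber class spans the fixed isotropic ray. I would identify this ray with $-K_P$: it is $\hat b$-invariant, isotropic, effective, and nef, hence a positive multiple of the primitive fiber class, so $|-K_P|$ is the genus-$1$ pencil, $P$ is a relatively minimal rational elliptic surface, and $\hat b$ preserves its fibration.

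The main obstacle is establishing that $\hat b_*$ has infinite order with the size-$3$ Jordan block, equivalently that the $8$ scratch points are the base locus of an anticanonical pencil rather than of a single $(2,2)$-curve --- that is, that they fail to impose independent conditions on $|{-K}|$ by exactly one. This is the arithmetic shadow of integrability (the confocal-caustic foliation of the elliptic billiard), and I would verify it by a direct computation with the explicit matrix $M_{\hat b}$ of Proposition \ref{prop_strict_b_matrix} at $d = 2$, checking that $(\hat b_* - \mathrm{Id})^2 \neq 0$ on the generalized $1$-eigenspace while $(\hat b_* - \mathrm{Id})^3 = 0$ there. One should also confirm that the $8$ scratch points are genuinely distinct under the hypothesis that $C$ is an irreducible reduced conic avoiding $[1 : \pm i : 0]$, so that $P$ is really the blowup of $C \times D$ at $8$ reduced points.
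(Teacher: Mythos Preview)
Your proposal is correct and follows essentially the same approach as the paper: both prove $\hat r$ and $\hat s$ are automorphisms (you give a cleaner reason for $\hat s$, noting that the isotropic tangent to a smooth conic has no residual intersection), then use the Jordan block of size $3$ in $M_{\hat b}$ at $d=2$ together with the Gizatullin/Diller--Favre growth classification to conclude that $\hat b$ preserves a genus-$1$ fibration. Your write-up is more careful than the paper's about the logical order---deriving that $P$ is a rational elliptic surface from the dynamics via the anticanonical class, rather than asserting it up front from the $8$-point blowup---and you correctly flag the size-$3$ Jordan block as the crux; the paper simply asserts this Jordan form, so your plan to verify it directly from $M_{\hat b}$ is exactly what is needed.
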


\begin{proof}
    It is clear that $P$ is a rational surface, since $P$ is birational to $\PP^1 \times \PP^1$. It is the blowup of $\PP^1 \times \PP^1$ at $8$ points, so it is a rational elliptic surface. Thus it has a unique elliptic fibration.
    
    The correspondences $r$ and $s$ are each birational maps, since the degree of $C$ is $2$. The assumptions on $C$ make it general in the sense of \ref{GA_smooth}-\ref{GA_simple_isot}, so the results of this section apply. In Proposition \ref{prop_strict_r_matrix}, we proved that $\hat{r}$ is an automorphism. And $\hat{s}$ is also an automorphism when $d = 2$, by Lemmas \ref{lemma_secant_at_isotropic} and \ref{lemma_secant_at_infty}. Thus $\hat{b} = \hat{r} \circ \hat{s}$ is an automorphism. 
    The Jordan normal form of $M_{\hat{b}}$, calculated in \eqref{eq_prod_matrix}, has one block of size $3$ and seven blocks of size $1$. Since action on cohomology is functorial for morphisms, the growth of $\rad \hat{b}^m$ is quadratic and not linear. The growth classification theorem of automorphisms of rational surfaces then implies that $\hat{b}$ preserves the elliptic fibration \cite[Theorem 4.6]{MR3821147}.
\end{proof}

\begin{remark}
    The invariant fibration is also a geometric consequence of the existence of two invariant $2$-forms. The first is $\omega$ (Theorem \ref{thm_invariant_form}). The second is constructed as follows. The canonical class on $C \times D$ is of type $(-2,-2)$. The curve $\Delta = -(C \times D_\infty \cup C_\infty \times D)$ is of this type, so there is a form $\eta$ with $\dv \eta = \Delta$. The divisor of $\pi^*\eta$ is (a local calculation shows) the strict transform of $\Delta$. Then, since $\hat{b}$ is regular, the same is true of $\dv \hat{b}^* \pi^* \eta$, so we have $\pi^* \eta = c_0 \hat{b}^* \pi^* \eta$ for some $c_0 \in k^*$. Imitating \cite[Section 7.1]{cantat2023dynamics} shows $c_0 = 1$, so $\omega/\eta$ is invariant. Then the divisor calculation of Proposition \ref{prop_div} shows $\omega/\eta$ is nonconstant. 
\end{remark}

\subsection{Real applications}

We end with the proofs of Corollary \ref{cor_top} on topological entropy and Corollary \ref{cor_orbit_growth} on orbit growth.

Making sense of these topological entropy in this context is quite delicate, because the domains of definition are not compact. Here, by $h_{\Top}(b_\Omega)$, we mean the Bowen-Dinaburg definition, based on separated sets. By $h_{\Top}(b)$, we mean Friedland entropy, which extends that of Bowen-Dinaburg. For details on the relationships between these entropies, see \cite{MR2410955}.

\begin{proof}[Proof of Corollary \ref{cor_top}]
Suppose that $\Omega$ is a region such that the Zariski closure of $\partial \Omega$ in $\PP^2_\C$ is a smooth algebraic curve $C_0$ of degree $d \geq 2$. Then the graph of the map $b_\Omega$ is a subset of the graph of the correspondence $b$, by Proposition \ref{prop_b_basic}. Thus
\begin{equation} \label{eq_top_real_complex}
    h_{\Top}(b_\Omega) \leq h_{\Top}(b). 
\end{equation}
Dinh-Sibony, building on work of Gromov, proved a fundamental inequality relating dynamical degrees and topological entropy $h_{\Top}$ \cite{MR2391122, MR2026895}. We just need the special case of their result for surfaces, as follows. If $g: X \vdash X$ is a rational $m$-to-$n$ correspondence of a \emph{irreducible} complex surface, then
\begin{equation} \label{eq_NS}
    h_{\Top} (g) \leq \log \max \{n, \lambda_1(g), m\}. 
\end{equation}
Then Theorem \ref{thm_main} gives
$$h_{\Top} (b_{\Omega}) \leq h_{\Top}(b) \leq \log \max \{d -1, \lambda_1(b) \} \leq \log \rho_d.$$
\end{proof}

\begin{remark}
In degree $2$, the above proof shows that $h_{\Top}(b) = 0$. A more refined invariant is then the (real) polynomial entropy, which is $1$ for circular tables and $2$ for noncircular elliptical tables \cite{MR3754521}.
\end{remark}

Now we prove Corollary \ref{cor_orbit_growth}.

\begin{proof}[Proof of Corollary \ref{cor_orbit_growth}]
Recall that $\Omega \subset \R^2$ is a region with boundary $\partial \Omega$ contained in a smooth algebraic curve $C$ of degree $d \geq 2$. We may take $T = \partial \Omega$ in Proposition \ref{prop_b_basic} (\ref{it_real_inside_complex}), so the graph $\Gamma_{b_\Omega}$ of the real billiard map inside $\Omega$ is a subset of $\Gamma_b$ on $C \times D$. If $b^m(c \times D)$ and $c' \times D$ do not share a component, then
$$a_m (c, c') \leq b^m(c \times D) \cdot (c' \times D).$$
By numerical equivalence, we have
\begin{align*}
    b^m(c \times D) \cdot (c' \times D) &= b^m (c \times D) \cdot (c \times D) \\
    &= \deg_{c \times D} (b^m).
\end{align*}
Then applying Theorem \ref{thm_main} and Theorem \ref{thm_dd_props} (\ref{it_not_ample}) gives, for any $\epsilon > 0$, that
$$\deg_{c \times D} (b^m) = O( (\lambda_1(b) + \epsilon)^m ) \leq O((\rho_d + \epsilon)^m).$$
\end{proof}

\section{Singularity Confinement} \label{sect_sc}
In this section, we prove Theorem \ref{thm_sing_conf}. We show that, for a general degree $d \geq 2$ curve $C$, the billiards correspondence $b: C \times D \vdash C \times D$ satisfies a singularity confinement property. Roughly speaking, we show that the contracted curves of $b$ are not contracted by $b^2$. In slogan form, the singularity confinement property is that ``what blows down must blow up.''  The word ``singularity'' here means a mapping singularity, that is, a contraction. ``Confinement'' refers to the finite duration of time that the contraction exists. This formalizes the idea behind Figure \ref{fig_sc}.

In the case of correspondences, there are some subtleties that prevent us from applying this heuristic definition. The image of a curve by a correspondence may have several components that we wish to deal with separately: some curves, some points, even a curve with a point inside it that should be counted separately. Thus it is most convenient to work with contractions and expansions rather than contracted curves and indeterminacy points.

Recall that a \emph{contraction} of a correspondence $f: X \vdash X$ is an irreducible curve $V \subset \Gamma_f$ such that the projection of $V$ to the second factor is a point (Definition \ref{def_contraction}). Recall as well that, for any composition $g \circ f$ of correspondences, there are natural rational maps $\Gamma_{g \circ f} \dashrightarrow \Gamma_f$ and $\Gamma_{g \circ f} \dashrightarrow \Gamma_g$. See Remark \ref{rem_composite_nat_arrows}.

\begin{defn}[Singularity confinement for surface correspondences] \label{def_sing_conf}
    Let $X$ be a surface. Let $f: X \vdash X$. For each iterate $f^m: X \vdash X$, $m \geq 2$, there is a commutative diagram
    \begin{center}
\begin{tikzcd}[row sep=small, column sep=small]
     &        & \Gamma_{f^m} \arrow[dl, dashed, "\alpha"] \arrow[ddll,bend right=40] \arrow[dr, dashed] \arrow[ddrr, bend left=40, "\beta"] &   & \\
     & \Gamma_f \arrow[dl] \arrow[dr] & & \Gamma_{f^{m-1}} \arrow[dl] \arrow[dr] & \\
X &    & X &     & X
\end{tikzcd}
\end{center}.

    Let $V \subset \Gamma_f$ be a contraction. We say $(f, V)$ has the \emph{singularity confinement property} if there exists an iterate $f^m$ such that, in the above diagram, the set $\beta(\alpha^{-1}(V \smallsetminus \Ind \alpha))$ is a curve. We say $f$ itself has the singularity confinement property if, for every contraction $V$, the pair $(f, V)$ has the singularity confinement property.
\end{defn}

\begin{lemma} \label{lemma_sc_crit}
    Suppose that $f:X \vdash X$ is a surface correspondence. The following criterion is sufficient for $f$ to have the singularity confinement property. There exists a birational morphism $\pi: P \to X$ such that: for each contraction $V = (V_0 \times p) \subset \Gamma_f$, there exist $m \geq 0$ and irreducible curves $V'_0, V'_1, \ldots, V'_m \subset P$ satisfying:
    \begin{itemize}
        \item $V'_0$ is the strict transform of $V_0$;
        \item The strict transform $\hat{f}:P\vdash P$ has $V'_i \cap \Ind \hat{f} = \emptyset$;
        \item For each $1 \leq i \leq m$, we have $\hat{f}(V_i) \supset V_{i+1}$;
        \item $\pi(V'_1) = p$;
        \item $\pi(V'_m)$ is $1$-dimensional.
    \end{itemize}
\end{lemma}

\begin{proof}
The strict transform of a curve $V \subset X$ by a rational correspondence $\pi:Y \vdash X$ is the Zariski closure of $\pi^{-1}(V \smallsetminus \Exc \pi)$ in $Y$. The strict transform can be computed after restricting $V$ to a Zariski dense subset, and where a map is regular, strict transform agrees with image and respects composition.

A birational morphism $\pi: P \to X$ induces a birational morphism $\Gamma_{\hat{f}} \to \Gamma_f$. The strict transform of $V_0 \times p$ to $\Gamma_{\hat{f}}$ contains $V'_0 \times E$, and by induction, the strict transform of $V_0 \times p$ to $\Gamma_{\hat{f}^m}$ contains $V'_0 \times V'_m$. This gives a curve $V_m = \pi(V'_m)$ in the strict transform of $V_0 \times p$ by $\beta \circ \alpha^{-1}$. 
\end{proof}

Before proceeding, we pause to note that the singularity confinement property is distinct from the phenomenon of algebraic instability, though the two concepts are closely related.

\begin{example}[Confined and unstable]
The map $f: \A^2 \dashrightarrow \A^2$ defined by $f(x, y) = (x, x/y)$ has singularity confinement. The only contracted curve of $f$ is the locus $V$ cut out by $x = 0$. The indeterminacy locus $\Ind f$ is the locus of points where $y = 0$, and $f(V \smallsetminus \Ind f) = \{(0,0)\}$. But $f^2$ is the identity, so $f^2(V \smallsetminus \Ind f^2) = V$.
\end{example}

\begin{example}[Unconfined and unstable] \label{ex_unstable_and_unconfined}
Let $k = \C$ and let $f(x, y) = (x^3, x/y^2)$. Let $V$ be the locus where $x = 0$. A surface map $f$ is algebraically unstable if and only if a contracted curve $V$ is eventually mapped into $\Ind f$ \cite[Theorem 1.14]{MR1867314}.  Then $f(V \smallsetminus \Ind f) = (0,0) \in \Ind f$, so $V$ witnesses the failure of algebraic stability for $f$. Yet $f^2(x, y) = (x^9, x y^4)$, so $f^2$ also contracts $V$ to a point. Thus we cannot detect singularity confinement for $(f, V)$ by just looking at the destabilizing orbit.
\end{example}

The main result of this section is as follows.

\begin{thm}[= Theorem \ref{thm_main_sc}] \label{thm_sing_conf}
    Let $C$ be a general plane curve of degree $d \geq 2$. Then the billiards correspondence $b: C \times D \vdash C \times D$ has the singularity confinement property.
\end{thm}

\begin{proof}
Suppose that $V \subset \Gamma_b$ is a contraction. Writing $b = r \circ s$, we see that the image of $V$ in $\Gamma_s$ or $\Gamma_r$ must be a contraction. Proposition \ref{prop_basic_s} and Proposition \ref{prop_basic_r} contain classifications of the contractions of $\Gamma_s$ and $\Gamma_r$. 

For $\Gamma_s$, the only contractions are associated to scratch points at infinity $p = (c, q)$. The contractions are of the form $(C \times q) \times p$.

For $\Gamma_r$, the only contractions are associated to isotropic scratch points $p = (c, q)$. The contractions are of the form $(c \times D) \times p$. We have $s^{-1}( (c \times D) \smallsetminus p) = s( (c \times D) \smallsetminus p)$ by self-adjointness of $s$. Thus the remaining contractions of $\Gamma_b$ are $s (c \times D) \times p$.

Given a contraction $V_0 \times p$ of the type $V_0 = C \times q$, we claim that in Lemma \ref{lemma_sc_crit}, we may take $m = 2$, $V'_1 = E_p \colonequals \pi^{-1}(p)$, $V'_2 = \hat{r}(V'_0)$. Indeed by Lemma \ref{lemma_secant_at_infty} and Lemma \ref{lemma_reflect_infty} we have:
\begin{itemize}
    \item $\hat{s}$ is regular on $V'_0$,
    \item $E_p \subset \hat{s}(V'_0)$,
    \item $\hat{r}$ is regular on $E_p$,
    \item $V_1 = E_p \subset \hat{r}(E_p)$,
    \item $\hat{s}$ is regular on $E_p$,
    \item $V'_0 \subset \hat{s}(E_p)$,
    \item $\hat{r}$ is regular on $V'_0$.
\end{itemize}
Since $\pi(\hat{r}(V'_0))$ is a curve, we are done.

Given a contraction $V_0 \times p$ of the type $V_0 = s(c \times D)$, we claim that in Lemma \ref{lemma_sc_crit}, we may take $m = 2$, $V'_1 = E_p$, $V'_2 = (c \times D)_{\str}$. By self-adjointness we know that the strict transform of $V'_0$ by $\hat{s}$ contains $(c \times D)_{\str}$. Then by Lemma \ref{lemma_secant_at_isotropic} and Lemma \ref{lemma_reflect_isotropic} we have:
\begin{itemize}
    \item $\hat{r}$ is regular on $(c \times D)_{\str}$,
    \item $V_1 = E_p \subset \hat{r}((c \times D)_{\str})$,
    \item $\hat{s}$ is regular on $E_p$,
    \item $E_p \subset \hat{s}(E_p)$,
    \item $\hat{r}$ is regular on $E_p$,
    \item $(c \times D)_{\str} \subset \hat{r}(E_p)$.
\end{itemize}
Since $\pi((c \times D)_{\str}) = (c \times D)$ is a curve, we are done.
\end{proof}

\bibliographystyle{plain}
\bibliography{bib}
\end{document}